\documentclass[reqno]{amsart}
\usepackage{amsmath,amsfonts,amsthm,amssymb}
\usepackage{enumitem}

\usepackage{todonotes}

\usepackage[normalem]{ulem}

\usepackage{bbm}
\usepackage{hyperref}

\parindent1em
\sloppy
\pagestyle{plain}

\allowdisplaybreaks   

%\textwidth=16cm
%\textheight=24cm
%\oddsidemargin=0cm
%\evensidemargin=0cm

%% Theorem  etc.
\theoremstyle{plain}
\newtheorem{de}{Definition}[section]
\newtheorem{lem}[de]{Lemma}
\newtheorem{prop}[de]{Proposition}
\newtheorem{cor}[de]{Corollary}
\newtheorem{thm}[de]{Theorem}

\theoremstyle{definition}
\newtheorem{rem}[de]{Remark}

\numberwithin{equation}{section}

%% Mathematical symbols

\newcommand{\eul}{e}
\newcommand{\imu}{\mathrm{i}}
\newcommand{\dd}{\,\mathrm{d}}
\renewcommand{\epsilon}{\varepsilon}

\renewcommand{\Im}{\operatorname{Im}}
\renewcommand{\Re}{\operatorname{Re}}
\newcommand{\supp}{\operatorname{supp}}

\newcommand{\R}{{\mathbb{R}}}
\newcommand{\N}{{\mathbb{N}}}

\newcommand{\Z}{{\mathbb{Z}}}

\newcommand{\PP}{{\mathbb{P}}}

\newcommand{\cL}{{\mathcal{L}}}
\newcommand{\cF}{{\mathcal{F}}}
\newcommand{\cA}{{\mathcal{A}}}

\newcommand{\cI}{{\mathcal{I}}}

\newcommand{\om}{\omega}

\newcommand{\ph}{\varphi}

\begin{document}
 \title[On the almost sure scattering for the 4D cubic NLW]{On the almost sure scattering for the energy-critical cubic wave equation with supercritical data}
 
 \author[M.~Spitz]{Martin Spitz}
 \address[M.~Spitz]{Fakult\"at f\"ur Mathematik, Universit\"at Bielefeld, Postfach 10 01 31, 33501
  Bielefeld, Germany}
\email{mspitz@math.uni-bielefeld.de}
\keywords{Nonlinear wave equation, random initial data, almost sure scattering}

\begin{abstract}
	In this article we study the defocusing energy-critical nonlinear wave equation on $\R^4$ with scaling supercritical data. We prove almost sure scattering for randomized initial data in $H^s(\R^4) \times H^{s-1}(\R^4)$ with $\frac{5}{6} < s < 1$. The proof relies on new probabilistic estimates for the linear flow of the wave equation with randomized data, where the randomization is based on a unit-scale decomposition in frequency space, a decomposition in the angular variable, and a unit-scale decomposition of physical space. In particular, we show that the solution to the linear wave equation with randomized data almost surely belongs to $L^1_t L^\infty_x$.
\end{abstract}

\maketitle

\section{Introduction and main result}
\label{sec:Introduction}

We consider the cubic defocusing nonlinear wave equation in four space dimensions
\begin{equation}
	\label{eq:CubicNLW}
	\begin{aligned}
		-\partial_t^2 u + \Delta u &= u^3 \hspace{4em} \text{on } \R \times \R^4, \\
			(u(0), \partial_t u(0)) &= (f_0, f_1)
	\end{aligned}
\end{equation}
with initial data $(f_0, f_1) \in H^s(\R^4) \times H^{s-1}(\R^4)$. The energy
\begin{equation}
	\label{eq:DefEnergy}
	E(u(t)) = \int_{\R^4} \frac{1}{2} |\nabla u(t)|^2 + \frac{1}{2} |\partial_t u(t)|^2 + \frac{1}{4} u^4(t) \dd x
\end{equation}
is conserved for sufficiently regular solutions of~\eqref{eq:CubicNLW}. Since both the energy and equation~\eqref{eq:CubicNLW} are invariant under the scaling
\begin{equation*}
	u(t,x) \mapsto \lambda u(\lambda t, \lambda x),
\end{equation*}
problem~\eqref{eq:CubicNLW} is called energy-critical. Moreover, the corresponding scaling critical regularity is $s_c = 1$ and we refer to $\dot{H}^1(\R^4) \times L^2(\R^4)$ as the energy space. The term defocusing refers to the plus sign in front of the nonlinearity in~\eqref{eq:CubicNLW}.

The purpose of this article is to study the long-time behavior of solutions of~\eqref{eq:CubicNLW} below the scaling-critical regularity. We prove that for initial data $(f_0, f_1) \in H^s(\R^4) \times H^{s-1}(\R^4)$ with $\frac{5}{6} < s < 1$ which are randomized with respect to a decomposition in frequency space, in the angular variable, and in physical space, the solutions of~\eqref{eq:CubicNLW} almost surely scatter.

The energy-critical nonlinear wave equation (NLW) has been subject of extensive research. Local wellposedness of~\eqref{eq:CubicNLW} was shown in ~\cite{LS1995} for subcritical and critical regularities $s \geq 1$. The global theory, in particular global wellposedness, scattering, and control of scattering norms by the energy, was developed in a series of works by several authors, see~\cite{Str1988, Gr1990, GSV1992, SS1993, SS1994,BG1998, BG1999, Na1999, Tao2006} and the references therein. They have particularly shown that for any $(u_0, u_1) \in \dot{H}^1(\R^4) \times L^2(\R^4)$, equation~\eqref{eq:CubicNLW} with initial data $(u_0, u_1)$ has a unique global solution
\begin{align*}
	(u, \partial_t u) \in (C(\R, \dot{H}^1(\R^4)) \cap L^3(\R, L^6(\R^4))) \times C(\R, L^2(\R^4)).
\end{align*}
Moreover, there is a non-decreasing function $K \colon [0,\infty) \rightarrow [0,\infty)$ such that
\begin{align*}
	\|u\|_{L^3_t L^6_x} \leq K(E(u_0, u_1)), 
\end{align*}
where
\begin{align*}
	E(u_0, u_1) = \int_{\R^4} \frac{1}{2} |\nabla u_0|^2 + \frac{1}{2} u_1^2 + \frac{1}{4} u_0^4 \dd x.
\end{align*}
Consequently, the unique global solution of~\eqref{eq:CubicNLW} scatters both forward and backward in time.

Below the scaling-critical regularity, i.e. for $s < 1$, problem~\eqref{eq:CubicNLW} is known to be ill-posed, see~\cite{CCT2003}. However, ill-behaved solutions of~\eqref{eq:CubicNLW} in the scaling-supercritical regime might rather be exceptional, which leads to the question if there still are large sets of generic initial data for which unique local, global or scattering solutions exist. We study this question using randomization.

%%%%%%%%%%%%%%%%%%%%%%%%%%%%%%%%%%%%%%%%%%%%%%%%%%%%%%%%%%%%%%%%%%%%%%%%%%%%%%%%%%%%%%%%%%%%%%%%%%%%%%%%%
%%%%%%%%%%%%%%%%%%%%%%%%%%%%%%%%%%%%%%%%%%%%%%%%%%%%%%%%%%%%%%%%%%%%%%%%%%%%%%%%%%%%%%%%%%%%%%%%%%%%%%%%%
%%%%%%%%%%%%%%%%%%%%%%%%%%%%%%%%%%%%%%%%%%%%%%%%%%%%%%%%%%%%%%%%%%%%%%%%%%%%%%%%%%%%%%%%%%%%%%%%%%%%%%%%%

\subsection{Randomization}
\label{subsec:Randomization}

Random dispersive partial differential equations have attracted a lot of interest in recent years emanating from the seminal works~\cite{B1994, B1996} and~\cite{BT2008I, BT2008II}. A large body of literature has developed by now and we only review the results which are most relevant for this work.

Almost sure global existence for the defocusing energy-critical nonlinear wave equation with initial data $(f_0, f_1) \in H^s(\R^d) \times H^{s-1}(\R^d)$ was proven in~\cite{P2017} for $s > 0$ in dimension $d = 4$, for $s \geq 0$ in dimension $d = 5$, and in~\cite{OP2016} for $s > \frac{1}{2}$ in the case $d = 3$. Concerning the long-time behavior, the first almost sure scattering result for a scaling-supercritical dispersive equation was established in~\cite{DLM2020} for the energy-critical nonlinear wave equation in four space dimensions for $\frac{1}{2} < s < 1$ and initial data $(f_0, f_1)$ which is radially symmetric before the randomization. This result was then improved to $0 < s < 1$ in~\cite{DLM2019}, still for randomized radially symmetric initial data. In dimension $d = 3$ almost sure scattering for the energy-critical NLW with randomized radially symmetric initial data was shown in~\cite{Br2020}. We note that in a parallel line of research, similar results were also obtained for the energy-critical respectively cubic nonlinear Schr{\"o}dinger equation, see~\cite{BOP2015I, BOP2015II, B2019, OOP2019, KMV2019, DLM2019, SSchr2021, C2021, SSW2021, SSW2021II}.

The first almost sure scattering result without the radial symmetry assumption was proven in~\cite{Br2021} for the energy-critical NLW in dimension $d = 4$, i.e.~\eqref{eq:CubicNLW}, for $\frac{11}{12} < s < 1$. We note that in~\cite{Br2021} a different randomization than in~\cite{DLM2020, DLM2019} for radially symmetric data was used. In fact, most of the aforementioned results (except~\cite{Br2020, SSchr2021, SSW2021II}) relied on the so-called Wiener randomization, which is based on a unit-scale decomposition of frequency space (see Subsection~\ref{subsec:UnitScaleFrequencySpace} below for details).

However, there are several randomization procedures, each having its own advantages. In~\cite{BT2008I, BT2008II}, where a nonlinear wave equation was considered on compact manifolds, the randomization was based on a decomposition in an orthonormal basis of eigenfunctions of the Laplacian. Similar approaches were then also used on $\R^d$ for problems which could be transformed into a setting where an orthonormal basis of eigenfunctions of the Laplacian exists, see e.g.~\cite{dS2013, D2012, PRT2014} and the references therein.

The Wiener randomization was then introduced in~\cite{BOP2015I, BOP2015II} and~\cite{LM2014} on the Euclidean space. It provides a unit-scale Bernstein inequality which improves the range of Strichartz estimates from the deterministic setting. Rather recently, a randomization with respect to a unit-scale decomposition of physical space was proposed in~\cite{M2019} (see Subsection~\ref{subsec:PhysicalSpaceRand} below) in the context of the final-state problem of the mass-subcritical NLS in $L^2$ , see also~\cite{NY2019}. Roughly speaking, this physical-space randomization gives access to the dispersive estimate although the data only belongs to $L^2$. Another randomization with respect to a certain decomposition in the angular variable (see Subsection~\ref{subsec:AngularDecompRand} below for details) was introduced in~\cite{BK2021}. With this randomization (almost) the same range of Strichartz estimates as for radial data is available.

In~\cite{Br2021} the Wiener randomization was combined with the physical-space randomization to a microlocal randomization based on a unit-scale decomposition both in frequency and physical space. Combinations of different randomization procedures were also employed in~\cite{BK2021}, where the angular decomposition was merged with the Wiener randomization and a randomization in the radial variable to prove almost sure global wellposedness for a wave-maps type nonlinear wave equation. Recently, the author used a randomization with respect to the unit-scale decomposition of frequency space, the decomposition in the angular variable, and the unit-scale decomposition of physical space to prove almost sure scattering for the energy-critical cubic Schr{\"o}dinger equation without the radial symmetry assumption for the initial data in~\cite{SSchr2021}.

In this work we show that using the same randomization, i.e. adding the decomposition in the angular variable to the microlocal randomization from~\cite{Br2021}, we can lower the regularity threshold for almost sure scattering of~\eqref{eq:CubicNLW} from $\frac{11}{12} < s < 1$ in~\cite{Br2021} to $\frac{5}{6} < s < 1$. The proof heavily relies on the improved probabilistic estimates for the linear flow of the wave equation as we provide an $L^1_t L^\infty_x$-estimate for linear solutions with randomized data. We discuss the effects of this randomization in more detail after the statement of the main result in Subsection~\ref{subsec:MainResult} below. However, we first provide the precise definition of the randomization. In order to facilitate further applications of this randomization, we present it in general dimension $d \geq 2$ and only specialize to $d = 4$ when we prove the almost sure scattering result for~\eqref{eq:CubicNLW}.

\subsection{Unit scale decomposition in frequency space}
\label{subsec:UnitScaleFrequencySpace}

Let $\phi \in C_c^\infty(\R^d)$ be non-negative and even with $\phi(x) = 1$ if $x \in [-\frac{2}{3},\frac{2}{3}]^d$ and $\phi(x) = 0$ if $x \notin [-\frac{3}{4}, \frac{3}{4}]^d$. Setting
\begin{equation}
\label{eq:DefPsij}
	\psi_j(\xi) = \frac{\phi(\xi - j)}{\sum_{m \in \Z^d} \phi(\xi - m)}
\end{equation}
for all $\xi \in \R^d$ and $j \in \Z^d$, we obtain a smooth partition of unity $\{\psi_j \colon j \in \Z^d\}$. We define the operators $P_j$ by
\begin{equation}
	\label{eq:DefPj}
	P_j f = \cF^{-1}(\psi_j \hat{f})
\end{equation}
for all $j \in \Z^d$, where $\cF f = \hat{f}$ denotes the Fourier transform of $f$. We thus get the unit scale decomposition in frequency space
\begin{equation}
	\label{eq:DecompositionPj}
	f = \sum_{j \in \Z^d} P_j f
\end{equation}
for all $f \in H^s(\R^d)$. Randomizing with respect to this decomposition leads to the Wiener randomization, see e.g.~\cite{BOP2015I, BOP2015II, LM2014}. We further note that
\begin{equation}
	\label{eq:SymPsij}
	\psi_j(-\xi) = \psi_{-j}(\xi)
\end{equation}
for all $\xi \in \R^d$ and $j \in \Z^d$ since $\phi$ is even.

\subsection{Unit scale decomposition in physical space}
\label{subsec:PhysicalSpaceRand}

We use the same partition of unity as above in order to decompose a function in physical space. However, to ease the distinction between the decomposition in physical and in frequency space, we employ a different notation. We set
\begin{equation}
	\label{eq:DefPhysicalSpacePartition}
	\ph_i = \psi_i
\end{equation}
for all $i \in \Z^d$, leading to the unit scale decomposition in physical space
\begin{align*}
	f = \sum_{i \in \Z^d} \ph_i f
\end{align*}
for all $f \in H^s(\R^d)$. Randomizing with respect to this decomposition yields the physical-space randomization, see~\cite{M2019,NY2019,S2020}.

\subsection{Decomposition in the angular variable with respect to a good frame}
\label{subsec:AngularDecompRand}
Finally, we introduce the decomposition in the angular variable, closely following~\cite{BK2021,S2020}. We begin by recalling that the eigenfunctions of the Laplacian on the sphere are the spherical harmonics, i.e. the restriction to $S^{d-1}$ of the homogeneous harmonic polynomials. The space $E_k$ of spherical harmonics of degree $k$ has dimension
\begin{align*}
	N_k = \binom{d + k - 1}{k} - \binom{d + k - 3}{k - 2}
\end{align*}
for every $k \in \N_0$. We fix an orthonormal basis
\begin{align*}
		\{b_{k,l} \in L^2(S^{d-1}) \colon l \in \{1, \ldots, N_k\}, k \in \N_0\}
	\end{align*}
	of $L^2(S^{d-1})$, consisting of eigenfunctions of $\Delta_{S^{d-1}}$, such that there exists a constant $C > 0$ with
\begin{equation}
		\label{eq:BoundednessGoodFrame}
		\|b_{k,l}\|_{L^q(S^{d-1})} \leq \begin{cases} 
										C \sqrt{q} \qquad &\text{if } q < \infty, \\
										C \sqrt{\log(k)} &\text{if } q = \infty
									\end{cases}
	\end{equation}		
	for all $l \in \{1, \ldots, N_k\}$, $k \in \N$, and $q \in [2,\infty]$. The existence of such a frame follows from Th{\'e}or{\`e}me~6 and Proposition~3.2 in~\cite{BL2013}, see also~\cite[Theorem~1.1]{BK2021} and~\cite{BL2014}. We call an orthonormal basis $\{b_{k,l} \colon l \in \{1, \ldots, N_k\}, k \in \N_0\}$ as above a \emph{good frame}. Given a function $f \in H^s(\R^d)$, we can now develop every Littlewood-Paley block $P_M f$ in the good frame, which yields a decomposition of $f$. The randomization with respect to this decomposition was introduced in~\cite{BK2021}, see also~\cite{S2020}.
		
\subsection{Definition of the randomization}	
	Next let $s \in \R$ and take $f \in H^s(\R^d)$. For our randomization we combine the three decompositions from above, where we first apply the unit-scale decomposition in physical space, then the decomposition in the angular variable with respect to the good frame, and finally the unit-scale decomposition in frequency space. To that purpose, we first rescale the Littlewood-Paley blocks of $\ph_i f$ to unit frequency by setting
	\begin{equation}
		\label{eq:DefgM}
		g^M_i = (P_M (\ph_i f))(M^{-1} \cdot)
	\end{equation}
	for all $i \in \Z^d$ and $M \in 2^\Z$, where $P_M$ denotes the standard Littlewood-Paley operator introduced in Section~\ref{sec:NotPrelim} below. After transition to polar coordinates, we expand the Fourier transform of $g^M_i$ in the good frame, yielding
	\begin{equation}
		\label{eq:hatgMiInGoodFrame}
		\hat{g}^M_i(\rho \theta) = \sum_{k = 0}^\infty \sum_{l = 1}^{N_k} \hat{c}^{M,i}_{k,l}(\rho) b_{k,l}(\theta),
	\end{equation}
	where all coefficients $\hat{c}^{M,i}_{k,l}$ are supported in $(\frac{1}{2},2)$. Theorem~3.10 from~\cite{SW71} thus gives the representation
	\begin{equation}
		\label{eq:RepresentationgMi}
		g^M_i(r \theta) = \sum_{k = 0}^\infty \sum_{l = 1}^{N_k} a_k r^{-\frac{d-2}{2}} b_{k,l}(\theta) \int_0^\infty \hat{c}^{M,i}_{k,l}(\rho) J_{\frac{d+2k-2}{2}}(r \rho) \rho^{\frac{d}{2}} \dd \rho,
	\end{equation}
	where $a_k = \imu^k (2\pi)^{-\frac{d}{2}}$ and $J_\mu$ denotes the Bessel function
	\begin{align*}
 		J_\mu(t) = \frac{(\frac{t}{2})^\mu}{\Gamma(\frac{2\mu+1}{2}) \Gamma(\frac{1}{2})}\int_{-1}^1 \eul^{\imu t s} (1-s^2)^{\frac{2\mu-1}{2}} \dd s
 	\end{align*}
 	for all $t > 0$ and $\mu > -\frac{1}{2}$. By Plancherel's theorem and~\eqref{eq:hatgMiInGoodFrame} we also get
	\begin{equation}
		\label{eq:L2SummabilityAngDec}
		\|g^M_i\|_{L^2_x}^2 \sim \sum_{k = 0}^\infty \sum_{l = 1}^{N_k} \| \hat{c}^{M,i}_{k,l}\|_{L^2(\rho^{d-1} \dd \rho)}^2.
	\end{equation}
	Scaling back to frequency $M$, we arrive at
\begin{equation}
	\label{eq:DecompSphHarm}
	P_M(\ph_i f)(r \theta) = \sum_{k = 0}^\infty \sum_{l = 1}^{N_k} a_k M^{-\frac{d-2}{2}} r^{-\frac{d-2}{2}} b_{k,l}(\theta) \int_0^\infty \hat{c}^{M,i}_{k,l}(\rho) J_{\frac{d+2k-2}{2}}(M r \rho) \rho^{\frac{d}{2}} \dd \rho.
\end{equation}
Recalling that $(\ph_i)_{i \in \Z^d}$ is a partition of unity and applying the unit-scale decomposition in frequency space, we thus obtain the decomposition
\begin{equation}
\label{eq:DecPMf}
	P_M f = \sum_{i,j \in \Z^d} \sum_{k = 0}^\infty \sum_{l = 1}^{N_k} a_k M^{-\frac{d-2}{2}} P_j \Big[r^{-\frac{d-2}{2}} b_{k,l}(\theta) \int_0^\infty \hat{c}^{M,i}_{k,l}(\rho) J_{\frac{d+2k-2}{2}}(M r \rho) \rho^{\frac{d}{2}} \dd \rho \Big].
\end{equation}

We now introduce the randomization with respect to the above decomposition. Fix an index set $\cI$ such that $\Z^d = \cI \dot\cup \{0\} \dot\cup (-\cI)$. For each $M \in 2^\Z$ we pick a sequence $(X^M_{i,j,k,l})_{i \in \Z^d, j \in \cI \cup \{0\}, l \in \{1, \ldots, N_k\}, k \in \N_0}$ of complex-valued random variables on a probability space $(\Omega, \cA, \PP)$ such that 
\begin{align*}
	(X^M_{i,0,k,l}, \Re(X^M_{i,j,k,l}), \Im(X^M_{i,j,k,l}))_{i \in \Z^d, j \in \cI, l \in \{1, \ldots, N_k\}, k \in \N_0}
\end{align*}
are independent, real-valued, mean-zero random variables and that there is a constant $c > 0$ satisfying
\begin{align*}
	\int_{\R} e^{\gamma x} \dd \mu^{M,n}_{i,j,k,l}(x) \leq e^{c \gamma^2}
\end{align*}
for all $\gamma \in \R$, $l \in \{1, \ldots, N_k\}$, $k \in \N_0$, $i \in \Z^d$,  $j \in \cI \cup \{0\}$, and $n \in \{1,2\}$, where $\mu^{M,1}_{i,j,k,l}$ denotes the distribution of $\Re(X^M_{i,j,k,l})$ and $\mu^{M,2}_{i,j,k,l}$ the distribution of $\Im(X^M_{i,j,k,l})$. We then set $X_{i,-j,k,l}^M = \overline{X}_{i,j,k,l}^M$ for all $j \in \cI$. We note that mean-zero Gaussian random variables with uniformly bounded variances, standard Bernoulli variables, and mean-zero random variables with compactly supported distributions satisfy this assumption.

In view of the decomposition~\eqref{eq:DecPMf}, we then define the \emph{randomization} of $f$ as
\begin{align}
	\label{eq:DefRandomization}
	f^\om &= \sum_{M \in 2^\Z} \sum_{i,j \in \Z^d} \sum_{k = 0}^\infty \sum_{l = 1}^{N_k} X^M_{i,j,k,l}(\om) a_k M^{-\frac{d-2}{2}} \nonumber\\
	&\hspace{10em} \cdot P_j \Big[r^{-\frac{d-2}{2}} b_{k,l}(\theta) \int_0^\infty \hat{c}^{M,i}_{k,l}(\rho) J_{\frac{d+2k-2}{2}}(M r \rho) \rho^{\frac{d}{2}} \dd \rho \Big],
\end{align}
which is understood as the limit in $L^2(\Omega, H^s(\R^d))$.

\begin{rem}
	\begin{enumerate}
		\item We point out that this randomization does not regularize in the sense of higher Sobolev regularity in general. In fact, assume that $(\Omega, \cA, \PP)$ is the product space of $(\Omega_i, \cA_i, \PP_i)$, $i = 1,2,3$, and that $X^M_{i,j,k,l}(\om) = X_j(\om_1) X_{k,l}^M(\om_2) X_i(\om_3)$ for all $\om = (\om_1, \om_2, \om_3) \in \Omega$, where $(X_j)_j$, $(X^M_{k,l})_{M,k,l}$, and $(X_i)_i$ are independent and identically distributed real-valued Gaussian random variables on $\Omega_1$, $\Omega_2$, and $\Omega_3$, respectively. Employing adaptions of the proof of~\cite[Lemma~B.1]{BT2008I} successively in $\omega_3$, $\omega_2$, and $\omega_1$, one can show that for $f \in H^s(\R^d)$ which does not belong to $H^{s'}(\R^d)$, $f^\om$ is not an element of $H^{s'}(\R^d)$ almost surely for every $s' > s$.
		\item The introduction of the index set $\cI$ has the effect that the randomization $f^\om$ of a real-valued function $f$ is again real-valued. In fact, assume that $f$ is real-valued. Property~\eqref{eq:SymPsij} of the partition $(\psi_j)_{j \in \Z^d}$ implies that
		\begin{align*}
			\overline{P_j f} = P_{-j} f
		\end{align*}
		for all $j \in \Z^d$. If $(c_j)_{j \in \Z^d}$ is a complex-valued sequence with $c_{-j} = \overline{c}_j$ for all $j \in \cI \cup \{0\}$, we thus obtain that
		\begin{align*}
			\sum_{j \in \Z^d} c_j P_j f = c_0 P_0 f + \sum_{j \in \cI} (c_j P_j f + \overline{c_j P_j f}) 
			= c_0 P_0 f + \sum_{j \in \cI} 2 \Re(c_j P_j f)
		\end{align*}
		is real-valued. Since $P_M(\ph_i f)$ is real valued for all $i \in \Z^d$ and $M \in 2^\Z$, each summand on the right-hand side of~\eqref{eq:DecompSphHarm} is real-valued and hence $f^\om$ is real-valued for real-valued $f$.
	\end{enumerate}
\end{rem}

\subsection{Main result}
\label{subsec:MainResult}

We set
\begin{equation}
	\label{eq:DefS}
	S(t)(f_0, f_1) = \cos(t |\nabla|) f_0 + \frac{\sin(t |\nabla|)}{|\nabla|} f_1
\end{equation}
for the solution of the linear wave equation with initial data $(f_0, f_1)$. The main result of this work shows almost sure scattering for solutions of~\eqref{eq:CubicNLW} with randomized initial data from $H^s(\R^4) \times H^{s-1}(\R^4)$ with $\frac{5}{6} < s < 1$.

\begin{thm}
	\label{thm:AlmostSureScattering}
	Let $\frac{5}{6} < s < 1$, $(f_0, f_1) \in H^s(\R^4) \times H^{s-1}(\R^4)$, and $(f_0^\om, f_1^\om)$ be the randomization from~\eqref{eq:DefRandomization}. Then for almost every $\omega \in \Omega$ there exists a unique global solution 
	\begin{align*}
		(u, \partial_t u) \in (S(t)(f_0^\om, f_1^\om), \partial_t S(t)(f_0^\om, f_1^\om)) + C(\R, \dot{H}^1(\R^4) \times L^2(\R^4))
	\end{align*}
	of the energy-critical cubic nonlinear wave equation
	\begin{equation}
	\label{eq:CubicNLWRand}
	\begin{aligned}
		-\partial_t^2 u + \Delta u &= u^3 \hspace{4em} \text{on } \R \times \R^4, \\
			(u(0), \partial_t u(0)) &= (f_0^\om, f_1^\om),
	\end{aligned}
\end{equation}
	 which scatters both forward and backward in time, i.e. there exist $(v_0^{\pm}, v_1^{\pm}) \in \dot{H}^{1}(\R^4) \times L^2(\R^4)$ such that 
	\begin{align*}
		\lim_{t \rightarrow \pm \infty}\| (u(t), \partial_t u(t)) - (u_{\mathrm{lin},\pm}^\om(t),\partial_t u_{\mathrm{lin},\pm}^\om(t)) \|_{\dot{H}^1 \times L^2} = 0,
	\end{align*}
	where
	\begin{align*}
		u_{\mathrm{lin},\pm}^\om(t) = S(t)(f_0^\om + v_0^\pm, f_1^\om + v_1^\pm).
	\end{align*}
\end{thm}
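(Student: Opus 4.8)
The plan is to follow the by-now-standard strategy for almost sure scattering below the critical regularity: isolate the free evolution $F^\om := S(t)(f_0^\om, f_1^\om)$, write the solution as $u = F^\om + v$, and treat the equation for the nonlinear remainder $v$ — namely $-\partial_t^2 v + \Delta v = (F^\om + v)^3$ with zero data — by a purely deterministic argument on a full-probability event on which a finite list of spacetime norms of $F^\om$ is finite. The first task is probabilistic: to show that, almost surely,
\begin{equation*}
  \|F^\om\|_{L^1_t L^\infty_x} + \|F^\om\|_{L^2_t L^8_x} + \|F^\om\|_{L^3_t L^6_x} + \bigl\||\nabla|^{\ep} F^\om\bigr\|_{Y} < \infty
\end{equation*}
for a suitable Strichartz space $Y$ and a small $\ep>0$ (the precise list is dictated by the energy estimate and the perturbation step below). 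As usual, each of these is handled by bounding its $L^p(\Omega)$-norm for all large finite $p$: Minkowski's inequality, the exponential-moment hypothesis on the $X^M_{i,j,k,l}$, and Khintchine's inequality reduce matters to the square function of the pieces appearing in the decomposition \eqref{eq:DecPMf}, which lies at the (critical) $\dot H^1$-level although $f_j\in H^s$ with $s<1$; one then concludes a.s.\ finiteness from Chebyshev's inequality, using that a random variable with polynomially-growing moments of all orders is a.s.\ finite.

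The heart of the probabilistic part, and the main obstacle, is the $L^1_t L^\infty_x$-bound. Here one must exploit all three components of the randomization at once: the unit-scale frequency decomposition supplies a unit-scale Bernstein inequality; the physical-space localization $\ph_i f$ gives access to the dispersive decay $t^{-3/2}$ for each piece; and the good-frame expansion \eqref{eq:RepresentationgMi} converts each angular mode into a radial profile whose transform is governed by the Bessel kernel $\int_0^\infty \hat c^{M,i}_{k,l}(\rho)\, J_{(d+2k-2)/2}(Mr\rho)\,\rho^{d/2}\dd\rho$. Using $\supp \hat c^{M,i}_{k,l}\subset(\tfrac12,2)$, the large-argument asymptotics of $J_\mu$, and the small-argument bound $J_\mu(t)\lesssim (t/\mu)^{\mu}$, one gets decay in $r$ of this kernel uniformly in $k$ up to the range $r\lesssim k$, which is exactly what mimics radial data. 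Summing the angular series costs $N_k\sim k^{d-2}$ together with the logarithmic loss $\|b_{k,l}\|_{L^\infty(S^{d-1})}\lesssim\sqrt{\log k}$ from \eqref{eq:BoundednessGoodFrame}, and summing over $M\in 2^\Z$ and $i\in\Z^d$ costs further factors that close precisely when $s>\tfrac56$ in $d=4$; this is where the improvement over the threshold $\tfrac{11}{12}$ of~\cite{Br2021} originates. Carrying the $\sqrt p$ from Khintchine through and optimizing in $p$ then yields a finite bound on $\|F^\om\|_{L^p(\Omega)\,L^1_t L^\infty_x}$ for every finite $p$ whenever $s>\tfrac56$.

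On the full-probability event just produced, the argument is deterministic. The key a priori estimate is on the energy $E(v(t)) = \int \tfrac12|\nabla v|^2 + \tfrac12|\partial_t v|^2 + \tfrac14 v^4\dd x$: differentiating along the equation gives
\begin{equation*}
  \tfrac{\dd}{\dd t}E(v) = -\int \bigl(3 v^2 F^\om + 3 v (F^\om)^2 + (F^\om)^3\bigr)\,\partial_t v\dd x,
\end{equation*}
and Hölder's inequality together with $\|v\|_{L^4_x}^2 + \|\partial_t v\|_{L^2_x}^2\lesssim E(v)$ and $\dot H^1(\R^4)\hookrightarrow L^4(\R^4)$ bounds the right-hand side by $C\,E(v)\,\|F^\om\|_{L^\infty_x} + C\,E(v)^{3/4}\,\|F^\om\|_{L^8_x}^2 + C\,E(v)^{1/2}\,\|F^\om\|_{L^6_x}^3$. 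Since $\|F^\om\|_{L^1_t L^\infty_x}$, $\|F^\om\|_{L^2_t L^8_x}$, $\|F^\om\|_{L^3_t L^6_x}$ are finite, Gronwall's inequality yields $\sup_{t\in\R}E(v(t))<\infty$. (The first term is exactly the one handled by the new $L^1_t L^\infty_x$-estimate; without it one would have to integrate by parts and pay derivatives on $F^\om$, which forces a worse threshold as in~\cite{Br2021}.) One then upgrades this to a global scattering-norm bound: local well-posedness of the $v$-equation in $C(\dot H^1\times L^2)$ follows from a contraction in a Strichartz space, and on any subinterval where the listed norms of $F^\om$ are small the energy bound controls $\|v\|_{L^3_t L^6_x}$ through the deterministic energy-critical theory (the scattering function $K$) and the stability/perturbation lemma for \eqref{eq:CubicNLW}; partitioning $\R$ into finitely many such subintervals gives $\|v\|_{L^3(\R,L^6_x)}<\infty$ and uniqueness in this class. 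Finally, with $\sup_t E(v)<\infty$ and $\|v\|_{L^3(\R,L^6_x)}<\infty$, the nonlinearity $(F^\om+v)^3$ lies in a dual Strichartz space on all of $\R$, so the Duhamel integral $\int_0^t \tfrac{\sin((t-\tau)|\nabla|)}{|\nabla|}(F^\om+v)^3(\tau)\dd\tau$ converges in $\dot H^1\times L^2$ as $t\to\pm\infty$; calling the limits $(v_0^\pm,v_1^\pm)$ and observing that $u - u^\om_{\mathrm{lin},\pm} = v - S(t)(v_0^\pm,v_1^\pm)\to 0$ in $\dot H^1\times L^2$ (with $u^\om_{\mathrm{lin},\pm}(t) = S(t)(f_0^\om+v_0^\pm, f_1^\om+v_1^\pm)$ by linearity of $S(t)$) gives the asserted scattering.
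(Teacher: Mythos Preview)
Your overall architecture matches the paper's: split off the free evolution, prove that on a full-probability event the forcing lies in $L^3_tL^6_x\cap L^1_tL^\infty_x$, run the energy identity for $v$, and close via the deterministic conditional-scattering machinery (the paper's Lemma~\ref{lem:NLWLocWP}, Proposition~\ref{prop:CondScattering}, Theorem~\ref{thm:ScatteringForcedEquation}). Two minor deviations are harmless: the paper only peels off the \emph{high-frequency} free evolution $S(t)(P_{>8}f_0^\om,P_{>8}f_1^\om)$ so that $(v(0),\partial_t v(0))=(P_{\le8}f_0^\om,P_{\le8}f_1^\om)\in\dot H^1\times L^2$, whereas you take zero data for $v$ and put all frequencies into $F^\om$ (your version then requires a separate low-frequency argument for $L^1_tL^\infty_x$, which the paper avoids); and your extra $L^2_tL^8_x$ norm is not needed, since $|vF^2|\le\tfrac12|v|^2|F|+\tfrac12|F|^3$ already reduces the energy increment to $\|F\|_{L^1_tL^\infty_x}$ and $\|F\|_{L^3_tL^6_x}$ alone.

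The substantive gap is in your route to the $L^1_tL^\infty_x$ bound. You propose to use the standard dispersive decay $t^{-3/2}$ on each physically localized piece, estimate each angular mode via large/small-argument Bessel asymptotics, and then sum, paying $N_k\sim k^{d-2}$ and the logarithmic $L^\infty(S^{d-1})$ loss from~\eqref{eq:BoundednessGoodFrame}. This does not close: the plain dispersive estimate for the wave equation costs $\tfrac{d+1}{2}=\tfrac52$ derivatives, far too many for $s<1$; and summing $\sum_k N_k$ against merely $l^2_{k,l}$ control of the coefficients $\hat c^{M,i}_{k,l}$ diverges. The paper circumvents both issues. First, it replaces the dispersive estimate by a Klainerman--Tataru-type variant (Lemma~\ref{lem:ImprovedDispersiveEstimate}) for data localized to \emph{thin annuli} of width $O(1)$ at radius $\lambda$, valid only for $|t|\gtrsim\lambda$; this buys back the crucial $\lambda^{(d-1)/2}$ factor and brings the derivative loss down to essentially one. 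Second, and this is the key compatibility point, the annulus localizer $Q_{1,\lambda}$ has a \emph{radial} symbol, so it commutes with the good-frame expansion; the angular pieces are therefore resummed via Parseval (\,$\|d^{M,i,\lambda}_{k,l}\|_{l^2_{k,l}}=\|e^{\imu t|\nabla|}P_{>M_0}Q_{1,\lambda}P_M(\ph_i f)(r\cdot)\|_{L^2_\theta}$\,) \emph{before} the dispersive estimate is applied, so no $N_k$ factor ever appears. This yields Proposition~\ref{prop:ProbabilisticEstimateDecay}. The final $L^1_tL^\infty_x$ bound (Proposition~\ref{prop:ProbabilisticL1LinftyEstiamte}) is then obtained not by a direct estimate but by \emph{interpolating} in the Besov scale between Proposition~\ref{prop:ProbabilisticEstimateDecay} and the radial-range Strichartz estimate of Proposition~\ref{prop:ProbabilisticEstimateReg}; the threshold $s>\tfrac{d+1}{2(d-1)}=\tfrac56$ drops out of the arithmetic of that interpolation, not from balancing $N_k$ against Bessel decay. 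Your sketch captures the right ingredients but assembles them in a way that would not close; the annulus-based improved dispersive estimate and the interpolation step are the missing ideas.
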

Uniqueness in the above theorem means that 
\begin{equation}
	\label{eq:Defv}
	(u, \partial_t u) - (S(t)(f_0^\om, f_1^\om), \partial_t S(t)(f_0^\om, f_1^\om))
\end{equation}
is unique in $(C(\R, \dot{H}^1(\R^4)) \cap L^3(\R, L^6(\R^4))) \times C(\R,L^2(\R^4))$.

For the proof of Theorem~\ref{thm:AlmostSureScattering} we set
\begin{align*}
	v(t) = u(t) - S(t)(P_{>8} f_0^\om, P_{>8} f_1^\om).
\end{align*}
Note that $u$ is a solution of~\eqref{eq:CubicNLWRand} if and only if $v$ solves the forced cubic nonlinear wave equation
\begin{equation}
	\label{eq:CubicNLWForced}
	\begin{aligned}
		-\partial_t^2 v + \Delta v &= (v+F)^3 \hspace{4em} \text{on } \R \times \R^4, \\
			(v(0), \partial_t v(0)) &= (v_0, v_1)
	\end{aligned}
\end{equation}
with forcing $F(t) = S(t)(P_{>8} f_0^\om, P_{>8} f_1^\om)$ and initial data $(v_0, \!v_1) = (P_{\leq 8} f_0^\om, P_{\leq 8} f_1^\om)$.
We use the overall strategy developed in~\cite{DLM2020} and successfully applied in~\cite{KMV2019, DLM2019, Br2021, SSchr2021} to obtain almost sure scattering results for the energy-critical cubic nonlinear wave and Schr{\"o}dinger equation. The strategy consists in developing a suitable local wellposedness and perturbation theory for the forced equation~\eqref{eq:CubicNLWForced}, which is then combined with the existing deterministic theory for~\eqref{eq:CubicNLW} to derive a scattering result for~\eqref{eq:CubicNLWForced} conditioned on an a priori bound of the energy $E(v)$ of $v$. Note that the energy of $v$ is not conserved as $v$ is not a solution of the energy-critical NLW~\eqref{eq:CubicNLW}. The task of proving scattering for the solution $v$ of~\eqref{eq:CubicNLWForced} thus reduces to bound the energy of $v$.

An a priori bound of the energy of $v$ can be derived by computing the time derivative $\partial_t E(v(t))$ and estimating the resulting terms. It turns out that the most difficult one to control is $\|F v^2 \partial_t v\|_{L^1_t L^1_x}$. If one can only use the energy to bound the factors involving $v$, one is forced to control $F$ in $L^1_t L^\infty_x$. On the other hand, it has been known from~\cite{BT2014} that $F \in L^3_t L^6_x \cap L^1_t L^\infty_x$ implies an a priori bound of the energy of $v$, see~\cite[Remark~1.8]{DLM2020}. While the $L^3_t L^6_x$-bound for the linear evolution of the wave equation with randomized data is easily obtained, the global $L^1_t L^\infty_x$-estimate has been the main obstacle ever since.

With such an $L^1_t L^\infty_x$-bound for the linear solution of the wave equation with Wiener-randomized data not available (cf.~\cite[Remark~1.8]{DLM2020}), the authors in~\cite{DLM2020} not only employed the energy but also a Morawetz-type estimate and a double bootstrap argument in order to control the energy of $v$. However, this approach required to control spatially weighted norms of $F$, leading to the radial symmetry assumption on $(f_0, f_1)$ so that the linear flow with randomized data satisfied the assumptions on $F$ almost surely. This approach yielded almost sure scattering for randomized radially symmetric data $(f_0, f_1) \in H^s(\R^4) \times H^{s-1}(\R^4)$ for $\frac{1}{2} < s < 1$. Using local energy decay, this result was improved to $0 < s < 1$ in~\cite{DLM2019}.

In order to remove the radial symmetry assumption, the Wiener randomization was combined with the physical-space randomization in~\cite{Br2021}. With a global $L^1_t L^\infty_x$-estimate still not available, a careful wave packet analysis was performed to prove almost sure scattering for randomized data $(f_0, f_1) \in H^s(\R^4) \times H^{s-1}(\R^4)$ with $\frac{11}{12} < s < 1$.

In this article, we further add the angular randomization with respect to a good frame to the randomization from~\cite{Br2021}. We show that the resulting randomization~\eqref{eq:DefRandomization} allows us to bound the linear evolution of the wave equation with randomized data almost surely in $L^1_t L^\infty_x$. As noted above, such a bound then implies almost sure scattering of solutions to~\eqref{eq:CubicNLW}. Moreover, we obtain this $L^1_t L^\infty_x$-bound almost surely for linear solutions of the wave equation with randomized initial data from $H^s(\R^4) \times H^{s-1}(\R^4)$ with $\frac{5}{6} < s < 1$, where the original data need not be radially symmetric. Consequently, we improve the regularity threshold for almost sure scattering of~\eqref{eq:CubicNLW} from $s > \frac{11}{12}$ in~\cite{Br2021} to $s > \frac{5}{6}$.

The proof of Theorem~\ref{thm:AlmostSureScattering} is thus based on new probabilistic estimates for the linear flow of the wave equation with randomized initial data, the main novelty being the almost sure $L^1_t L^\infty_x$-estimate of $e^{\pm \imu t |\nabla|} f^\om$. To obtain this estimate, we interpolate between two different probabilistic estimates which exploit the different advantages of the individual decompositions of the randomization.

In fact, the angular randomization with respect to a good frame allows us to employ Strichartz estimates with (almost) the same range of exponents as for the radial wave equation. Combined with the Wiener randomization, we obtain in Proposition~\ref{prop:ProbabilisticEstimateReg} a set of almost sure space-time estimates for $e^{\pm \imu t |\nabla|} f^\om$ where the loss of derivatives is significantly improved in comparison to the deterministic case.

The physical-space randomization on the other hand gives access to the dispersive decay of the wave equation. However, in contrast to the Schr{\"o}dinger case in~\cite{SSchr2021}, a direct application of the dispersive estimate for the wave equation leads to a derivative loss which is too high in order to obtain an almost sure $L^1_t L^\infty_x$ estimate of $e^{\pm \imu t |\nabla|}f^\om$ with $f \in H^s(\R^4)$ with $s < 1$. In view of the Wiener randomization, an idea to reduce this loss of regularity is to use the Klainerman-Tataru-refinement of the dispersive estimate from~\cite{KlTa1999}. A direct application of this estimate again fails due to the additional angular decomposition between the unit-scale decomposition in frequency and in physical space. We overcome this problem by employing a variant of this estimate which only requires the data to be frequency localized to thin annuli at the expense of only being valid for large times (where the meaning of large depends on the frequency), see Remark~\ref{rem:DiscussionPhysicalSpaceRand} for further discussion. In Proposition~\ref{prop:ProbabilisticEstimateDecay} we show that this variant is still enough to obtain the dispersive decay of the wave equation for $e^{\pm \imu t |\nabla|} f^\om$ almost surely with the same loss of derivatives as in the Klainerman-Tataru estimate. Interpolating between Proposition~\ref{prop:ProbabilisticEstimateReg} and Proposition~\ref{prop:ProbabilisticEstimateDecay}, we then derive the $L^1_t L^\infty_x$-estimate in Proposition~\ref{prop:ProbabilisticL1LinftyEstiamte}.

Finally, we note that the indicated question of compatibility of the different decompositions in the randomization is one of the main technical challenges in the proofs of the probabilistic estimates. To be more precise, we have to make sure that the advantages of one of the decompositions in the randomization is not weakened by the other decompositions in the sense that we need to spend decay or regularity in order to sum up the pieces of the latter. 

The rest of the paper is organized as follows. In Section~\ref{sec:NotPrelim} we introduce some notation and collect several deterministic estimates needed in the following. In particular, we provide in Lemma~\ref{lem:ImprovedDispersiveEstimate} the variant of the improved dispersive estimate mentioned above. Section~\ref{sec:ProbabilisticEstimates} is entirely devoted to the derivation of the probabilistic estimates for the linear flow of the wave equation. In Section~\ref{sec:AlmostSureScattering} we then show how the $L^1_t L^\infty_x$-estimate implies Theorem~\ref{thm:AlmostSureScattering}.

\section{Notation and preliminaries}
\label{sec:NotPrelim}

In this section we fix some notation and gather several deterministic estimates which will be used later on.

Throughout let $d \geq 2$ in this section. We write $A \lesssim B$ if there is a constant $C > 0$ such that $A \leq C B$ and $A \sim B$ if $A \lesssim B$ and $B \lesssim A$.

\subsection*{Symbols and multipliers} Fix an even function $\eta_0 \in C_c^\infty(\R)$ such that $0 \leq \eta_0 \leq 1$, $\eta_0(x) = 1$ for $|x| \leq \frac{5}{4}$ and $\eta_0(x) = 0$ for $|x| \geq \frac{8}{5}$. For each dyadic number $N \in 2^\Z$ we define the symbols
\begin{align*}
	\chi_N(\xi) &= \eta_0(|\xi|/N) - \eta_0(2|\xi|/N), \\
	\chi_{\leq N}(\xi) &= \eta_0(|\xi|/N), \hspace{8em} \chi_{>N}(\xi) = 1 - \eta_0(|\xi|/N)
\end{align*}
on $\R^d$ and the standard Littlewood-Paley projectors as the corresponding Fourier multipliers, i.e.,
\begin{align*}
	P_N f = \cF^{-1}(\chi_N \hat{f}), \qquad P_{\leq N} f = \cF^{-1}(\chi_{\leq N} \hat{f}), \qquad P_{>N} f = \cF^{-1}(\chi_{>N} \hat{f}),
\end{align*}
where $\hat{f} = \cF f$ denotes the Fourier transform of $f$. We also employ the thickened Littlewood-Paley operators 
\begin{align*}
	\tilde{P}_N = \sum_{|\log_2 (M/N)| \leq 4} P_M
\end{align*}
for all $N \in 2^\Z$.

We further need localizers to annuli with fixed width. Let $\kappa \geq 1$. We fix a non-negative function $\zeta_\kappa \in C_c^\infty(\R)$ such that $\zeta_\kappa(s) = 1$ for $|s| \leq \frac{2}{3}\kappa$ and $\zeta_\kappa(s) = 0$ for $|s| > \frac{3}{4} \kappa$. We set
\begin{equation}
	\label{eq:DefZetaKappaLambda}
	\zeta_{\kappa}^0(s) = \frac{\zeta_\kappa(s)}{\sum_{\nu \in \Z} \zeta_\kappa(s - \nu)}, \qquad 
	\zeta_{\kappa,\lambda}(\xi) = \zeta_{\kappa}^0(|\xi|-\lambda) = \frac{\zeta_\kappa(|\xi| - \lambda)}{\sum_{\nu \in \Z} \zeta_\kappa(|\xi| - \nu)}
\end{equation}
for all $s \in \R$, $\xi \in \R^d$, and $\lambda \in \N$. Note that $\zeta_\kappa^0(|\xi|-\nu) = 0$ for all $\nu \leq 0$ if $|\xi| \geq \kappa$. Consequently, $\sum_{\lambda \in \N} \zeta_{\kappa,\lambda}(\xi) = 1$ for all $\xi \in \R^d$ with $|\xi| \geq \kappa$. We also have $\supp \zeta_{\kappa,\lambda} \subseteq \{\xi \in \R^d \colon -\frac{3}{4} \kappa + \lambda \leq |\xi| \leq \lambda + \frac{3}{4} \kappa\}$ for all $\lambda \in \N$. We define the corresponding multipliers by
\begin{equation}
	\label{eq:DefQkappalambda}
	Q_{\kappa, \lambda} f = \cF^{-1}(\zeta_{\kappa,\lambda} \hat{f})
\end{equation}
for all $\lambda \in \N$. Finally, we fix a non-negative function $\tilde{\zeta}_\kappa \in C^\infty_c(\R)$ such that $\tilde{\zeta}_\kappa(s) = 1$ for $|s| \leq \frac{3}{4} \kappa$ and $\tilde{\zeta}_\kappa(s) = 0$ for $|s| >  \kappa$. We set
\begin{equation}
	\label{eq:Deftildezeta1mu}
	\tilde{\zeta}_{\kappa,\lambda}(\xi) = \tilde{\zeta}_{\kappa}(|\xi| - \lambda)
\end{equation}
for all $\xi \in \R^d$ and $\lambda \in \N$. Note that $\tilde{\zeta}_{\kappa,\lambda} = 1$ on the support of $\zeta_{\kappa,\lambda}$ and $\supp \tilde{\zeta}_{\kappa,\lambda} \subseteq \{\xi \in \R^d \colon -\kappa + \lambda \leq |\xi| \leq \lambda + \kappa\}$ for all $\lambda \in \N$.

\subsection*{Function spaces}
Let $p,\mu \in [1,\infty]$. We set $\cL^p(0,\infty) = L^p((0,\infty), r^{d-1} \dd r)$ and define the function space $\cL^p L^\mu(\R^d)$, anisotropic in the radial and the angular variable, by the norm
\begin{align*}
	\|f\|_{\cL_r^p L^\mu_\theta} = \Big(\int_0^\infty \Big(\int_{S^{d-1}} |f(r \theta)|^{\mu} \dd \theta \Big)^{\frac{p}{\mu}} r^{d-1} \dd r \Big)^{\frac{1}{p}}
\end{align*}
with the usual adaptions if $p = \infty$ or $\mu = \infty$.

Let $I \subseteq \R$ be an interval and $q \in [1,\infty]$. For a function space $X$ with norm $\| \cdot \|_X$ we set
\begin{align*}
	\|f\|_{L^q_t X} = \Big(\int_I \|f(t)\|_X^q \dd t\Big)^{\frac{1}{q}}
\end{align*}
for the norm of $L^q(I,X)$, with the usual adaption in the case $q = \infty$. We do not specify the interval $I$ if it is clear from the context but we write $\|f\|_{L^q_I X}$ if we want to emphasize the underlying time interval.

Let $s \in \R$. We use the standard homogeneous Besov space $\dot{B}^s_{p,2}(\R^d)$ and the Besov-type spaces $\dot{B}^s_{(p,\mu),2}(\R^d)$ and $\dot{B}^s_{q,(p,\mu),2}(I \times \R^d)$ defined by the norms
\begin{align*}
	\|f\|_{\dot{B}^s_{p,2}} &= \Big(\sum_{N \in 2^\Z} N^{2s} \|P_N f\|_{L^p}^2 \Big)^{\frac{1}{2}}, \qquad
	\|f\|_{\dot{B}^s_{(p,\mu),2}} = \Big(\sum_{N \in 2^\Z} N^{2 s} \|P_N f\|_{\cL^p_r L^\mu_\theta}^2 \Big)^{\frac{1}{2}}, \\
	\|g\|_{\dot{B}^s_{q,(p,\mu),2}} &= \Big(\sum_{N \in 2^\Z} N^{2 s} \|P_N g\|_{L^q_t \cL^p_r L^\mu_\theta}^2 \Big)^{\frac{1}{2}}.
\end{align*}
Finally, we write $\dot{B}^s_{q,p,2}(I \times \R^d) = \dot{B}^s_{q,(p,p),2}(I \times \R^d)$ in the case $\mu = p$.

\subsection*{Deterministic estimates}

	We define the operators $T^{\frac{d + 2k - 2}{2}}_1$ by
	\begin{equation}
		\label{eq:DefTk1}
		T^{\frac{d + 2k - 2}{2}}_1(h)(t,r) = r^{-\frac{d-2}{2}} \int_0^\infty e^{\imu t \rho} J_{\frac{d+2k-2}{2}}(r \rho) \chi_{2^0}(\rho) h(\rho) \rho^{\frac{d}{2}} \dd \rho 
	\end{equation}
	for $t \in \R$ and $r > 0$ for all $k \in \N_0$ and $h \in L^2(0,\infty)$. In view of~\eqref{eq:RepresentationgMi}, the operators $T^{\frac{d + 2k - 2}{2}}_1$ naturally appear when we estimate the linear (half)-wave flow with randomized data.
	Although they are not explicitly mentioned in~\cite{Sterbenz2005}, the asymptotic properties of the operators $T_1^{\frac{d+2k-2}{2}}$ have been studied in~\cite{Sterbenz2005} (cf. (4.2)-(4.9) in~\cite{Sterbenz2005}). Proposition~4.1 in~\cite{Sterbenz2005} implies the following lemma.
\begin{lem}
	\label{lem:EstimateTkp11}
	Let $q,p \in [2,\infty]$ such that
	\begin{align*}
		\frac{1}{q} + \frac{d-1}{p} < \frac{d-1}{2} \qquad \text{or} \qquad (q,p)=(\infty,2).
	\end{align*}
	Then there is a constant $C > 0$ such that
	\begin{align*}
		\|T^{\frac{d+2k-2}{2}}_1(h)\|_{L^q_t \cL^p_r} \leq C \|h\|_{L^2}
	\end{align*}
	for all $k \in \N_0$ and $h \in L^2(0,\infty)$.
\end{lem}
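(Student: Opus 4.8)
The plan is to reduce the estimate to the asymptotic/decay bounds for the operators $T^{\frac{d+2k-2}{2}}_1$ recorded in~\cite{Sterbenz2005} and to treat the $L^2$-endpoint $(q,p)=(\infty,2)$ by hand via the Hankel transform. Write $\mu = \frac{d+2k-2}{2}$. The starting observation is that $T^{\mu}_1(h)(t,r)$ is, up to the angular factor $b_{k,l}(\theta)$, exactly the radial profile of the half-wave evolution $e^{\imu t|\nabla|}$ applied to the frequency-unit-localized spherical-harmonic mode of degree $k$; this is the Bochner/Funk--Hecke identity behind the representation~\eqref{eq:RepresentationgMi}. Hence $T^\mu_1$ inherits the dispersive behaviour of the wave group, and the crucial point throughout will be that every bound can be taken uniform in the Bessel order $\mu$, i.e.\ in $k$.

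First I would dispose of the endpoint $(q,p)=(\infty,2)$. Recognising $r^{\frac{d-2}{2}}T^{\mu}_1(h)(t,r)=\mathcal{H}_\mu\big[e^{\imu t\,\cdot}\,\chi_{2^0}(\cdot)\,h(\cdot)\,(\cdot)^{\frac{d-2}{2}}\big](r)$, where $\mathcal{H}_\mu g(r)=\int_0^\infty g(\rho)J_\mu(r\rho)\,\rho\dd\rho$ is the Hankel transform of order $\mu$, and using that $\mathcal{H}_\mu$ is an isometry of $L^2((0,\infty),\rho\dd\rho)$, one obtains
\[
	\|T^{\mu}_1(h)(t)\|_{\cL^2_r}^2 = \int_0^\infty |\chi_{2^0}(\rho)\,h(\rho)|^2\,\rho^{d-1}\dd\rho \lesssim \|h\|_{L^2}^2
\]
uniformly in $t\in\R$ and $k\in\N_0$, since $\chi_{2^0}$ is supported where $\rho\sim 1$. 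This gives the $(q,p)=(\infty,2)$ bound.

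For the range $\frac1q+\frac{d-1}{p}<\frac{d-1}{2}$ I would run the $TT^*$ argument underlying~\cite[Proposition~4.1]{Sterbenz2005}: the kernel of $T^{\mu}_1(T^{\mu}_1)^*$ is, up to the natural weights,
\[
	K_\mu(t,r;t',r') = (rr')^{-\frac{d-2}{2}}\int_0^\infty e^{\imu(t-t')\rho}\,J_\mu(r\rho)\,J_\mu(r'\rho)\,\chi_{2^0}^2(\rho)\,\rho^d\dd\rho ,
\]
and the asymptotics for Bessel functions of large order collected in~\cite[(4.2)--(4.9)]{Sterbenz2005} --- uniform in $\mu$ both away from and near the turning point $r\rho\sim\mu$ --- yield decay of $K_\mu$ in $|t-t'|$ together with the appropriate gain in $r,r'$. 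Feeding this into Hardy--Littlewood--Sobolev in the time and radial variables produces the stated mixed-norm bound with a constant independent of $k$; because the condition on $(q,p)$ is open, no endpoint $L^2\to L^2$ machinery is needed beyond the previous paragraph. Invoking~\cite[Proposition~4.1]{Sterbenz2005} (with $\chi_{2^0}$ in place of the Littlewood--Paley cutoff there) after identifying $T_1^\mu$ with the operator treated there thus finishes the proof.

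The main obstacle is precisely the uniformity in $k$, i.e.\ that the constants do not degenerate as $\mu=\frac{d+2k-2}{2}\to\infty$. This is where the angular decomposition of the randomization is bought and paid for: each spherical-harmonic mode disperses like a radial solution, but one must control the transition region $r\rho\approx\mu$ of $J_\mu$ carefully. That analysis is carried out in~\cite{Sterbenz2005} and we merely quote it; the reduction of $T_1^\mu$ to the objects treated there and the remaining interpolation/HLS steps are routine.
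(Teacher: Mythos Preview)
Your proposal is correct and follows essentially the same approach as the paper: both reduce the lemma to \cite[Proposition~4.1]{Sterbenz2005} and the uniform-in-$\mu$ Bessel asymptotics recorded in \cite[(4.2)--(4.9)]{Sterbenz2005}, with the passage from those bounds to the mixed-norm Strichartz estimate proceeding as in the derivation of \cite[Theorem~1.5]{Sterbenz2005}. Your explicit Hankel-transform treatment of the endpoint $(q,p)=(\infty,2)$ is a clean addition that the paper leaves implicit, but otherwise the arguments coincide.
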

Lemma~\ref{lem:EstimateTkp11} follows in the same way from~\cite[Proposition~4.1]{Sterbenz2005} as \cite[Theorem~1.5]{Sterbenz2005}. The proof is also implicitly contained in the proof of~\cite[Proposition~3.6]{S2020} in the case $d = 3$, see (3.8) to (3.11) and the following two estimates in~\cite{S2020} with $\hat{c}^{m}_{k,l}$ replaced by $h$, and works without changes in general dimension.

We next provide the variant of the Klainerman-Tataru improved dispersive estimate for the wave equation which we need in order to derive improved decay properties for linear solutions with randomized data without losing too many derivatives. The original version states that if the initial data is localized to a small cube in frequency space, one gains the side length of this cube as a factor in the dispersive estimate, see~(A.66) in~\cite{KlTa1999}. For our purposes it is crucial that the initial data only needs to be localized on thin annuli in frequency space. However, the arguments in the proof of~\cite[(A.66)]{KlTa1999} basically show that for large times localization in the radial variable is sufficient to obtain the improvement, see also the proof of~\cite[Lemma~3.1]{BH2017}. We provide an explicit proof here.
\begin{lem}
	\label{lem:ImprovedDispersiveEstimate}
	Let $c > 0$ and $\kappa \geq 1$. There is a constant $C > 0$ such that
	\begin{align*}
		\|e^{\pm\imu t |\nabla|} Q_{\kappa, \lambda} f \|_{L^p_x} \leq C \Big(\frac{|t|}{\lambda}\Big)^{-\frac{d-1}{2}(1-\frac{2}{p})} \|Q_{\kappa,\lambda} f\|_{L^{p'}_x}
	\end{align*}
	for all $|t| \geq c \lambda$, $\lambda \in \N$ with $\lambda \geq \kappa + 1$, and $p \in [2,\infty]$.
\end{lem}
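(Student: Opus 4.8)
The plan is to reduce the claim to an $L^1\to L^\infty$ bound on the convolution kernel of $e^{\pm\imu t|\nabla|}\tilde{Q}_{\kappa,\lambda}$ and to establish that bound by an integration-by-parts analysis in the radial frequency variable (together with a crude estimate near the light cone), using that $Q_{\kappa,\lambda}f$ is frequency localized to an annulus of thickness $\sim\kappa$ around the sphere of radius $\lambda$. Since $\tilde{\zeta}_{\kappa,\lambda}\equiv1$ on $\supp\zeta_{\kappa,\lambda}$, one has $e^{\pm\imu t|\nabla|}Q_{\kappa,\lambda}f=\big(e^{\pm\imu t|\nabla|}\tilde{Q}_{\kappa,\lambda}\big)\big(Q_{\kappa,\lambda}f\big)$ with $\tilde{Q}_{\kappa,\lambda}=\cF^{-1}\tilde{\zeta}_{\kappa,\lambda}\cF$, and $e^{\pm\imu t|\nabla|}\tilde{Q}_{\kappa,\lambda}$ is convolution with $K_t=\cF^{-1}\big(e^{\pm\imu t|\cdot|}\tilde{\zeta}_{\kappa,\lambda}\big)$, which is Schwartz because $\tilde{\zeta}_{\kappa,\lambda}$ vanishes near the origin (as $\lambda\geq\kappa+1$). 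By Young's inequality this operator maps $L^1\to L^\infty$ with norm $\|K_t\|_{L^\infty_x}$, it maps $L^2\to L^2$ with norm $\lesssim1$ since $0\leq\tilde{\zeta}_{\kappa,\lambda}\lesssim1$, and hence by Riesz--Thorin interpolation and the substitution $g=Q_{\kappa,\lambda}f$ it suffices to prove $\|K_t\|_{L^\infty_x}\lesssim_{c,\kappa}(|t|/\lambda)^{-\frac{d-1}{2}}$ for $|t|\geq c\lambda$ and $\lambda\geq\kappa+1$; the full range $p\in[2,\infty]$ with exponent $-\frac{d-1}{2}(1-\frac2p)$ then follows.

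For the kernel bound we may assume the $+$ sign and $t>0$ by conjugation and the radial symmetry of $\tilde{\zeta}_{\kappa,\lambda}$. Passing to polar coordinates and using the classical formula for the Fourier transform of the surface measure on $S^{d-1}$ gives
\begin{equation*}
	K_t(x)=c_d\int_0^\infty\tilde{\zeta}_\kappa(\rho-\lambda)\,\rho^{d-1}\,(\rho|x|)^{-\frac{d-2}{2}}J_{\frac{d-2}{2}}(\rho|x|)\,e^{\imu t\rho}\dd\rho ,
\end{equation*}
where on $\supp\tilde{\zeta}_\kappa(\cdot-\lambda)$ one has $\rho\in[\lambda-\kappa,\lambda+\kappa]$, so $\rho\sim\lambda$ thanks to $\lambda\geq\kappa+1$. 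I would split the Bessel factor, via a smooth cutoff in $\rho|x|$, into a piece $H_0(\rho|x|)$ supported in $\{\rho|x|\lesssim1\}$ which is smooth with all derivatives bounded there, and a piece $(\rho|x|)^{-\frac{d-1}{2}}\big(e^{\imu\rho|x|}b_+(\rho|x|)+e^{-\imu\rho|x|}b_-(\rho|x|)\big)$ supported in $\{\rho|x|\gtrsim1\}$ with $b_\pm$ symbols of order $0$, and treat the two resulting kernels $K_t^0$ and $K_t^\pm$ separately.

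For $K_t^0$ the $\rho$-amplitude $\tilde{\zeta}_\kappa(\rho-\lambda)\rho^{d-1}H_0(\rho|x|)$ is smooth, supported where $\rho\sim\lambda$ and $|x|\lesssim1$, with $m$-th $\rho$-derivative $\lesssim_m\lambda^{d-1}$; integrating by parts $d-1$ times against $e^{\imu t\rho}$ (no boundary terms, since $\tilde{\zeta}_\kappa$ vanishes to infinite order at $\pm\kappa$) and using $t\geq c\lambda$ gives $|K_t^0(x)|\lesssim_{c,\kappa}t^{-(d-1)}\lambda^{d-1}\lesssim_c(t/\lambda)^{-\frac{d-1}{2}}$. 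For $K_t^\pm$, writing $r=|x|$, the amplitude $\tilde{\zeta}_\kappa(\rho-\lambda)\rho^{\frac{d-1}{2}}b_\pm(\rho r)$ is supported where $\rho r\gtrsim1$ (hence $r\gtrsim\lambda^{-1}$), and there each $\partial_\rho$ on $b_\pm(\rho r)$ costs only $\rho^{-1}\leq1$, so its $m$-th derivative is $\lesssim_m\kappa^{-m}\lambda^{\frac{d-1}{2}}$; thus
\begin{equation*}
	|K_t^\pm(x)|\lesssim r^{-\frac{d-1}{2}}\,\Big|\int\tilde{\zeta}_\kappa(\rho-\lambda)\,\rho^{\frac{d-1}{2}}\,b_\pm(\rho r)\,e^{\imu\rho(t\pm r)}\dd\rho\Big| .
\end{equation*}
If $r\leq t/2$ or $r\geq2t$, then $|t\pm r|\gtrsim\max(t,r)$, so $d-1$ integrations by parts give $|K_t^\pm(x)|\lesssim_{c,\kappa}\lambda^{\frac{d-1}{2}}r^{-\frac{d-1}{2}}|t\pm r|^{-(d-1)}$, and inserting $r\gtrsim\lambda^{-1}$, $t\geq c\lambda$, and splitting $|t\pm r|^{-(d-1)}$ (half of it absorbing the excess from $r^{-\frac{d-1}{2}}\lesssim\lambda^{\frac{d-1}{2}}$, the other producing decay in $t$) yields $|K_t^\pm(x)|\lesssim_{c,\kappa}(t/\lambda)^{-\frac{d-1}{2}}$. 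In the remaining regime $t/2<r<2t$ the same works for $K_t^+$ (now $t+r\sim t$), while for $K_t^-$ one estimates, without integrating by parts,
\begin{equation*}
	|K_t^-(x)|\lesssim r^{-\frac{d-1}{2}}\int\tilde{\zeta}_\kappa(\rho-\lambda)\,\rho^{\frac{d-1}{2}}\dd\rho\lesssim\kappa\,\lambda^{\frac{d-1}{2}}\,t^{-\frac{d-1}{2}}=\kappa\,(t/\lambda)^{-\frac{d-1}{2}} ,
\end{equation*}
using $r\sim t$; this is the claimed bound with the admissible $\kappa$-dependent constant. Summing the three contributions gives $\|K_t\|_{L^\infty_x}\lesssim_{c,\kappa}(|t|/\lambda)^{-\frac{d-1}{2}}$.

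The main obstacle is not a single sharp estimate but the compatibility of the mechanisms: one has to see that localizing only to a thin annulus (rather than to a small cube as in the Klainerman--Tataru estimate) together with $|t|\geq c\lambda$ is exactly enough to integrate by parts in the radial variable away from the light cone, and that on the light cone $r\sim t$ the radial integral of $\rho^{\frac{d-1}{2}}$ over an interval of length $\kappa$ produces the factor $\kappa\lambda^{\frac{d-1}{2}}$, which is the source of the gain of one power of $\lambda$ over the standard $P_\lambda$ dispersive estimate. The bookkeeping in the off-cone regimes — balancing the integration-by-parts gains $|t\pm r|^{-(d-1)}$ against $r^{-\frac{d-1}{2}}$ and the volume factor $\lambda^{d-1}$ so as to land precisely on $(|t|/\lambda)^{-\frac{d-1}{2}}$ — and the verification that the oscillatory amplitude really has smoothness scale $\kappa$ (which relies on $\rho r\gtrsim1$ there) are the steps where the hypotheses $\lambda\geq\kappa+1$ and $|t|\geq c\lambda$ are genuinely used.
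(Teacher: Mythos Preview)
Your argument is correct and follows essentially the same strategy as the paper's proof: reduce to the $L^\infty$ kernel bound by interpolation with the trivial $L^2$ estimate, then split into a non-stationary regime (handled by integration by parts) and a near-cone regime $|x|\sim t$ where the thin-annulus volume $\sim\kappa$ produces the factor $\kappa\lambda^{(d-1)/2}$. The only notable implementation difference is that the paper rescales to unit frequency and carries out the angular analysis by hand (integration by parts in the polar angle $\theta$ together with van der Corput), whereas you package the same information via the asymptotic expansion of $J_{(d-2)/2}$ and then integrate by parts in the radial frequency $\rho$; your additional split by $\rho|x|\lesssim 1$ versus $\rho|x|\gtrsim 1$ plays the role of separating the small-argument and large-argument behavior of the Bessel factor, which the paper does not need because its region $|y|\le s/2$ is handled by a full $d$-dimensional non-stationary phase argument.
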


\begin{proof}
	By time reversal symmetry, it is enough to prove the assertion for $e^{\imu t |\nabla|}$. Moreover, we only show the estimate for positive times as the proof for negative times is analogous.
	
	By interpolation with
	\begin{align*}
		\|e^{\imu t |\nabla|} Q_{\kappa,\lambda} f \|_{L^2_x} = \|Q_{\kappa,\lambda} f \|_{L^2_x},
	\end{align*}
	it is enough to prove the assertion for $p = \infty$. We next note that
	\begin{align*}
		e^{\imu t |\nabla|} Q_{\kappa,\lambda} f = K_{\kappa,\lambda}(t) \ast Q_{\kappa,\lambda} f,
	\end{align*}
	where the convolution kernel is given by
	\begin{align*}
		K_{\kappa,\lambda}(t,x) = \int_{\R^d} e^{\imu t |\xi| + \imu x \xi} \tilde{\zeta}_{\kappa,\lambda}(\xi) \dd \xi
	\end{align*}
	and $\tilde{\zeta}_{\kappa,\lambda}$ was defined in~\eqref{eq:Deftildezeta1mu}. By Young's inequality it is thus enough to prove 
	\begin{align*}
		\|K_{\kappa,\lambda}(t)\|_{L^\infty_x} \lesssim \Big(\frac{t}{\lambda}\Big)^{-\frac{d-1}{2}}
	\end{align*}
	for all $t \geq c \lambda$ and $\lambda \geq \kappa + 1$. Rescaling yields
	\begin{align}
	\label{eq:HalfWaveKernelRescaled}
		K_{\kappa,\lambda}(t,x) = \lambda^d K_{\frac{\kappa}{\lambda},1}(\lambda t, \lambda x)
	\end{align}
	for all $x \in \R^d$ and $t \in \R$, where
	\begin{align*}
		K_{\frac{\kappa}{\lambda},1}(s, y) = \int_{\R^d} e^{\imu s |\xi| + \imu y \xi} \tilde{\zeta}_{\frac{\kappa}{\lambda},1}(\xi) \dd \xi
	\end{align*}
	and $\tilde{\zeta}_{\frac{\kappa}{\lambda},1}(\xi) = \tilde{\zeta}_{\kappa,\lambda}(\lambda \xi)$ for all $\xi \in \R^d$. We then have
	\begin{align}	 
	\label{eq:SuppZetaRescaled}
	 \supp \tilde{\zeta}_{\frac{\kappa}{\lambda},1} \subseteq \Big\{\xi \in \R^d \colon -\frac{\kappa}{\lambda} + 1 \leq |\xi| \leq 1 + \frac{\kappa}{\lambda}\Big\} \subseteq \Big\{\xi \in \R^d \colon \frac{1}{1 + \kappa} \leq |\xi| \leq 2\Big\}
	 \end{align}
	 and thus
	\begin{align}
	\label{eq:EstimatesTildeZetaRescaled}
		|\partial^\alpha \tilde{\zeta}_{\frac{\kappa}{\lambda},1}(\xi)| \leq C_\alpha \lambda^{|\alpha|}, \qquad |\supp \tilde{\zeta}_{\frac{\kappa}{\lambda},1}| \lesssim \frac{\kappa}{\lambda}
	\end{align}
	for all $\xi \in \R^d$, $\alpha \in \N_0^d$, and $\lambda \geq \kappa + 1$, where the constants are independent of $\lambda$. We next show
	\begin{align}
	\label{eq:EstimateRescaledKernel}
		\|K_{\frac{\kappa}{\lambda},1}(s)\|_{L^\infty_y} \lesssim \frac{1}{\lambda} s^{-\frac{d-1}{2}} + \frac{1}{\lambda} \Big(\frac{s}{\lambda}\Big)^{-(d-1)}
	\end{align}
	for all $s > 0$ and $\lambda \geq \kappa + 1$ as this estimate combined with~\eqref{eq:HalfWaveKernelRescaled} implies
	\begin{align*}
		\|K_{\kappa,\lambda}(t,x)\|_{L^\infty_x} &= \| \lambda^d K_{\frac{\kappa}{\lambda},1}(\lambda t, \lambda x)\|_{L^\infty_x} \\
		&\lesssim \lambda^{d-1} (\lambda t)^{-\frac{d-1}{2}} + \lambda^{(d-1)} t^{-(d-1)} \lesssim \Big(\frac{t}{\lambda}\Big)^{-\frac{d-1}{2}}
	\end{align*}
	for all $t \geq c \lambda$ and $\lambda \geq \kappa + 1$. It thus remains to prove~\eqref{eq:EstimateRescaledKernel}.
	
	Let $s > 0$. In the case $|y| \leq \frac{1}{2} s$ the phase is non-stationary and we employ the usual integration by parts argument. Combined with~\eqref{eq:EstimatesTildeZetaRescaled} and exploiting that with each integration by parts, at most one derivative falls onto $\tilde{\zeta}_{\frac{\kappa}{\lambda},1}$, we obtain
	\begin{equation}
	\label{eq:EstKernelStat}
		|K_{\frac{\kappa}{\lambda},1}(s,y)| \lesssim_N \frac{1}{\lambda} \Big(\frac{s}{\lambda}\Big)^{-N}
	\end{equation}
	 for all $\lambda \geq \kappa + 1$ and every $N \in \N$, where we gain the factor $\frac{1}{\lambda}$ because of the size of the support of $\tilde{\zeta}_{\kappa,\lambda}$.
	 
	 In the case $|y| \geq \frac{1}{2} s$, we use spherical coordinates. After rotation, we can assume that $y = |y| e_d$, where $e_d = (0,\ldots,0,1)$ denotes the $d$-th unit vector. We then get
	 \begin{align}
	 \label{eq:KernelInSphericalCoord}
	 	K_{\frac{\kappa}{\lambda},1}(s,y) &= \int_0^\infty \int_0^\pi \int_{S^{d-2}} e^{\imu s \rho + \imu |y| \rho \cos(\theta)} \tilde{\zeta}_{\frac{\kappa}{\lambda},1}(\rho \, e_d) \rho^{d-1} \sin^{d-2}(\theta) \dd \om \dd \theta \dd \rho \nonumber\\
	 	&= \om_{d-2} \int_0^\infty \Big(\int_0^\pi e^{\imu |y| \rho \cos(\theta)} \sin^{d-2}(\theta) \dd \theta\Big) \, e^{\imu s \rho} \tilde{\zeta}_{\frac{\kappa}{\lambda},1}(\rho \,e_d) \rho^{d-1} \dd \rho,
	 \end{align}
	 where $\om_{d-2} = 2 \pi^{\frac{d-1}{2}}/\Gamma(\frac{d-1}{2})$ denotes the surface area of $S^{d-2}$ and we used that $\tilde{\zeta}_{\frac{\kappa}{\lambda},1}$ is radially symmetric.
	 If the dimension $d$ is even, we integrate by parts $\frac{d-2}{2}$ times with respect to $\theta$, which yields
	 \begin{align*}
	 	&\int_0^\pi e^{\imu |y| \rho \cos(\theta)} \sin^{d-2}(\theta) \dd \theta 
	 	= \frac{1}{(-\imu |y| \rho)^{\frac{d-2}{2}}} \int_0^\pi \partial_\theta^{\frac{d-2}{2}} e^{\imu |y| \rho \cos(\theta)} \sin^{\frac{d-2}{2}}(\theta) \dd \theta \\
	 	&= \frac{1}{(\imu |y| \rho)^{\frac{d-2}{2}}} \Big(\int_0^\pi e^{\imu |y| \rho \cos(\theta)} \cos^{\frac{d-2}{2}}(\theta) \dd \theta + \int_0^\pi e^{\imu |y| \rho \cos(\theta)} \sin^2(\theta) g(\theta) \dd \theta\Big),
	 \end{align*}
	 where $g$ is a linear combination of the functions $\sin^j(\theta) \cos^k(\theta)$ with $j,k \in \N_0$, $j+k = \frac{d-6}{2}$ (which also means that $g = 0$ in the case $d \leq 4$).
	 In the second integral on the above right-hand side we can integrate by parts again. Splitting the first integral via a smooth partition of unity of $[0,\pi]$, we can either integrate by parts again or employ the van der Corput lemma. Summing up, we obtain
	 \begin{align}
	 \label{eq:EstOscIntTheta}
	 	\Big| \int_0^\pi e^{\imu |y| \rho \cos(\theta)} \sin^{d-2}(\theta) \dd \theta  \Big| \lesssim (|y| \rho)^{-\frac{d-1}{2}} \lesssim s^{-\frac{d-1}{2}}
	 \end{align}
	 for all $\rho$ with $\rho \, e_d \in \supp \tilde{\zeta}_{\frac{\kappa}{\lambda},1}$, where we also used~\eqref{eq:SuppZetaRescaled} in the last estimate. If the dimension $d$ is odd, we can directly integrate $\frac{d-1}{2}$ times by parts and we also obtain~\eqref{eq:EstOscIntTheta} in this case. Combining the support property~\eqref{eq:SuppZetaRescaled} with~\eqref{eq:EstOscIntTheta}, we obtain from~\eqref{eq:KernelInSphericalCoord}
	 \begin{equation}
	 \label{eq:EstimateKernelNonStat}
	 	|K_{\frac{\kappa}{\lambda},1}(s,y)| \lesssim \frac{1}{\lambda} s^{-\frac{d-1}{2}}.
	 \end{equation}
	 Consequently, estimate~\eqref{eq:EstKernelStat} with $N = d-1$ and estimate~\eqref{eq:EstimateKernelNonStat} imply
	 \begin{align*}
	 	\|K_{\frac{\kappa}{\lambda},1}(s)\|_{L^\infty_y} \lesssim  \frac{1}{\lambda} s^{-\frac{d-1}{2}} + \frac{1}{\lambda} \Big(\frac{s}{\lambda}\Big)^{-(d-1)}
	 \end{align*}
	 for all $s > 0$ and $\lambda \in \N$ with $\lambda \geq \kappa + 1$, i.e.~\eqref{eq:EstimateRescaledKernel}.
\end{proof}

\section{Probabilistic estimates}
\label{sec:ProbabilisticEstimates}
In this section we derive the improved space-time estimates for the linear solutions of the wave equation with randomized data, culminating in the almost sure $L^1_t L^\infty_x$-estimate for $e^{\pm \imu t |\nabla|} f^\om$. We recall that if not specified otherwise, we consider general dimensions $d \geq 2$.

Since the Littlewood-Paley operators do not commute with the physical-space decomposition from~\eqref{eq:DefPhysicalSpacePartition}, we will also need Lemma~3.3 from~\cite{S2020} to sum up the individual pieces of the decomposition. This lemma was given in three dimensions in~\cite{S2020}, but both its statement and its proof are independent of the dimension.
\begin{lem}
	\label{lem:MismatchEstimates}
	Let $1 \leq p < \infty$, $M,N \in 2^{\N}$ with $|\log_2 \frac{M}{N}| \geq 5$, $l,l' \in \Z^d$, $D > 0$, and let $(\ph_l)_{l \in \Z^d}$ be the partition of unity from~\eqref{eq:DefPhysicalSpacePartition}. Then
	\begin{align}
		&\|\ph_l P_M (\ph_{l'} f)\|_{L^p} + \|\ph_l P_{\leq 2^0} (\ph_{l'}f)\|_{L^p} \lesssim_D \langle l - l' \rangle^{-D} \|f\|_{L^p}, \label{eq:MismatchSpace} \\
		&\|P_M(\ph_l P_N f)\|_{L^p}  \lesssim_D M^{-D} N^{-D} \|f\|_{L^p}, \label{eq:MismatchFrequencyHH} \\
		&\|P_M(\ph_l P_{\leq 2^{-5}M} f)\|_{L^p} \lesssim_D M^{-D} \|f\|_{L^p}, \label{eq:MismatchFrequencyHL}
	\end{align}
	for all $f \in L^p(\R^d)$, where the implicit constants are independent of $M,N,l$ and $l'$.
\end{lem}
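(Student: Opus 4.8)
The plan is to prove the three estimates in Lemma~\ref{lem:MismatchEstimates} by exploiting that the convolution kernels of the relevant Fourier multipliers are Schwartz functions at a fixed scale, so that the cutoff $\ph_{l'}$ (respectively $\ph_l$) localizes to a region far from the support of the function being acted upon, producing the claimed polynomial decay in $\langle l-l'\rangle$ or $M,N$.

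\medskip

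For the second and third estimates (the ``frequency mismatch'' bounds~\eqref{eq:MismatchFrequencyHH} and~\eqref{eq:MismatchFrequencyHL}), I would first observe that $\ph_l P_N f$ has Fourier support in a ball of radius $\sim N$, while $P_M$ restricts to frequencies $\sim M$. Writing $P_M(\ph_l P_N f)$ as convolution with the kernel of $P_M$, one uses that $\widehat{\ph_l P_N f}$ is supported where $|\xi| \lesssim N$, so that a Bernstein-type argument or direct kernel estimate on the compactly-supported-in-physical-space factor $\ph_l$ gains the discrepancy. More precisely, $\ph_l$ is smooth with compact support, so $\widehat{\ph_l}$ is Schwartz; the product $\widehat{\ph_l P_N f} = \widehat{\ph_l} * \widehat{P_N f}$ and then applying $\chi_M$ forces a large frequency shift inside the convolution, which by repeated integration by parts (or equivalently, by the rapid decay of $\widehat{\ph_l}$) produces the factors $M^{-D}N^{-D}$ when $|\log_2(M/N)| \geq 5$, and $M^{-D}$ when the inner piece is only low-frequency localized. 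The key quantitative input is simply that $\ph_l$ is a fixed Schwartz function (translated by $l$, which does not affect $L^p$ norms of the resulting multiplier composition), together with the separation of the frequency supports.

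\medskip

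For the first estimate~\eqref{eq:MismatchSpace}, the roles of physical and frequency space are reversed: $P_M$ (or $P_{\leq 2^0}$) is convolution with a kernel $K_M$ which, after rescaling, is a fixed Schwartz function at scale $M^{-1} \leq 1$; hence $|K_M(x)| \lesssim_D M^d \langle M x\rangle^{-D} \lesssim_D \langle x \rangle^{-D}$ for $M \geq 1$, with uniform constants. Since $\ph_{l'} f$ is supported in the unit cube centered at $l'$ and we then multiply by $\ph_l$, supported near $l$, the convolution $\ph_l (K_M * (\ph_{l'} f))$ only sees the tail of $K_M$ at distance $\sim |l - l'|$, yielding the bound $\langle l - l'\rangle^{-D} \|\ph_{l'} f\|_{L^p} \lesssim \langle l-l'\rangle^{-D}\|f\|_{L^p}$ via Young's inequality and Hölder in the localized region. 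One has to be a little careful to keep the constants uniform in $M$: this is where the restriction $M \in 2^{\N}$ (so $M \geq 1$) matters, ensuring the kernel decay constant does not degenerate.

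\medskip

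I expect the main obstacle to be bookkeeping rather than conceptual: one must track that all implicit constants are genuinely independent of $M, N, l, l'$, which requires phrasing the integration-by-parts/kernel-decay arguments so that translations by $l, l'$ and dilations by $M, N$ are handled by scaling and translation invariance of the $L^p$ norm, and that the ``number of derivatives'' $D$ can be taken arbitrarily large uniformly. A secondary point of care is the endpoint-type operators $P_{\leq 2^0}$ and $P_{\leq 2^{-5}M}$, where one uses that the symbol is supported in a ball and is a fixed (rescaled) bump, so the same Schwartz-kernel reasoning applies verbatim. Since the paper attributes this lemma to~\cite[Lemma~3.3]{S2020} and notes the proof is dimension-independent, I would ultimately just invoke that reference after indicating this scheme.
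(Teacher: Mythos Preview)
Your proposal is correct and follows the standard kernel-decay/integration-by-parts argument for mismatch estimates; the only imprecise phrase is that ``$\ph_l P_N f$ has Fourier support in a ball of radius $\sim N$'' (it does not, as you yourself then note via the convolution $\widehat{\ph_l}\ast\widehat{P_N f}$), but your subsequent explanation is the right one. Since the paper itself gives no proof and simply cites \cite[Lemma~3.3]{S2020}, your plan --- sketch the Schwartz-kernel reasoning and then invoke that reference --- is exactly what the paper does.
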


The above lemma allows us to prove the following estimates, which naturally emerge in the proof of the probabilistic space-time estimates. The first part is similar to~\cite[Corollary~3.3]{SSchr2021} but we need to adapt the proof slightly in order to include the case $s < 0$. The second part of the corollary is more delicate. Since we want to apply the improved Klainerman-Tataru dispersive estimate in the form of Lemma~\ref{lem:ImprovedDispersiveEstimate}, we need to sum up the individual pieces of the decomposition for the randomization intertwined with the operators $Q_{\kappa,\lambda}$.
\begin{cor}
	\label{cor:HsEstimatePhysicalSpace}
	Let $p \in [2,\infty)$, $s \in \R$, and $(\ph_l)_{l \in \Z^d}$ be the partition of unity introduced in~\eqref{eq:DefPhysicalSpacePartition}.
	\begin{enumerate}
		\item \label{it:HsMismatchLP} We then have
				\begin{align*}
					\|\langle M \rangle^s P_{>4} P_M(\ph_l f)\|_{l^2_M l^2_l L^{p'}_x} \lesssim \|f\|_{H^s}
				\end{align*}
				for all $f \in H^s(\R^d)$.
		\item \label{it:HsMismatchLPAndAn} Let $\kappa \geq 1$ and $Q_{\kappa,\lambda}$ be the operators introduced in~\eqref{eq:DefQkappalambda} for $\lambda \in \N$. Then
				\begin{align*}
					\|\langle M \rangle^s Q_{\kappa,\lambda} P_{>4} P_M(\ph_l f)\|_{l^2_M l^2_l l^2_{\lambda \geq \kappa + 1} L^{p'}_x} \lesssim \|f\|_{H^s}
				\end{align*}
				for all $f \in H^s(\R^d)$.
	\end{enumerate}
\end{cor}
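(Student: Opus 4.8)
\emph{Part (i).} The natural approach is to decouple the frequency sum from the physical-space sum. First, note that $P_{>4}P_M = 0$ unless $M \gtrsim 1$, so only $M \in 2^{\N}$ (up to a bounded shift) contribute, and on that range $\langle M\rangle^s \sim M^s$. For fixed $l$, I want to sum in $M$ using almost orthogonality: write $f = \ph_l f$ pieces back together via $\sum_l \ph_l = 1$. The key point is that the physical-space cutoffs $\ph_l$ do \emph{not} commute with $P_M$, so one cannot simply pull $P_M$ through. Instead I would fix $l$ and split $f = \sum_{l'} \ph_{l'} f$, writing
\begin{align*}
	P_M(\ph_l f) = \sum_{l'} \ph_l P_M(\ph_{l'} f) + (\text{error from } (1-\ph_l) \text{ acting after } P_M),
\end{align*}
but more cleanly: estimate $\|P_M(\ph_l f)\|_{L^{p'}} \le \|\ph_l P_M(\ph_{l'} f)\|$-type terms plus mismatch terms, invoking Lemma~\ref{lem:MismatchEstimates}. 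The off-diagonal terms in $l,l'$ decay like $\langle l - l'\rangle^{-D}$ by~\eqref{eq:MismatchSpace}, and the off-diagonal terms in $M$ versus the intrinsic frequency of $f$ decay like $M^{-D}$ by~\eqref{eq:MismatchFrequencyHH} and~\eqref{eq:MismatchFrequencyHL}. After applying Young's (convolution) inequality in the $l$-variable to absorb the $\langle l-l'\rangle^{-D}$ weight, and summing the rapidly decaying off-diagonal frequency contributions, what remains is essentially $\sum_M M^{2s}\sum_{l'}\|\tilde P_M(\ph_{l'} f)\|_{L^{p'}}^2 \sim \sum_M M^{2s}\|\tilde P_M f\|_{L^{p'}}^2$ (using $p' \le 2$ together with $\ell^2$-summability of the $\ph_{l'}$-pieces, i.e.\ that $\sum_{l'} \|\ph_{l'} g\|_{L^{p'}}^2 \lesssim \|g\|_{L^{p'}}^2$ for $p' \le 2$, which follows since $\ell^{p'} \hookrightarrow \ell^2$ and the supports are essentially disjoint). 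This is bounded by $\|f\|_{\dot B^s_{p',2}}^2 \lesssim \|f\|_{H^s}^2$ by Bernstein/Sobolev embedding ($p' \le 2$) on the dyadic pieces, handling low frequencies by the $P_{>4}$ restriction. This is basically the argument of~\cite[Corollary~3.3]{SSchr2021} with the minor modification that one keeps $\langle M\rangle^s$ rather than $M^s$ to cover $s<0$ cleanly; no new difficulty arises.

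\emph{Part (ii).} Here the extra operator $Q_{\kappa,\lambda}$ must be inserted, and the point is that summing over $\lambda \in \N$ with $\lambda \ge \kappa+1$ costs nothing. The crucial observation is that for fixed $M$, the operators $\{Q_{\kappa,\lambda}\}_\lambda$ form a (finitely overlapping) partition of the frequency annulus $|\xi| \sim M$: a function frequency-localized to $|\xi|\sim M$ only meets $O(M/\kappa)$ values of $\lambda$, and the $\zeta_{\kappa,\lambda}$ sum to $1$ there. So I would proceed as in Part (i), but at the stage where I have reduced to estimating dyadic pieces $g_M := \tilde P_M(\ph_{l'} f)$, I now need
\begin{align*}
	\sum_{\lambda \ge \kappa+1} \|Q_{\kappa,\lambda} P_M g\|_{L^{p'}_x}^2 \lesssim \|P_M g\|_{L^{p'}_x}^2.
\end{align*}
For $p' \le 2$ this follows from the finite-overlap property of the symbols $\zeta_{\kappa,\lambda}$ together with $\ell^{p'} \hookrightarrow \ell^2$: since each $Q_{\kappa,\lambda}$ is a Fourier multiplier with uniformly (in $\lambda$) bounded symbol derivatives after rescaling, it is bounded on $L^{p'}$ uniformly in $\lambda$, and only $O(M)$ terms are nonzero, but one must be careful to extract $\ell^2$-summability rather than merely a bound by $M^{1-2/p'}\|P_Mg\|^2$. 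The clean way is to use that $\sum_\lambda \zeta_{\kappa,\lambda}^2$ is comparable to a single bump adapted to $|\xi|\sim M$ (finite overlap), hence by a Littlewood–Paley-type square function estimate in the radial frequency variable (or simply by duality and Plancherel when $p'=2$, then interpolation/embedding for $p'<2$), $\big\|(\sum_\lambda |Q_{\kappa,\lambda} P_M g|^2)^{1/2}\big\|_{L^{p'}} \lesssim \|P_M g\|_{L^{p'}}$; pulling the $\ell^2$-norm out of $L^{p'}$ using $p'\le 2$ (Minkowski) gives exactly the displayed inequality. Once this is in hand, Part (ii) reduces verbatim to Part (i) by inserting this estimate after all the mismatch summations.

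\emph{Main obstacle.} The genuinely delicate point is the $\ell^2_\lambda$-summability of the $Q_{\kappa,\lambda}$-pieces in $L^{p'}$ without losing the number-of-annuli factor $M/\kappa$ — i.e.\ establishing the square-function bound $\big\|(\sum_{\lambda}|Q_{\kappa,\lambda}h|^2)^{1/2}\big\|_{L^{p'}} \lesssim \|h\|_{L^{p'}}$ for $p'\in(1,2]$ with constants uniform in $\kappa\ge 1$ and in the dyadic scale of $h$. This is a standard consequence of the Marcinkiewicz / Littlewood–Paley theorem applied to the one-parameter family of radial-frequency projections (after a rescaling that normalizes $\kappa$ to a unit scale), but it does require checking the Hörmander–Mikhlin condition for the relevant vector-valued multiplier uniformly; I would cite a vector-valued Mikhlin multiplier theorem rather than reprove it. Everything else — the mismatch estimates, the convolution-in-$l$ summation, the Bernstein embedding at the end — is routine and parallels~\cite{SSchr2021} and the already-cited Lemma~\ref{lem:MismatchEstimates}.
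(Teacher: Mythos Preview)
Your approach has genuine gaps in both parts.

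\textbf{Part (i).} The final step is wrong: the embedding $\|f\|_{\dot B^s_{p',2}} \lesssim \|f\|_{H^s}$ fails for $p'<2$. Bernstein's inequality goes the other way --- for a function at frequency $\sim M$ one has $\|P_M f\|_{L^2} \lesssim M^{d(1/p'-1/2)}\|P_M f\|_{L^{p'}}$, not the reverse --- and there is no global embedding $L^2\hookrightarrow L^{p'}$ on $\R^d$. The correct move (which the paper makes) is to use the bounded support of $\ph_l$ \emph{before} summing in $l$: H\"older on $\supp\ph_l$ gives $\|\ph_l \tilde P_M f\|_{L^{p'}}\lesssim \|\ph_l \tilde P_M f\|_{L^2}$, and then $\sum_l\|\ph_l \tilde P_M f\|_{L^2}^2\lesssim\|\tilde P_M f\|_{L^2}^2$ by almost disjoint supports. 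You have the right ingredients but combine them in the wrong order; once you pass to $\|\tilde P_M f\|_{L^{p'}}$ you have lost the localization that makes the $L^{p'}\to L^2$ passage possible.

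\textbf{Part (ii).} The square-function estimate you propose, $\big\|(\sum_\lambda|Q_{\kappa,\lambda}h|^2)^{1/2}\big\|_{L^{p'}}\lesssim\|h\|_{L^{p'}}$ uniformly in the scale of $h$, is \emph{false} for $p'<2$, already in one dimension. Take $\hat h=\mathbf 1_{[0,N]}$: then $\|h\|_{L^{p'}}\sim N^{1/p}$ while each $Q_{1,\lambda}h$ with $1\le\lambda\le N$ satisfies $\|Q_{1,\lambda}h\|_{L^{p'}}\sim 1$, giving $\sum_\lambda\|Q_{1,\lambda}h\|_{L^{p'}}^2\sim N\gg N^{2/p}$ when $p'<2$. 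The vector-valued Mikhlin route you suggest also fails, because $\partial^\alpha\zeta_{\kappa,\lambda}(\xi)$ is bounded rather than $O(|\xi|^{-|\alpha|})$: the symbol is a unit-scale bump at radius $\lambda\sim M$, so the H\"ormander--Mikhlin condition misses by a factor $M^{|\alpha|}$. This is precisely the obstacle you flagged as ``genuinely delicate,'' and it is not resolved by a standard citation. The paper's workaround is to reintroduce physical-space localization: write $Q_{\kappa,\lambda}P_M(\ph_l\tilde P_M f)=\sum_{l'}\ph_{l'}Q_{\kappa,\lambda}P_M(\ph_l\tilde P_M f)$; for the diagonal piece $l'=l$ one passes to $L^2$ via H\"older on $\supp\ph_l$, where Plancherel makes the $\ell^2_\lambda$ sum trivial; for $l'\neq l$ one integrates by parts on the Fourier side against the oscillation $e^{-\imu(l-l')\eta}$ to gain $|l-l'|^{-n}$, and the key point is that each integration by parts produces at most one derivative on $\zeta_{\kappa,\lambda}$ that is uniformly bounded (rather than growing in $\lambda$). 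The mismatched-frequency pieces are handled by a crude Mikhlin bound on $Q_{\kappa,\lambda}$ (polynomial loss in $\lambda$) absorbed by the rapid $M^{-D}$ decay from Lemma~\ref{lem:MismatchEstimates}.
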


\begin{proof}
	\ref{it:HsMismatchLP} Writing $l^2_{M \geq 4}$ for the $l^2$-norm over those $M \in 2^\Z$ with $M \geq 4$, we first note that
	\begin{align}
	\label{eq:EstMismatchFirstReduction}
		\|\langle M \rangle^s P_{>4} P_M(\ph_l f)\|_{l^2_M l^2_l L^{p'}_x} \lesssim \|\langle M \rangle^s  P_M(\ph_l f)\|_{l^2_{M\geq 4} l^2_l L^{p'}_x}.
	\end{align}
	We then estimate
	\begin{align}
		\| P_M(\ph_l f)\|_{l^2_l L^{p'}_x} &\leq \| P_M(\ph_l P_{\leq \min\{1, 2^{-5}M\}} f)\|_{l^2_l L^{p'}_x} + \| P_M(\ph_l \tilde{P}_M f)\|_{l^2_l L^{p'}_x} \nonumber\\
		&\qquad + \sum_{\substack{N \in 2^\N \\|\log_2(M/N)| \geq 5}}\| P_M(\ph_l P_N f)\|_{l^2_l L^{p'}_x}. \label{eq:MismatchSplitting}
	\end{align}
	Treating the second summand on the right-hand side first and using $|\supp \ph_l| \lesssim 1$ for all $l \in \Z^d$ combined with H{\"o}lder's inequality, we get
	\begin{equation}
		\label{eq:MismatchFreqComp}
		\| P_M(\ph_l \tilde{P}_M f)\|_{l^2_l L^{p'}_x} \lesssim \| \ph_l \tilde{P}_M f\|_{l^2_l L^{p'}_x} \lesssim \|\ph_l \tilde{P}_M f\|_{l^2_l L^{2}_x} \lesssim \|\tilde{P}_M f\|_{L^2_x}.
	\end{equation}
	To estimate the sum on the right-hand side of~\eqref{eq:MismatchSplitting}, we first define the thickened localizers in  physical space $\tilde{\ph}_l = \sum_{m \in \Z^d,|m-l| \leq 2 \sqrt{d}} \ph_m$. Then $\tilde{\ph}_l = 1$ on the support of $\ph_l$ for all $l \in \Z^d$ and we infer
	\begin{align*}
		\| P_M(\ph_l P_N f)\|_{ L^{p'}_x} 
		&\leq \sum_{l' \in \Z^d} \|P_M(\ph_l P_N (\ph_{l'} \tilde{P}_N f) \|_{L^{p'}_x} \\
		&\lesssim \sum_{l' \in \Z^d} \|P_M(\ph_l P_N (\ph_{l'} \tilde{P}_N f)) \|_{L^{p'}_x}^{\frac{1}{2}} \|\ph_l P_N (\ph_{l'} \tilde{P}_N f)\|_{L^{p'}_x}^{\frac{1}{2}} \\
		&\lesssim \sum_{l' \in \Z^d} M^{-D} N^{-D} \langle l - l' \rangle^{-2d-2} \| \tilde{\ph}_{l'} \tilde{P}_N f\|_{L^{p'}_x}
	\end{align*}
	for all $N \in 2^\N$ with $|\log_2(M/N)| \geq 5$, $l \in \Z^d$ and a constant $D > |s|$, where we applied Lemma~\ref{lem:MismatchEstimates} in the last step. Since $\langle x - l \rangle \lesssim \langle l' - l\rangle$ for all $x \in \supp \tilde{\ph}_{l'}$ and $|\supp \tilde{\ph}_{l'}| \lesssim 1$ for all $l' \in \Z^d$, we thus obtain
	\begin{align*}
		\| P_M(\ph_l P_N f)\|_{L^{p'}_x} 
		&\lesssim \sum_{l' \in \Z^d} M^{-D} N^{-D} \langle l - l' \rangle^{-2d-2} \|\tilde{\ph}_{l'} \tilde{P}_N f\|_{L^2_x} \\
		&\lesssim  \sum_{l' \in \Z^d} M^{-D} N^{-D} \langle l - l' \rangle^{-d-1} \| \langle x - l \rangle^{-d-1} \tilde{P}_N f\|_{L^2_x} \\
		&\lesssim M^{-D} N^{-D} \| \langle x - l \rangle^{-d-1} \tilde{P}_N f\|_{L^2_x}.
	\end{align*}
	We next take the $l^2$-norm in $l$ and sum over $N \in 2^\N$ with $|\log_2(M/N)| \geq 5$, which yields
	\begin{align}
		\sum_{\substack{N \in 2^\N \\|\log_2(M/N)| \geq 5}}\| P_M(\ph_l P_N f)\|_{l^2_l L^{p'}_x} &\lesssim \sum_{\substack{N \in 2^\N \\|\log_2(M/N)| \geq 5}} M^{-D} N^{-D - s} N^{s}\| \tilde{P}_N f\|_{ L^{2}_x} \nonumber\\
		& \lesssim M^{-D} \|f\|_{H^s} \label{eq:EstMismatchFinalDifferentFrequencies}
	\end{align}	
	as $D - |s| > 0$.	If $M \geq 2^5$, we argue analogously for the first term on the right-hand side of~\eqref{eq:MismatchSplitting}, which yields
	\begin{equation}
		\label{eq:EstMismatchLowFrequency}
		\| P_M(\ph_l P_{\leq 1} f)\|_{l^2_l L^{p'}_x} \lesssim M^{-D} \|P_{\leq 2} f\|_{L^2_x} \lesssim M^{-D} \|f\|_{H^s}.
	\end{equation}
	If $M \in 2^\N$ with $4 \leq M \leq 2^4$, we simply estimate
	\begin{equation}
		\label{eq:EstMismatchLowLowFrequency}
		\| P_M(\ph_l P_{\leq 2^{-5}M} f)\|_{l^2_l L^{p'}_x} \lesssim \|P_{\leq  2^{-5}M} f\|_{L^2_x} \lesssim \|f\|_{H^s} \lesssim M^{-D} \|f\|_{H^s}.
	\end{equation}
	Inserting~\eqref{eq:MismatchFreqComp}, \eqref{eq:EstMismatchFinalDifferentFrequencies}, \eqref{eq:EstMismatchLowFrequency} and~\eqref{eq:EstMismatchLowLowFrequency} into~\eqref{eq:MismatchSplitting} and~\eqref{eq:EstMismatchFirstReduction}, we finally arrive at
	\begin{align*}
		\|\langle M \rangle^s P_{>4} P_M(\ph_l f)\|_{l^2_M l^2_l L^{p'}_x} 
		\lesssim \| \langle M \rangle^s \|\tilde{P}_M f\|_{L^2} + \langle M \rangle^s M^{-D} \|f\|_{H^s}\|_{l^2_{M \geq 4}} \lesssim \|f\|_{H^s}.
	\end{align*}
	
	\ref{it:HsMismatchLPAndAn} We first note that for every $\alpha \in \N_0^d$ there is a constant $C_\alpha$ such that
	\begin{align*}
		|\partial^\alpha \zeta_{\kappa,\lambda}(\xi)| \leq C_\alpha \lambda^{|\alpha|} |\xi|^{-|\alpha|}
	\end{align*}
	for all $\xi \in \R^d$ and $\lambda \geq \kappa + 1$, since 
	\begin{align*}
		\zeta_{\kappa,\lambda} = \zeta_{\kappa}^0(|\xi| - \lambda) \quad \text{and} \quad \supp \zeta_{\kappa,\lambda} \subseteq \{\xi \in \R^d \colon \lambda - \kappa \leq |\xi| \leq \lambda + \kappa \}
	\end{align*}		
	 for all $\lambda \in \N$. The Mihlin multiplier theorem thus shows that for every $q \in (1,\infty)$ we have
	 \begin{equation}
	 	\label{eq:EstMultiplierBoundQkl}
	 		\|Q_{\kappa,\lambda} f\|_{L^q} \lesssim \lambda^{\lfloor \frac{d}{2}\rfloor + 1} \|f\|_{L^q}
	 \end{equation}
	 for all $\lambda \geq \kappa + 1$, where the implicit constant is independent of $\lambda$. Using the same notation as in part~\ref{it:HsMismatchLP}, we first note that
	 \begin{equation}
	 	\label{eq:EstMismatch2FirstReduction}
	 	\|\langle M \rangle^s Q_{\kappa,\lambda} P_{>4} P_M(\ph_l f)\|_{l^2_M l^2_l l^2_{\lambda \geq \kappa + 1} L^{p'}_x} \lesssim \|\langle M \rangle^s Q_{\kappa,\lambda} P_M(\ph_l f)\|_{l^2_{M \geq 4}  l^2_{\lambda \geq \kappa + 1} l^2_l L^{p'}_x}.
	 \end{equation}
	 Proceeding as in~(i), we estimate
	 \begin{align}
	 	\label{eq:EstMismatch2Splitting}
	 	&\|\langle M \rangle^s Q_{\kappa,\lambda} P_M(\ph_l f)\|_{l^2_{\lambda \geq \kappa + 1} l^2_l L^{p'}_x} \leq \|\langle M \rangle^s Q_{\kappa,\lambda} P_M(\ph_l P_{\leq \min\{1,2^{-5}M\}} f)\|_{l^2_{\lambda \sim M} l^2_l L^{p'}_x}  \nonumber\\
	 	&\qquad + \|\langle M \rangle^s Q_{\kappa,\lambda} P_M(\ph_l \tilde{P}_M f)\|_{l^2_{\lambda \sim M} l^2_l L^{p'}_x} \nonumber \\
	 	&\qquad + \sum_{\substack{N \in 2^\N \\|\log_2(M/N)| \geq 5}}\|\langle M \rangle^s Q_{\kappa,\lambda} P_M(\ph_l P_N f)\|_{l^2_{\lambda \sim M} l^2_l L^{p'}_x}, 
	 \end{align}
	 where $l^2_{\lambda \sim M}$ means the $l^2$-norm over those $\lambda \in \N$ with $\lambda \geq \kappa + 1$ which satisfy  $\frac{M}{2} \leq \lambda + \kappa$ and $\lambda - \kappa \leq 2M$, using that $Q_{\kappa,\lambda} P_M$ is zero for all other $\lambda$. We thus obtain from~\eqref{eq:EstMultiplierBoundQkl} and~\eqref{eq:EstMismatchFinalDifferentFrequencies} with $D > \lfloor \frac{d}{2}\rfloor + 2 + |s|$ that
	 \begin{align}
	 	&\sum_{\substack{N \in 2^\N \\|\log_2(M/N)| \geq 5}}\|\langle M \rangle^s Q_{\kappa,\lambda} P_M(\ph_l P_N f)\|_{l^2_{\lambda \sim M} l^2_l L^{p'}_x} \nonumber \\
	 	&\lesssim  \sum_{\substack{N \in 2^\N \\|\log_2(M/N)| \geq 5}}  \|\langle M \rangle^s \lambda^{\lfloor \frac{d}{2}\rfloor + 1} P_M(\ph_l P_N f)\|_{l^2_{\lambda \sim M} l^2_l L^{p'}_x} \nonumber\\
	 	&\lesssim \|\langle M \rangle^s M^{\lfloor \frac{d}{2}\rfloor + 1} M^{-D} \|f\|_{H^s}\|_{l^2_{\lambda \sim M}} \lesssim M^{\lfloor \frac{d}{2} \rfloor + \frac{3}{2} + |s| -D} \|f\|_{H^s}. \label{eq:EstMismatch2FinalDifferentFrequencies}
	 \end{align}
	Employing~\eqref{eq:EstMismatchLowFrequency} respectively~\eqref{eq:EstMismatchLowLowFrequency} instead of~\eqref{eq:EstMismatchFinalDifferentFrequencies}, we obtain in the same way
	\begin{equation}
		\label{eq:EstMismatch2LowFrequency}
		\|\langle M \rangle^s Q_{\kappa,\lambda} P_M(\ph_l P_{\leq \min\{1,2^{-5}M\}} f)\|_{l^2_{\lambda \sim M} l^2_l L^{p'}_x} \lesssim M^{\lfloor \frac{d}{2} \rfloor + \frac{3}{2} + |s| -D} \|f\|_{H^s}.
	\end{equation}
	It remains to estimate the second term on the right-hand side of~\eqref{eq:EstMismatch2Splitting}. Here we first note that
	\begin{align}
		 &\| Q_{\kappa,\lambda} P_M(\ph_l \tilde{P}_M f)\|_{l^2_{\lambda \sim M}  L^{p'}_x} \nonumber \\
		 & \leq \|\ph_l Q_{\kappa,\lambda} P_M(\ph_l \tilde{P}_M f)\|_{l^2_{\lambda \sim M}  L^{p'}_x} + \sum_{l' \in \Z^d, l' \neq l} \| \ph_{l'} Q_{\kappa,\lambda} P_M(\ph_l \tilde{P}_M f)\|_{l^2_{\lambda \sim M}  L^{p'}_x} \nonumber \\
		 &\lesssim \|\ph_l Q_{\kappa,\lambda} P_M(\ph_l \tilde{P}_M f)\|_{l^2_{\lambda \sim M}  L^{2}_x} + \sum_{l' \in \Z^d, l' \neq l} \| \ph_{l'} Q_{\kappa,\lambda} P_M(\ph_l \tilde{P}_M f)\|_{l^2_{\lambda \sim M}  L^{2}_x}, \label{eq:EstMismatch2PartitionPhysicalSpace}
	\end{align}
	where we used that $(\ph_l)_{l \in \Z^d}$ is a partition of unity with $|\supp \ph_l| \lesssim 1$ for all $l \in \Z^d$. For the first term we infer 
	\begin{align}
	\label{eq:EstMismatch2FirstTermL2}
		\|\ph_l Q_{\kappa,\lambda} P_M(\ph_l \tilde{P}_M f)\|_{l^2_{\lambda \sim M}  L^{2}_x} &\lesssim \| Q_{\kappa,\lambda} P_M(\ph_l \tilde{P}_M f)\|_{ L^{2}_x l^2_{\lambda \sim M} } \lesssim  \|P_M(\ph_l \tilde{P}_M f)\|_{ L^{2}_x} \nonumber\\
		&\lesssim  \| \ph_l \tilde{P}_M f\|_{L^{2}_x},
	\end{align}
	exploiting that $(\zeta_{\kappa,\lambda})_{\lambda \in \Z}$ is a partition of unity on $\R^d$. To treat the sum on the right-hand side of~\eqref{eq:EstMismatch2PartitionPhysicalSpace}, we compute
	\begin{align*}
		\cF(\ph_{l'} Q_{\kappa,\lambda} P_M(\ph_l \tilde{P}_M f))(\xi) = \int_{\R^d} \!\hat{\ph}_{l'}(\xi - \eta) \zeta_{\kappa,\lambda}(\eta) \chi_{M}(\eta) \int_{\R^d} \! \hat{\ph}_l(\eta - \nu) \hat{g}_{l,M}(\nu) \dd \nu \dd \eta
	\end{align*}
	for all $\xi \in \R^d$, where $g_{l,M} = \tilde{\ph}_l \tilde{P}_M f$. We recall that $\tilde{\ph}_l = \sum_{m \in \Z^d, |l - m| \leq 2\sqrt{d}} \ph_m$ equals $1$ on the support of $\ph_l$. Employing the definition of the $\ph_l$, we thus obtain
	\begin{align*}
		&\cF(\ph_{l'} Q_{\kappa,\lambda} P_M(\ph_l \tilde{P}_M f))(\xi) \\
		&= \int_{\R^d}  \int_{\R^d} e^{- \imu l' (\xi - \eta)}\hat{\ph}_0(\xi - \eta) \zeta_{\kappa,\lambda}(\eta) \chi_{M}(\eta) e^{- \imu l (\eta - \nu)}\hat{\ph}_0(\eta - \nu) \hat{g}_{l,M}(\nu) \dd \nu \dd \eta \\
		&= \int_{\R^d}  \int_{\R^d} e^{- \imu l' \xi }\hat{\ph}_0(\xi - \eta) \zeta_{\kappa,\lambda}(\eta) \chi_{M}(\eta) \hat{\ph}_0(\eta - \nu) e^{- \imu (l-l') \eta} e^{\imu l \nu} \hat{g}_{l,M}(\nu) \dd \nu \dd \eta
	\end{align*}
	for all $\xi \in \R^d$. Take $n \in \N$. Applying the identity
	\begin{align*}
		e^{- \imu (l-l') \eta} = \frac{\imu (l - l')}{|l - l'|^2} \cdot \nabla_\eta e^{- \imu (l - l') \eta} 
	\end{align*}
	$n$ times and integrating by parts in $\eta$ for $l \neq l'$, we infer that there are constants $C_{\alpha_1, \ldots, \alpha_4} \in \N$ for all $\alpha_1, \ldots, \alpha_4 \in \N_0^d$ with $|\sum_{i = 1}^4 \alpha_i| = n$ such that
	\begin{align*}
		&\cF(\ph_{l'} Q_{\kappa,\lambda} P_M(\ph_l \tilde{P}_M f))(\xi) = \sum_{\substack{\alpha_1, \ldots, \alpha_4 \in \N_0^d \\ |\sum_{i = 1}^n \alpha_i| = n}} C_{\alpha_1, \ldots, \alpha_4} \frac{(-\imu)^n}{|l - l'|^{2n}} \cdot \\
		&\quad \cdot \int_{\R^d}  \int_{\R^d} e^{- \imu l' \xi }(l-l')^{\alpha_1} \partial^{\alpha_1}_\eta \hat{\ph}_0(\xi - \eta) (l-l')^{\alpha_2} \partial^{\alpha_2}\zeta_{\kappa,\lambda}(\eta) (l-l')^{\alpha_3} \partial^{\alpha_3}\chi_{M}(\eta) \cdot \\
		&\hspace{6em} \cdot (l-l')^{\alpha_4} \partial^{\alpha_4}_\eta\hat{\ph}_0(\eta - \nu) e^{- \imu (l-l') \eta} e^{\imu l \nu} \hat{g}_{l,M}(\nu) \dd \nu \dd \eta
	\end{align*}
	for all $\xi \in \R^d$. Choosing $n = d+1$, we thus deduce via Plancherel's theorem and Young's inequality that
	\begin{align*}
		&\sum_{l' \in \Z^d, l' \neq l} \| \ph_{l'} Q_{\kappa,\lambda} P_M(\ph_l \tilde{P}_M f)\|_{l^2_{\lambda \sim M}  L^{2}_x} \nonumber\\
		&\lesssim \sum_{l' \in \Z^d, l' \neq l} \sum_{\substack{\alpha_1, \ldots, \alpha_4 \in \N_0^d \\ |\sum_{i = 1}^n \alpha_i| = n}} \frac{1}{|l - l'|^{n}} \Big\|\int_{\R^d}  \int_{\R^d} |\partial^{\alpha_1} \hat{\ph}_0(\xi - \eta)|\cdot |\partial^{\alpha_2}\zeta_{\kappa,\lambda}(\eta)|\cdot | \partial^{\alpha_3}\chi_{M}(\eta)|\cdot \nonumber \\
		&\hspace{16em} \cdot |\partial^{\alpha_4}\hat{\ph}_0(\eta - \nu)||\hat{g}_{l,M}(\nu)| \dd \nu \dd \eta \Big\|_{l^2_{\lambda \sim M}  L^{2}_{\xi}} \nonumber \\
		&\lesssim  \sum_{\substack{\alpha_1, \ldots, \alpha_4 \in \N_0^d \\ |\sum_{i = 1}^n \alpha_i| = n}} \| \partial^{\alpha_1} \hat{\ph}_0\|_{L^1} \Big\|\int_{\R^d}|\partial^{\alpha_2}\zeta_{\kappa,\lambda}(\eta)|\cdot | \partial^{\alpha_3}\chi_{M}(\eta)|\cdot \nonumber \\
		&\hspace{16em} \cdot |\partial^{\alpha_4}\hat{\ph}_0(\eta - \nu)| \cdot |\hat{g}_{l,M}(\nu)| \dd \nu \Big\|_{l^2_{\lambda \sim M}  L^{2}_{\eta}}.
	\end{align*}
	We next observe that for every $\alpha_2 \in \N_0^d$ there is a constant $C_{\alpha_2}$ such that
	\begin{align*}
		|\partial^{\alpha_2} \zeta_{\kappa,\lambda}(\eta)| \leq C_{\alpha_2}
	\end{align*}
	for all $\eta \in \R^d$ and $\lambda \geq \kappa + 1$ since $\zeta_{\kappa,\lambda}(\eta) = \zeta_{\kappa}^0(|\eta| - \lambda)$ for all $\lambda \in \N$. Moreover, for every $\alpha_3 \in \N_0^d$ there is a constant $C_{\alpha_3}$ such that $\|\partial^{\alpha_3} \chi_M\|_{L^\infty} \leq C_{\alpha_3}$ for all $M \geq 4$.
	Since $\hat{\ph}_0$ is a Schwartz function and only finitely many $\zeta_{\kappa,\lambda}$ have overlapping support, we therefore get
	\begin{align}
		&\sum_{l' \in \Z^d, l' \neq l} \| \ph_{l'} Q_{\kappa,\lambda} P_M(\ph_l \tilde{P}_M f)\|_{l^2_{\lambda \sim M}  L^{2}_x} \nonumber\\
		&\lesssim  \sum_{\substack{\alpha_1, \ldots, \alpha_4 \in \N_0^d \\ |\sum_{i = 1}^n \alpha_i| = n}}  \Big\|\int_{\R^d}\|\partial^{\alpha_2}\zeta_{\kappa,\lambda}(\eta)\|_{l^2_{\lambda \sim M}} | \partial^{\alpha_3}\chi_{M}(\eta)| \!\cdot \! |\partial^{\alpha_4}\hat{\ph}_0(\eta - \nu)|\! \cdot \!|\hat{g}_{l,M}(\nu)| \dd \nu \dd \eta \Big\|_{L^{2}_{\eta}} \nonumber\\
		&\lesssim  \sum_{\substack{\alpha_1, \ldots, \alpha_4 \in \N_0^d \\ |\sum_{i = 1}^n \alpha_i| = n}}  \Big\|\int_{\R^d} |\partial^{\alpha_4}\hat{\ph}_0(\eta - \nu)| \!\cdot \!|\hat{g}_{l,M}(\nu)| \dd \nu  \Big\|_{L^{2}_{\eta}} \nonumber \\
		&\lesssim  \sum_{\substack{\alpha_1, \ldots, \alpha_4 \in \N_0^d \\ |\sum_{i = 1}^n \alpha_i| = n}}  \|\partial^{\alpha_4}\hat{\ph}_0\|_{L^1} \|\hat{g}_{l,M}\|_{L^2} \lesssim \|g_{l,M}\|_{L^2}. \label{eq:EstMismatch2SecondTermL2}
	\end{align}
	Recalling that $g_{l,M} = \tilde{\ph}_l \tilde{P}_M f$ and inserting~\eqref{eq:EstMismatch2SecondTermL2} and~\eqref{eq:EstMismatch2FirstTermL2} into~\eqref{eq:EstMismatch2PartitionPhysicalSpace}, we obtain
	\begin{align*}
		\| Q_{\kappa,\lambda} P_M(\ph_l \tilde{P}_M f)\|_{l^2_{\lambda \sim M}  L^{p'}_x} \lesssim  \|\tilde{\ph}_l \tilde{P}_M f\|_{L^2_x}.
	\end{align*}
	Consequently,
	\begin{align}
		 \|\langle M \rangle^s Q_{\kappa,\lambda} P_M(\ph_l \tilde{P}_M f)\|_{l^2_{\lambda \sim M} l^2_l L^{p'}_x} &\lesssim  \|\langle M \rangle^s \tilde{\ph}_l \tilde{P}_M f\|_{l^2_l L^{2}_x} \lesssim \|\langle M \rangle^s \tilde{P}_M f\|_{L^{2}_x}. \label{eq:EstMismatch2CompFrequencyFinal}
	\end{align}
	Finally, we insert~\eqref{eq:EstMismatch2CompFrequencyFinal}, \eqref{eq:EstMismatch2LowFrequency}, and~\eqref{eq:EstMismatch2FinalDifferentFrequencies} into~\eqref{eq:EstMismatch2Splitting} and take the $l^2_{M \geq 4}$-norm to conclude
	\begin{align*}
		\|\langle M \rangle^s Q_{\kappa,\lambda} P_M(\ph_l f)\|_{l^2_{M \geq 4}  l^2_{\lambda \geq \kappa + 1} l^2_l L^{p'}_x} \lesssim \|f\|_{H^s}.
	\end{align*}
	In view of~\eqref{eq:EstMismatch2FirstReduction}, this proves the assertion.
\end{proof}

%%%%%%%%%%%%%%%%%%%%%%%%%%%%%%%%%%%%%%%%%%%%%%%%%%%%%%%%%%%%%%%%%%%%%%%%%%%%%%%%%%%%%%%%%%%%%%%%%%%%%%%%%
%%%%%%%%%%%%%%%%%%%%%%%%%%%%%%%%%%%%%%%%%%%%%%%%%%%%%%%%%%%%%%%%%%%%%%%%%%%%%%%%%%%%%%%%%%%%%%%%%%%%%%%%%
%%%%%%%%%%%%%%%%%%%%%%%%%%%%%%%%%%%%%%%%%%%%%%%%%%%%%%%%%%%%%%%%%%%%%%%%%%%%%%%%%%%%%%%%%%%%%%%%%%%%%%%%%
%%%%%%%%%%%%%%%%%%%%%%%%%%%%%%%%%%%%%%%%%%%%%%%%%%%%%%%%%%%%%%%%%%%%%%%%%%%%%%%%%%%%%%%%%%%%%%%%%%%%%%%%%
%%%%%%%%%%%%%%%%%%%%%%%%%%%%%%%%%%%%%%%%%%%%%%%%%%%%%%%%%%%%%%%%%%%%%%%%%%%%%%%%%%%%%%%%%%%%%%%%%%%%%%%%%
We finally turn to the probabilistic space-time estimates for linear solutions of the wave equation with randomized data. We start by fixing some notation which we will use in the proofs of both Proposition~\ref{prop:ProbabilisticEstimateReg} and Proposition~\ref{prop:ProbabilisticEstimateDecay}.

Recall from the definition of the randomization $f^\om$ in~\eqref{eq:DefRandomization} that
	\begin{equation}
	\label{eq:DeffMom}
		f^\om = \sum_{M \in 2^\Z} f^{M,\om},
	\end{equation}
	where
	\begin{equation}
	\label{eq:SplittingfMom}
		f^{M,\om} = \sum_{i,j \in \Z^d} \sum_{k = 0}^\infty \sum_{l = 1}^{N_k} X^M_{i,j,k,l}(\om) P_j f^M_{i,k,l}
	\end{equation}
	and
	\begin{equation}
	\label{eq:DeffMikl}
		f^M_{i,k,l}(r \theta) = a_k M^{-\frac{d-2}{2}} r^{-\frac{d-2}{2}} b_{k,l}(\theta) \int_0^\infty \hat{c}^{M,i}_{k,l}(\rho) J_{\frac{d+2k-2}{2}}(M r \rho) \rho^{\frac{d}{2}} \dd \rho.
	\end{equation}
	Moreover, we know from~\eqref{eq:DecompSphHarm} that
	\begin{equation}
	\label{eq:SumfMikl}
		P_M(\ph_i f) = \sum_{k = 0}^\infty \sum_{l = 1}^{N_k} f^M_{i,k,l}.
	\end{equation}

The next proposition provides us with a set of Strichartz estimates for solutions of the linear (half)-wave equation with randomized data which is very flexible in applications due to its wide range of admissible exponents. This range is restricted on one side by the range of exponents of deterministic Strichartz estimates for the radial wave equation but allows for arbitrary high integrability in the radial and the angular variable. We note that in this work we do not need the additional flexibility in the angular variable, i.e. we only apply the proposition with $\mu = p$, but as there is no difference in the proof we provide the more general statement.

In the proof we exploit the randomization in frequency space and in the angular variable. The randomization in the angular variable gives access to Strichartz estimates in the range of the radial wave equation while the randomization in frequency space allows us to move from higher to lower spatial integrability without losing derivatives. However, we cannot simply apply the unit-scale Bernstein inequality as it is typically done in the case of the Wiener randomization since we would lose summability due to the different decompositions. We overcome this problem by applying the square function estimate from~\cite[Lemma~2.3]{SSchr2021} instead. Moreover, it is important to note that the additional decomposition in physical space does not impair the advantages of the randomization in frequency space and in the angular variable.
\begin{prop}
	\label{prop:ProbabilisticEstimateReg}
	Let $d \geq 2$. Let $s \in \R$ and $q, p_0 \in [2,\infty)$ such that $(q,p_0)$ satisfies
	\begin{align*}
		\frac{1}{q} + \frac{d-1}{p_0} < \frac{d-1}{2}.
	\end{align*}
	Take $f \in H^s(\R^d)$ and let $f^\om$ be its randomization from~\eqref{eq:DefRandomization}. Then there exists a constant $C > 0$ such that
	\begin{align*}
		 \|e^{\pm \imu t |\nabla|} P_{> 4} f^\om\|_{L^\beta_\om \dot{B}^{s + \frac{1}{q} + \frac{d}{p_0} - \frac{d}{2}}_{q,(p,\mu),2}} \leq C \sqrt{\beta} \|f\|_{H^s}
	\end{align*}
	for all $\beta \in [1,\infty)$, $p \in [p_0, \infty)$, and $\mu \in [2,\infty)$.
\end{prop}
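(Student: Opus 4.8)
\emph{Proof strategy.} The plan is to strip off the randomness with Khintchine's inequality, reduce to a square-function estimate for the deterministic atoms $P_jf^M_{i,k,l}$ of the randomisation, and then exploit the angular structure of these atoms through the operators $T_1^{\frac{d+2k-2}{2}}$ of Lemma~\ref{lem:EstimateTkp11}, while carefully tracking how the three decompositions interact. \emph{Step 1 (reduction via Khintchine).} Since $(\Omega,\cA,\PP)$ is a probability space, $\|\cdot\|_{L^\beta_\om}$ is nondecreasing in $\beta$, so I would first treat $\beta\ge\max\{2,q,p,\mu\}$ and deduce the remaining range for free (with $\sqrt\beta$ replaced by a fixed constant $\le C\sqrt\beta$). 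For such $\beta$, Minkowski's inequality moves the $L^\beta_\om$-norm inside the $\ell^2_N$-sum defining the Besov norm and inside $L^q_t\cL^p_rL^\mu_\theta$. Using the decomposition $f^\om=\sum_Mf^{M,\om}$ from~\eqref{eq:DeffMom}--\eqref{eq:DeffMikl} and the Khintchine-type bound $\|\sum_nc_nX_n\|_{L^\beta_\om}\lesssim\sqrt\beta\,(\sum_n|c_n|^2)^{1/2}$ afforded by the exponential-moment assumption on the $X^M_{i,j,k,l}$, this bounds the left-hand side by
\begin{align*}
 \sqrt\beta\Big(\sum_{N}N^{2\sigma}\Big\|\Big(\sum_{M,i,j,k,l}\big|P_Ne^{\pm\imu t|\nabla|}P_{>4}P_jf^M_{i,k,l}\big|^2\Big)^{1/2}\Big\|_{L^q_t\cL^p_rL^\mu_\theta}^2\Big)^{1/2},
\end{align*}
where $\sigma=s+\tfrac1q+\tfrac d{p_0}-\tfrac d2$.

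\emph{Step 2 (frequency localisation).} For fixed $N$ I would split the $M$-sum into the diagonal $M\sim N$ and the off-diagonal part. Since $f^M_{i,k,l}$ has Fourier support in $\{\tfrac M2\le|\xi|\le2M\}$ and $P_j$ is a unit-scale multiplier, the off-diagonal pieces carry a rapidly decaying factor in $|\log_2(M/N)|$ and in the distance of the cube $j$ from that annulus, by the mismatch estimates of Lemma~\ref{lem:MismatchEstimates} applied as in the proof of Corollary~\ref{cor:HsEstimatePhysicalSpace}; summing these up with the help of~\eqref{eq:L2SummabilityAngDec}, the rescaling identity $g^M_i=(P_M(\ph_if))(M^{-1}\cdot)$, and $\sum_i\ph_i=1$ gives a contribution $\lesssim\|f\|_{H^s}$. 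On the diagonal $M\sim N$ (with $N\gtrsim1$, thanks to $P_{>4}$) the operators $P_N$ and $P_{>4}$ are uniformly bounded on $L^q_t\cL^p_rL^\mu_\theta$, so, up to further mismatch tails, it remains to control
\begin{align*}
 N^{\sigma}\Big\|\Big(\sum_{i,j,k,l}\big|e^{\pm\imu t|\nabla|}P_jf^N_{i,k,l}\big|^2\Big)^{1/2}\Big\|_{L^q_t\cL^p_rL^\mu_\theta}.
\end{align*}

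\emph{Step 3 (square function and the radial Strichartz estimate).} Each $e^{\pm\imu t|\nabla|}P_jf^N_{i,k,l}$ is, at every time, frequency-localised to a single unit cube, so the family $\{P_j\}$ acts here at unit scale. I would use the square-function estimate of~\cite[Lemma~2.3]{SSchr2021} to simultaneously lower the spatial integrability from $(p,\mu)$ down to $(p_0,2)$ and resolve the sum over $j$ without losing any derivative --- this is the point where the frequency randomisation is used and where a naive unit-scale Bernstein inequality would cost summability --- reducing the last display to $N^{\sigma}\big(\sum_{i,k,l}\|e^{\pm\imu t|\nabla|}f^N_{i,k,l}\|_{L^q_t\cL^{p_0}_rL^2_\theta}^2\big)^{1/2}$. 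Next, the Fourier--Bessel representation behind~\eqref{eq:DecompSphHarm} together with the definition~\eqref{eq:DefTk1} identifies $e^{\pm\imu t|\nabla|}f^N_{i,k,l}(r\theta)=a_k\,b_{k,l}(\theta)\,T_1^{\frac{d+2k-2}{2}}(\hat c^{N,i}_{k,l})(\pm Nt,Nr)$, using that $\hat c^{N,i}_{k,l}$ is supported in $(\tfrac12,2)$ where $\chi_{2^0}\equiv1$. Since $\|b_{k,l}\|_{L^2_\theta}=1$, the rescaling $t\mapsto Nt$, $r\mapsto Nr$ produces exactly the factor $N^{-\frac1q-\frac d{p_0}}=N^{\sigma-s}$, and Lemma~\ref{lem:EstimateTkp11} --- applicable precisely because $\frac1q+\frac{d-1}{p_0}<\frac{d-1}2$ --- gives $\|T_1^{\frac{d+2k-2}{2}}(\hat c^{N,i}_{k,l})\|_{L^q_t\cL^{p_0}_r}\lesssim\|\hat c^{N,i}_{k,l}\|_{L^2}$. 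Summing in $k,l$ via~\eqref{eq:L2SummabilityAngDec} and using $g^N_i=(P_N(\ph_if))(N^{-1}\cdot)$ turns the inner sum into $\lesssim N^{-2(\sigma-s)}\sum_i\|P_N(\ph_if)\|_{L^2_x}^2$, so that $N^{\sigma}\big(\sum_{i,k,l}\|\cdot\|^2\big)^{1/2}\lesssim N^{s}\big(\sum_i\|P_N(\ph_if)\|_{L^2_x}^2\big)^{1/2}$; finally the $\ell^2_N$-sum, combined with the first part of Corollary~\ref{cor:HsEstimatePhysicalSpace} (which encodes the compatibility of the physical-space partition with the Littlewood--Paley blocks), gives $\big(\sum_NN^{2s}\sum_i\|P_N(\ph_if)\|_{L^2_x}^2\big)^{1/2}\lesssim\|f\|_{H^s}$, as desired.

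\emph{Main obstacle.} The hard part is the compatibility of the three decompositions: the physical-space partition does not commute with the Littlewood--Paley projectors, and the pieces $P_j$ are intertwined with the angular atoms $f^N_{i,k,l}$ and with the thin-annulus localisers. One must descend in integrability and sum the $j$-pieces without spending regularity (which would destroy the gain of the frequency randomisation) and without spending decay in $i$ (which would destroy the summability of the physical-space pieces); using the square-function estimate of~\cite[Lemma~2.3]{SSchr2021} in place of Bernstein, and the mismatch estimates of Lemma~\ref{lem:MismatchEstimates} and Corollary~\ref{cor:HsEstimatePhysicalSpace}, is precisely what makes this bookkeeping close.
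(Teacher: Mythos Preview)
Your proposal follows essentially the same route as the paper's proof: Khintchine reduction, the square-function estimate of \cite[Lemma~2.3]{SSchr2021} to resolve the $j$-sum while lowering radial integrability to $p_0$, the Fourier--Bessel representation combined with Lemma~\ref{lem:EstimateTkp11}, and Corollary~\ref{cor:HsEstimatePhysicalSpace}~\ref{it:HsMismatchLP} to close the sums over $i$ and $N$. Two small points where the paper is cleaner: the off-diagonal pieces $M\not\sim N$ are exactly zero since each $f^M_{i,k,l}$ is Fourier-supported in $\{M/2\le|\xi|\le2M\}$ (so no mismatch argument is needed there), and after the square-function step the paper keeps $L^\mu_\theta$ and absorbs the angular factor via the good-frame bound $\|b_{k,l}\|_{L^\mu_\theta}\lesssim\sqrt\mu$ rather than descending to $L^2_\theta$; note also that $\chi_{2^0}$ is not identically $1$ on $(\tfrac12,2)$, which the paper handles by observing that unit-frequency data are covered by finitely many $P_{2^m}$, so it suffices to prove the bound with an extra $P_{2^0}$ in front.
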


\begin{proof}
	It is enough to prove the assertion for $\beta \geq \max\{p,q,\mu\}$ as $\Omega$ is a probability space. Moreover, it is sufficient to prove the assertion for $e^{\imu t |\nabla|}$.
	
	Note that by definition and Minkowski's inequality, we have
	\begin{align*}
		 &\|e^{\imu t |\nabla|} P_{> 4} f^\om\|_{L^\beta_\om \dot{B}^{s + \frac{1}{q} + \frac{d}{p_0} - \frac{d}{2}}_{q,(p,\mu),2}} \\
		 &\sim  \Big\| \Big( \sum_{M \in 2^{\Z}}  M^{2(s + \frac{1}{q} + \frac{d}{p_0} - \frac{d}{2})} \| P_M e^{\imu t |\nabla|} P_{>4} f^{\omega} \|_{L^q_t \cL^p_r L^\mu_\theta}^2 \Big)^{\frac{1}{2}} \Big\|_{L^\beta_\omega } \\
		&\lesssim \Big(\sum_{M \in 2^{\Z}}  M^{2(s +\frac{1}{q} + \frac{d}{p_0} - \frac{d}{2})} \|P_M e^{\imu t |\nabla|} P_{>4} f^{\omega} \|_{L^\beta_\omega L^q_t \cL^p_r L^\mu_\theta}^2 \Big)^{\frac{1}{2}}.
	\end{align*}
	 In view of Corollary~\ref{cor:HsEstimatePhysicalSpace}~\ref{it:HsMismatchLP}, it thus suffices to show
	\begin{equation}
		\label{eq:ProbEstRegRed1}
		 M^{\frac{1}{q} + \frac{d}{p_0} - \frac{d}{2}} \|P_M e^{\imu t |\nabla|} f^{\omega} \|_{L^\beta_\omega L^q_t \cL^p_r L^\mu_\theta} \lesssim \sqrt{\beta} \|\tilde{P}_M  (\ph_i f)\|_{l^2_i L^2_x}
	\end{equation}
	for every $M \in 2^{\N}$ with $M \geq 4$, where the implicit constant is independent of $M$. Using the notation from~\eqref{eq:DeffMom} to~\eqref{eq:DeffMikl} and recalling that $f^{M,\om}$ has frequency support in the annulus $\{\frac{M}{2} < |\xi| < 2M\}$ for every $M \in 2^\Z$, estimate~\eqref{eq:ProbEstRegRed1} further reduces to 
	\begin{equation}
		\label{eq:ProbEstRegRed2}
		 M^{\frac{1}{q} + \frac{d}{p_0} - \frac{d}{2}} \|e^{\imu t |\nabla|} f^{M,\omega} \|_{L^\beta_\omega L^q_t \cL^p_r L^\mu_\theta} \lesssim \sqrt{\beta} \|P_M (\ph_i f)\|_{l^2_i L^2_x}
	\end{equation}
	for all $M \in 2^{\N}$. We next apply Minkowski's inequality and the Khintchine inequality from Lemma~3.1 in~\cite{BT2008I} to infer
	\begin{align}
		\|e^{\imu t |\nabla|}  f^{M,\omega} \|_{L^\beta_\omega L^q_t \cL^p_r L^\mu_\theta} &\lesssim  \sqrt{\beta} \|e^{\imu t |\nabla|}  P_j f^M_{i,k,l} \|_{L^q_t \cL^p_r L^\mu_\theta l^2_{i,j,k,l}} \nonumber\\ 
		&\lesssim \sqrt{\beta} \|e^{\imu t |\nabla|} P_j f^M_{i,k,l} \|_{l^2_{i,k,l} L^q_t \cL^p_r L^\mu_\theta l^2_{j}}. \label{eq:EstProbRegLargeDev}
	\end{align}
	Lemma~2.3 from~\cite{SSchr2021} with radial integrability parameter $p_0$ on the right-hand side yields
	\begin{equation}
		\|P_j e^{\imu t |\nabla|}  f^M_{i,k,l} \|_{l^2_{i,k,l} L^q_t \cL^p_r L^\mu_\theta l^2_{j}} \lesssim \|e^{\imu t |\nabla|} f^M_{i,k,l} \|_{l^2_{i,k,l} L^q_t \cL^{p_0}_r L^\mu_\theta}. \label{eq:EstProbRegSquareFunction}
	\end{equation}
	Combining~\eqref{eq:ProbEstRegRed2}, \eqref{eq:EstProbRegLargeDev}, and~\eqref{eq:EstProbRegSquareFunction}, we infer that~\eqref{eq:ProbEstRegRed1} follows from
	\begin{equation}
		M^{\frac{1}{q} + \frac{d}{p_0} - \frac{d}{2}} \|e^{\imu t |\nabla|}  f^M_{i,k,l} \|_{l^2_{i,k,l} L^q_t \cL^{p_0}_r L^\mu_\theta} \lesssim  \|P_M (\ph_i f)\|_{l^2_i L^2_x} \label{eq:EstProbRegRed3}
	\end{equation}
	for every $M \in 2^\N$. To prove the latter estimate, we recall the notation $g^M_i = (P_M (\ph_i f))(M^{-1} \cdot)$ from~\eqref{eq:DefgM}. We further rescale $f^M_{i,k,l}$ to unit frequency by setting $g^M_{i,k,l} = f^{M}_{i,k,l}(M^{-1} \cdot)$. Consequently, we have the representation
	\begin{equation}
	\label{eq:RepresentaiongMikl}
		g^M_{i,k,l}(r \theta) = a_k r^{-\frac{d-2}{2}} b_{k,l}(\theta) \int_0^\infty c^{M,i}_{k,l}(\rho) J_{\frac{d+2k-2}{2}}(r \rho) \rho^{\frac{d}{2}} \dd \rho.
	\end{equation}
	We also note that~\eqref{eq:EstProbRegRed3} is equivalent to
	\begin{equation}
	\label{eq:EstProbRegUnitFreq}
		 \|e^{\imu t |\nabla|}  g^M_{i,k,l} \|_{l^2_{i,k,l} L^q_t \cL^{p_0}_r L^\mu_\theta} \lesssim  \|g^M_{i}\|_{l^2_i L^2_x}
	\end{equation}
	by scaling. Since $g^M_{i,k,l}$ has unit frequency, it suffices to show
	\begin{equation}
	\label{eq:EstProbRegPrepForT}
		\|P_{2^0} e^{\imu t |\nabla|}  g^M_{i,k,l} \|_{l^2_{i,k,l} L^q_t \cL^{p_0}_r L^\mu_\theta} \lesssim  \|g^M_{i}\|_{l^2_i L^2_x}
	\end{equation}
	in order to conclude~\eqref{eq:EstProbRegUnitFreq}. By Theorem~3.10 from~\cite{SW71} and~\eqref{eq:RepresentaiongMikl}, we obtain
	\begin{align*}
		P_{2^0} e^{\imu t |\nabla|}  g^M_{i,k,l}(r\theta) &= a_k r^{-\frac{d-2}{2}} b_{k,l}(\theta) \int_0^\infty \chi_{2^0}(\rho) e^{\imu t \rho} \hat{c}^{M,i}_{k,l}(\rho) J_{\frac{d+2k-2}{2}}(r \rho) \rho^{\frac{d}{2}} \dd \rho \\
		&= a_k b_{k,l}(\theta) T^{\frac{d+2k-2}{2}}_1(\hat{c}^{M,i}_{k,l})(t,r)
	\end{align*}
	with $T^{\frac{d+2k-2}{2}}_{1}$ from~\eqref{eq:DefTk1}. Property~\eqref{eq:BoundednessGoodFrame} of the good frame, Lemma~\ref{lem:EstimateTkp11}, and~\eqref{eq:L2SummabilityAngDec} thus yield
	\begin{align*}
		&\|P_{2^0} e^{\imu t |\nabla|}  g^M_{i,k,l} \|_{l^2_{i,k,l} L^q_t \cL^{p_0}_r L^\mu_\theta} \lesssim \|b_{k,l} T^{\frac{d+2k-2}{2}}_1(\hat{c}^{M,i}_{k,l})\|_{l^2_{i,k,l} L^q_t \cL^{p_0}_r L^\mu_\theta} \\
		&\lesssim \| T^{\frac{d+2k-2}{2}}_1(\hat{c}^{M,i}_{k,l})\|_{l^2_{i,k,l} L^q_t \cL^{p_0}_r} 
		\lesssim \| \hat{c}^{M,i}_{k,l} \|_{l^2_{i,k,l} L^2_\rho} \lesssim  \| \hat{c}^{M,i}_{k,l} \|_{l^2_{i,k,l} \cL^2_\rho} \lesssim \|g^{M}_i\|_{l^2_i L^2_x},
	\end{align*}
	i.e.~\eqref{eq:EstProbRegPrepForT} for every $M \in 2^\N$. Considering our previous reductions, this proves the assertion of the proposition.
\end{proof}

%%%%%%%%%%%%%%%%%%%%%%%%%%%%%%%%%%%%%%%%%%%%%%%%%%%%%%%%%%%%%%%%%%%%%%%%%%%%%%%%%%%%%%%%%%%%%%%%%%%%%%%%%%
%%%%%%%%%%%%%%%%%%%%%%%%%%%%%%%%%%%%%%%%%%%%%%%%%%%%%%%%%%%%%%%%%%%%%%%%%%%%%%%%%%%%%%%%%%%%%%%%%%%%%%%%%%
%%%%%%%%%%%%%%%%%%%%%%%%%%%%%%%%%%%%%%%%%%%%%%%%%%%%%%%%%%%%%%%%%%%%%%%%%%%%%%%%%%%%%%%%%%%%%%%%%%%%%%%%%%
%%%%%%%%%%%%%%%%%%%%%%%%%%%%%%%%%%%%%%%%%%%%%%%%%%%%%%%%%%%%%%%%%%%%%%%%%%%%%%%%%%%%%%%%%%%%%%%%%%%%%%%%%%
We now turn to the improved decay of solutions of the linear half-wave equation with randomized data originating in the physical-space component of the randomization. 

\begin{rem} 
\label{rem:DiscussionPhysicalSpaceRand}
The basic idea to apply the dispersive estimate is the same as in the Schr{\"o}dinger case, where the pure physical-space randomization was introduced. However, in contrast to the Schr{\"o}dinger equation, the dispersive estimate for the half-wave equation loses too much regularity in order to be useful here. In view of the frequency-space component of the randomization, we would like to employ the Klainerman-Tataru improved dispersive estimate instead. A direct application of this estimate yet fails as we lose the summability of all the different pieces of the decomposition underlying the randomization~\eqref{eq:DefRandomization}.

We thus rely on the variant of the Klainerman-Tataru improved dispersive estimate from Lemma~\ref{lem:ImprovedDispersiveEstimate}. As it only requires the data to be localized to an annulus, we can first sum the pieces of the frequency-space decomposition lying in such an annulus. The symbol of the localizer to that annulus is radial so that the remaining localization in frequency space commutes with the decomposition in the good frame. Consequently, we can sum up the pieces of the latter decomposition \emph{before} we apply Lemma~\ref{lem:ImprovedDispersiveEstimate}. This is the crucial point in order to extract the dispersive decay with an affordable loss of regularity and without losing the summability of the remaining pieces of the decomposition~\eqref{eq:DecPMf}.

The drawback of Lemma~\ref{lem:ImprovedDispersiveEstimate} is of course that it only applies for large times, where the meaning of large depends on the frequency. We thus split the time interval depending on the frequency and proceed for large times as explained above, while for small times we can apply Proposition~\ref{prop:ProbabilisticEstimateReg}.
\end{rem}

\begin{prop}
	\label{prop:ProbabilisticEstimateDecay}
	Let $d \geq 3$. Let $s \in \R$, $q,p \in [2,\infty)$, and $\sigma \geq 0$ such that $(q,p,\sigma)$ satisfies
	\begin{align*}
		\frac{d-1}{2} - \frac{1}{q} - \frac{d-1}{p} - \sigma > 0.
	\end{align*}
	Take $f \in H^s(\R^d)$ and let $f^\om$ be its randomization from~\eqref{eq:DefRandomization}. Fix $\lambda_0 \in \N$ with $\lambda_0 \geq \sqrt{d} + 4$ and $M_0 \in 2^{\N}$ with $M_0 \geq \lambda_0 + \sqrt{d}$. Then there exists a constant $C > 0$ such that
	\begin{align*}
		 \|\langle t \rangle^\sigma e^{\pm \imu t |\nabla|} P_{>M_0} f^\om \|_{L^\beta_\om L^q_t \dot{B}^s_{p,2}} \leq C \sqrt{\beta} \|f\|_{H^{s + \frac{1}{q} + \sigma}}
	\end{align*}
	for all $\beta \in [1,\infty)$.
\end{prop}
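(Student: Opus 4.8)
\emph{Overview and reduction.} It suffices to treat $e^{\imu t|\nabla|}$. Writing out the Besov norm and applying Minkowski's inequality in the $\om$- and $t$-variables (both exponents being $\ge 2$; as at the beginning of the proof of Proposition~\ref{prop:ProbabilisticEstimateReg} we may assume $\beta\ge\max\{p,q\}$), it is enough to show
\[
  \Big(\sum_{M\in 2^\Z}M^{2s}\|\langle t\rangle^\sigma P_M e^{\imu t|\nabla|}P_{>M_0}f^\om\|_{L^\beta_\om L^q_t L^p_x}^2\Big)^{1/2}\lesssim\sqrt\beta\,\|f\|_{H^{s+\frac1q+\sigma}}.
\]
By~\eqref{eq:DeffMom} and the fact that $f^{M,\om}$ has frequency support in $\{M/2<|\xi|<2M\}$, only scales $M>M_0$ contribute and the dyadic pieces decouple, so the task reduces to proving, for each such $M$,
\[
  \|\langle t\rangle^\sigma e^{\imu t|\nabla|}f^{M,\om}\|_{L^\beta_\om L^q_t L^p_x}\lesssim\sqrt\beta\,M^{\frac1q+\sigma}\|P_M(\ph_i f)\|_{l^2_i L^2_x};
\]
indeed, multiplying by $M^s$ and taking the $l^2_M$-norm, the right-hand side is $\lesssim\|f\|_{H^{s+\frac1q+\sigma}}$ by Corollary~\ref{cor:HsEstimatePhysicalSpace}~\ref{it:HsMismatchLP} with exponent $p=2$ (recall $M>M_0\gg1$, so $P_{>4}P_M=P_M$ and $\langle M\rangle\sim M$). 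We now fix $M>M_0$ and a width $\kappa$ depending only on $d$, and split the time line into $|t|\le c_0 M$ and $|t|\ge c_0 M$, with $c_0$ chosen (depending on $d$) so that $|t|\ge c_0 M$ implies $|t|\ge\lambda$ for every annulus $\supp\zeta_{\kappa,\lambda}$ meeting the frequency support of $f^{M,\om}$, allowing Lemma~\ref{lem:ImprovedDispersiveEstimate} to be applied there with $c=1$. The hypotheses $\lambda_0\ge\sqrt d+4$ and $M_0\ge\lambda_0+\sqrt d$ guarantee that all these $\lambda$ satisfy $\lambda\ge\kappa+1$ and $\lambda\sim M$, and that each unit frequency cube meets only $O(1)$ of them.

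\emph{Small times.} On $|t|\le c_0 M$ we bound $\langle t\rangle^\sigma\lesssim M^\sigma$ and invoke the estimate obtained in the proof of Proposition~\ref{prop:ProbabilisticEstimateReg} (cf.~\eqref{eq:ProbEstRegRed2}) with $\mu=p$ and radial exponent $p_0$ strictly larger than $\frac{2(d-1)q}{(d-1)q-2}$; such a choice with $p_0\le p$ is possible since the assumption $\frac{d-1}2-\frac1q-\frac{d-1}p-\sigma>0$ together with $\sigma\ge0$ forces $p>\frac{2(d-1)q}{(d-1)q-2}$ (and $\tfrac{2(d-1)q}{(d-1)q-2}>2$). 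This yields
\[
  \|\langle t\rangle^\sigma e^{\imu t|\nabla|}f^{M,\om}\|_{L^\beta_\om L^q_{|t|\le c_0 M}L^p_x}\lesssim\sqrt\beta\,M^{\sigma+\frac d2-\frac d{p_0}-\frac1q}\|P_M(\ph_i f)\|_{l^2_i L^2_x}.
\]
Since the exponent $\frac d2-\frac d{p_0}-\frac1q$ tends to $\frac1{(d-1)q}$ as $p_0$ decreases to $\frac{2(d-1)q}{(d-1)q-2}$, and $\frac1{(d-1)q}<\frac1q$ because $d\ge3$, we may fix $p_0$ so that this exponent is $\le\frac1q$, whence the small-time part is bounded by $\sqrt\beta\,M^{\frac1q+\sigma}\|P_M(\ph_i f)\|_{l^2_i L^2_x}$.

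\emph{Large times.} On $|t|\ge c_0 M$ we apply the Khintchine inequality (\cite[Lemma~3.1]{BT2008I}), which bounds $\|\langle t\rangle^\sigma e^{\imu t|\nabla|}f^{M,\om}\|_{L^\beta_\om L^q_t L^p_x}$ by $\sqrt\beta$ times the square-function norm of $\langle t\rangle^\sigma e^{\imu t|\nabla|}P_j f^M_{i,k,l}$ in $L^q_t L^p_x(l^2_{i,j,k,l})$. We decompose each $P_j f^M_{i,k,l}=\sum_\lambda Q_{\kappa,\lambda}P_j f^M_{i,k,l}$ — a sum over $O(1)$ values of $\lambda\sim M$ — commute $Q_{\kappa,\lambda}$ past $P_j$ and $e^{\imu t|\nabla|}$ (all being Fourier multipliers), and sum out the projections $P_j$ via the square function estimate (\cite[Lemma~2.3]{SSchr2021}, applicable since $p\ge2$), arriving at
\[
  \Big(\sum_{i,k,l,\lambda}\|\langle t\rangle^\sigma e^{\imu t|\nabla|}Q_{\kappa,\lambda}f^M_{i,k,l}\|_{L^q_{|t|\ge c_0 M}L^p_x}^2\Big)^{1/2}.
\]
By Lemma~\ref{lem:ImprovedDispersiveEstimate} (with $\lambda\sim M\ge\kappa+1$ and $|t|\ge c_0 M\ge\lambda$),
\[
  \|\langle t\rangle^\sigma e^{\imu t|\nabla|}Q_{\kappa,\lambda}f^M_{i,k,l}\|_{L^q_{|t|\ge c_0 M}L^p_x}\lesssim\Big\||t|^\sigma\Big(\tfrac{|t|}{\lambda}\Big)^{-\frac{d-1}2(1-\frac2p)}\Big\|_{L^q_{|t|\ge c_0 M}}\|Q_{\kappa,\lambda}f^M_{i,k,l}\|_{L^{p'}_x}.
\]
The time norm on the right is finite \emph{precisely} when $\frac{d-1}2-\frac1q-\frac{d-1}p-\sigma>0$, and then, using $\lambda\sim M$, it is comparable to $\lambda^{\frac{d-1}2(1-\frac2p)}\cdot\lambda^{\sigma-\frac{d-1}2(1-\frac2p)+\frac1q}=\lambda^{\sigma+\frac1q}\sim M^{\sigma+\frac1q}$. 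Hence the large-time part is $\lesssim\sqrt\beta\,M^{\sigma+\frac1q}\,\|Q_{\kappa,\lambda}f^M_{i,k,l}\|_{l^2_{i,k,l,\lambda}L^{p'}_x}$, and it remains to prove the static estimate
\[
  \|Q_{\kappa,\lambda}f^M_{i,k,l}\|_{l^2_{i,k,l,\lambda}L^{p'}_x}\lesssim\|P_M(\ph_i f)\|_{l^2_i L^2_x}.
\]

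\emph{The static summation and conclusion.} For the last estimate one uses that $Q_{\kappa,\lambda}$ is a radial Fourier multiplier, hence preserves the good-frame structure of $f^M_{i,k,l}$, so that $\|Q_{\kappa,\lambda}f^M_{i,k,l}\|_{L^{p'}_x}=\|b_{k,l}\|_{L^{p'}(S^{d-1})}\,\|R^{M,i,\lambda}_{k,l}\|_{\cL^{p'}(0,\infty)}$ for the corresponding radial profile $R^{M,i,\lambda}_{k,l}$, while $\sum_{k,l}Q_{\kappa,\lambda}f^M_{i,k,l}=Q_{\kappa,\lambda}P_M(\ph_i f)$. Combining the bound~\eqref{eq:BoundednessGoodFrame} on the good frame, the $L^2$-orthogonality~\eqref{eq:L2SummabilityAngDec} of the angular decomposition, and Corollary~\ref{cor:HsEstimatePhysicalSpace}~\ref{it:HsMismatchLPAndAn} with $s=0$ (which handles the intertwined summation over the annular parameter $\lambda$ and the physical-space index $i$ in $L^{p'}_x$), one obtains the claim. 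Adding the small- and large-time contributions gives the displayed single-$M$ bound, and summing over $M$ as explained in the reduction finishes the proof. I expect this last static summation to be the main obstacle: it is exactly here that Lemma~\ref{lem:ImprovedDispersiveEstimate} in the annular (rather than cube) form and Corollary~\ref{cor:HsEstimatePhysicalSpace}~\ref{it:HsMismatchLPAndAn} are indispensable, since otherwise the decay gained from the improved dispersive estimate would be consumed when reassembling the pieces of the three decompositions in frequency, angle, and physical space.
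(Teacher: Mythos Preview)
Your overall architecture is right (time splitting at $|t|\sim M$, small times via Proposition~\ref{prop:ProbabilisticEstimateReg}, large times via Lemma~\ref{lem:ImprovedDispersiveEstimate}, and Corollary~\ref{cor:HsEstimatePhysicalSpace}~\ref{it:HsMismatchLPAndAn} at the end), but the large-time step contains a genuine gap precisely where you flag it. After Khintchine you move the angular indices $(k,l)$ \emph{outside} the $L^p_x$-norm and then apply the dispersive estimate to each piece $Q_{\kappa,\lambda}f^M_{i,k,l}$ separately, which leaves you with
\[
  \|Q_{\kappa,\lambda}f^M_{i,k,l}\|_{l^2_{k,l}L^{p'}_x}.
\]
This quantity is \emph{not} controlled by the tools you list. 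The good-frame bound and~\eqref{eq:L2SummabilityAngDec} only give $l^2_{k,l}$-summability at the $L^2$ level; for $p'<2$ one cannot pass from $\|R^{M,i,\lambda}_{k,l}\|_{\cL^2_r}$ to $\|R^{M,i,\lambda}_{k,l}\|_{\cL^{p'}_r}$ (Hausdorff--Young for the Hankel transform goes the wrong way, and the radial profiles are not compactly supported). Nor does Corollary~\ref{cor:HsEstimatePhysicalSpace}~\ref{it:HsMismatchLPAndAn} help: it only deals with the sum over $i,\lambda,M$ of $\|Q_{\kappa,\lambda}P_M(\ph_i f)\|_{L^{p'}}$, with the angular decomposition already resummed.

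The fix, and the point the paper stresses in Remark~\ref{rem:DiscussionPhysicalSpaceRand}, is to keep the $l^2_{k,l}$-norm \emph{inside} $L^p_x$. After Khintchine one uses Minkowski only to pull $i$ and $\lambda$ out, leaving $L^p_x\,l^2_{k,l}\,l^2_j$; the pointwise square-function bound $(\sum_j|P_jg|^2)^{1/2}\lesssim(|\check\psi_0|\ast|g|^2)^{1/2}$ eliminates $j$ without disturbing the inner $l^2_{k,l}$. Because $Q_{1,\lambda}$ has a radial symbol, the coefficients of $e^{\imu t|\nabla|}Q_{1,\lambda}f^M_{i,k,l}$ in the good frame are exactly those of $e^{\imu t|\nabla|}Q_{1,\lambda}P_M(\ph_i f)$, so Parseval on the sphere and H\"older (using $p\ge2$) give
\[
  \|e^{\imu t|\nabla|}Q_{1,\lambda}f^M_{i,k,l}\|_{L^p_x l^2_{k,l}}\lesssim\|e^{\imu t|\nabla|}Q_{1,\lambda}P_M(\ph_i f)\|_{L^p_x}.
\]
Only \emph{after} this resummation does one apply Lemma~\ref{lem:ImprovedDispersiveEstimate}, landing on $\|Q_{1,\lambda}P_M(\ph_i f)\|_{L^{p'}_x}$, which is exactly what Corollary~\ref{cor:HsEstimatePhysicalSpace}~\ref{it:HsMismatchLPAndAn} handles. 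In short: sum the angular pieces \emph{before} the dispersive estimate, not after.
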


\begin{proof}
	Since $\Omega$ is a probability space, we only have to prove the assertion for $\beta \geq \max\{p,q\}\geq 2$. By time reversal symmetry, it is enough to prove the assertion for $e^{\imu t |\nabla|}$.
	
	In a first step, we apply Minkowski's inequality to infer
	\begin{align}
		&\|\langle t \rangle^\sigma e^{\imu t |\nabla|} P_{>M_0} f^\om \|_{L^\beta_\om L^q_t \dot{B}^s_{p,2}} 
		\leq \Big\|\Big(\sum_{M \in 2^\Z} M^{2s} \| \langle t \rangle^\sigma P_M e^{\imu t |\nabla|} P_{>M_0} f^\om \|_{L_t^q L_x^p}^2  \Big)^{\frac{1}{2}} \Big\|_{L^\beta_\om} \nonumber\\
		&\leq \Big\|\Big(\sum_{M \in 2^\Z} M^{2s} \| \langle t \rangle^\sigma P_M e^{\imu t |\nabla|} P_{>M_0} f^\om \|_{L_{[-M,M]}^q L_x^p}^2  \Big)^{\frac{1}{2}} \Big\|_{L^\beta_\om} \nonumber\\
		&\qquad + \Big\|\Big(\sum_{M \in 2^\Z} M^{2s} \| \langle t \rangle^\sigma P_M e^{\imu t |\nabla|} P_{>M_0} f^\om \|_{L_{I_M}^q L_x^p}^2  \Big)^{\frac{1}{2}} \Big\|_{L^\beta_\om}, \label{eq:EstProbDecTimeSplit}
	\end{align}
	where $I_M = (-\infty,-M) \cup (M, \infty)$. For the first term on the above right-hand side we get
	\begin{align}
		&\Big\|\Big(\sum_{M \in 2^\Z} M^{2s} \| \langle t \rangle^\sigma P_M e^{\imu t |\nabla|} P_{>M_0} f^\om \|_{L_{[-M,M]}^q L_x^p}^2  \Big)^{\frac{1}{2}} \Big\|_{L^\beta_\om} \nonumber\\
		&\lesssim \Big\|\Big(\sum_{M \in 2^\Z} M^{2(s+\sigma)} \| P_M e^{\imu t |\nabla|} P_{>M_0} f^\om \|_{L_{[-M,M]}^q L_x^p}^2  \Big)^{\frac{1}{2}} \Big\|_{L^\beta_\om} \lesssim \|e^{\imu t |\nabla|} P_{>M_0} f^\om\|_{L^\beta_\om \dot{B}^{s+\sigma}_{q,p,2}}. \label{eq:EstProbDecFiniteTime1}
	\end{align}
	Take $\delta_1 \in (0, \frac{1}{d(d-1)q})$ so small that $\frac{\delta_1}{2} \leq \frac{1}{2} - \frac{1}{(d-1)q} - \frac{1}{p}$ and define $p_0 \in [2,\infty)$ by
	\begin{align*}
		\frac{1}{p_0} = \frac{1 - \delta_1}{2} - \frac{1}{(d-1)q}.
	\end{align*}		
	This choice of $p_0$ implies that $p_0 \leq p$ as well as
	\begin{align*}
		\frac{1}{q} + \frac{d-1}{p_0} < \frac{d-1}{2} \qquad \text{and} \qquad -\frac{1}{q} - \frac{d}{p_0} + \frac{d}{2} = \frac{1}{(d-1)q} + \frac{d}{2} \delta_1 < \frac{1}{q},
	\end{align*}
	so that Proposition~\ref{prop:ProbabilisticEstimateReg} with $\mu = p$ yields
	\begin{align}
		\|e^{\imu t |\nabla|} P_{>M_0} f^\om\|_{L^\beta_\om \dot{B}^{s+\sigma}_{q,p,2}} \lesssim \sqrt{\beta} \|f\|_{H^{s + \sigma - \frac{1}{q} - \frac{d}{p_0} + \frac{d}{2}}} \lesssim \sqrt{\beta} \|f\|_{H^{s + \frac{1}{q} + \sigma}}. \label{eq:EstProbDecFiniteTime2}
	\end{align}
	
	It thus remains to estimate the second summand on the right-hand side of~\eqref{eq:EstProbDecTimeSplit}. 
	We only treat the interval $[M,\infty)$, the estimate on $(-\infty,-M]$ follows in the same way. We use the notation introduced in~\eqref{eq:DeffMom} to~\eqref{eq:DeffMikl}. As each $f^{M,\om}$ has frequency support in $\{M/2 < |\xi| < 2M\}$, we obtain
	\begin{align}
		&\Big\|\Big(\sum_{M \in 2^\Z} M^{2s} \| \langle t \rangle^\sigma P_M e^{\imu t |\nabla|} P_{>M_0} f^\om \|_{L_{[M, \infty)}^q L_x^p}^2  \Big)^{\frac{1}{2}} \Big\|_{L^\beta_\om} \nonumber \\
		&\lesssim \|M^s \langle t \rangle^\sigma  e^{\imu t |\nabla|} P_{>M_0} f^{M,\om}\|_{L^\beta_\om l^2_M L^q_{[M,\infty)} L^p_x} \nonumber\\
		&\lesssim \sqrt{\beta}  \|M^s \langle t \rangle^\sigma  e^{\imu t |\nabla|} P_{>M_0} P_j f^M_{i,k,l}\|_{l^2_M L^q_{[M,\infty)} L^p_x l^2_{i,j,k,l}}, \label{eq:EstProbDecLargeTime1}
	\end{align}
	where we also used Minkowski's inequality and the Khinthchine inequality~\cite[Lemma~3.1]{BT2008I}. We next note that
	\begin{align*}
		\sum_{j \in \Z^d} |e^{\imu t |\nabla|} P_{>M_0} P_j f^M_{i,k,l}|^2 = \sum_{\lambda = \lambda_0}^\infty \sum_{|j| \sim \lambda} |e^{\imu t |\nabla|} P_{>M_0} P_j f^M_{i,k,l}|^2,
	\end{align*}
	where summation over $|j| \sim \lambda$ means to sum over those $j \in \Z^d$ which satisfy $-\frac{1}{2} \leq |j| - \lambda < \frac{1}{2}$. Note that $P_{>M_0}P_j = 0$ for all $j \in \Z^d$ with $|j| \leq \lambda_0$ so that the summands for these $j$ vanish on the above left-hand side. We indicate the corresponding $l^2$-norms by the symbols $l^2_{\lambda \geq \lambda_0}$ and $l^2_{|j| \sim \lambda}$, respectively. Since the symbols of the multipliers $Q_{1,\lambda}$ form a partition of unity on $|\xi| \geq 1$ and $Q_{1,\lambda'} P_j = 0$ for every $j$ with $|j| \sim \lambda$ unless $\lambda \sim \lambda'$, i.e. $|\lambda - \lambda'| \leq \sqrt{d} + \frac{3}{2}$, we infer
	\begin{align}
		&\|M^s \langle t \rangle^\sigma  e^{\imu t |\nabla|} P_{>M_0} P_j f^M_{i,k,l}\|_{l^2_M L^q_{[M,\infty)} L^p_x l^2_{i,j,k,l}} \nonumber\\
		&= \Big\|M^s \langle t \rangle^\sigma  e^{\imu t |\nabla|} P_{>M_0} \sum_{\lambda' \sim \lambda} Q_{1,\lambda'} P_j f^M_{i,k,l} \Big\|_{l^2_M L^q_{[M,\infty)} L^p_x l^2_{i,k,l}l^2_{\lambda \geq \lambda_0} l^2_{|j|\sim\lambda}} \nonumber\\
		&\lesssim\|M^s \langle t \rangle^\sigma P_j e^{\imu t |\nabla|} P_{>M_0} Q_{1,\lambda} f^M_{i,k,l}\|_{l^2_M l^2_i l^2_{\lambda \geq 2} L^q_{[M,\infty)} L^p_x l^2_{k,l} l^2_{j}}, \label{eq:Splittingl2jNorm}
	\end{align}
	also employing Minkowski's inequality in the last step. In order to estimate the $l^2_{j}$-norm without losing summability in $l$ and $k$, we employ the pointwise bound
	\begin{equation*}
		\Big( \sum_{j \in \Z^d} |P_j g|^2 \Big)^{\frac{1}{2}} \lesssim (|\check{\psi}_0| \ast |g|^2)^{\frac{1}{2}}
\end{equation*}
from the proof of Lemma~2.8 in~\cite{KMV2019}, where $\check{\psi}_0$ denotes the inverse Fourier transform of $\psi_0$ from~\eqref{eq:DefPsij}.
	We obtain
	\begin{align}
		 &\|P_j e^{\imu t |\nabla|} P_{>M_0} Q_{1,\lambda} f^M_{i,k,l}\|_{L^p_x l^2_{k,l} l^2_{j}} \lesssim \| (|\check{\psi}_0| \ast |e^{\imu t |\nabla|} P_{>M_0} Q_{1,\lambda} f^M_{i,k,l}|^2)^{\frac{1}{2}}\|_{L^p_x l^2_{k,l}} \nonumber\\
		 &= \| |\check{\psi}_0| \ast \|e^{\imu t |\nabla|} P_{>M_0} Q_{1,\lambda} f^M_{i,k,l}\|_{l^2_{k,l}}^2\|_{L^{\frac{p}{2}}_x}^{\frac{1}{2}} \lesssim  \|\check{\psi}_0\|_{L^1_x}^{\frac{1}{2}} \| \|e^{\imu t |\nabla|} P_{>M_0} Q_{1,\lambda} f^M_{i,k,l}\|_{l^2_{k,l}}^2\|_{L^{\frac{p}{2}}_x}^{\frac{1}{2}} \nonumber\\
		 &\lesssim \|e^{\imu t |\nabla|} P_{>M_0} Q_{1,\lambda} f^M_{i,k,l}\|_{L^p_x l^2_{k,l}}. \label{eq:SquareFunctionOverAnnuli}
	\end{align}
	 Exploiting that the symbol of $e^{\imu t |\nabla|} P_{>M_0} Q_{1,\lambda}$, i.e.,
	\begin{align*}
		e^{\imu t |\xi|} \chi_{>M_0}(\xi) \zeta_{1,\lambda}(\xi) = e^{\imu t |\xi|} (1-\eta_0(|\xi|/M_0)) \zeta^0_{1}(|\xi|-\lambda)
	\end{align*}
	is radially symmetric, Theorem~3.10 in~\cite{SW71}, rescaling, and the definition of $f^M_{i,k,l}$  yield the representation
	\begin{align*}
		&e^{\imu t |\nabla|} P_{>M_0} Q_{1,\lambda} f^M_{i,k,l}(r \theta) = \Big[a_k M^{-\frac{d-2}{2}} r^{-\frac{d-2}{2}} \! \int_0^\infty \! e^{\imu M t \rho} (1-\eta_0(M \rho/M_0)) \zeta^0_{1}(M \rho - \lambda) \\
		&\hspace{20em} \cdot \hat{c}^{M,i}_{k,l}(\rho) J_{\frac{d+2k-2}{2}}(M r \rho) \rho^{\frac{d}{2}} \dd \rho \Big] b_{k,l}(\theta) \\
		&=: d^{M,i,\lambda}_{k,l}(t,r) b_{k,l}(\theta).
	\end{align*}
	Consequently, we deduce from~\eqref{eq:SumfMikl} that
	\begin{align*}
		&e^{\imu t |\nabla|} P_{>M_0} Q_{1,\lambda} P_M(\ph_i f)(r\theta) = \sum_{k =0}^\infty \sum_{l = 1}^{N_k} e^{\imu t |\nabla|} P_{>M_0} Q_{1,\lambda} f^M_{i,k,l}(r \theta) \\
		&= \sum_{k =0}^\infty \sum_{l = 1}^{N_k} d^{M,i,\lambda}_{k,l}(t,r) b_{k,l}(\theta).
	\end{align*}
	Hence, the functions $(d^{M,i,\lambda}_{k,l}(t,r))_{k,l}$ are the coefficients of the expansion of $e^{\imu t |\nabla|} P_{>M_0} Q_{1,\lambda} P_M(\ph_i f)(r \cdot)$ in the orthonormal basis $(b_{k,l})_{k,l}$ of $L^2(S^{d-1})$. Parseval's identity therefore yields
	\begin{align*}
		\|d^{M,i,\lambda}_{k,l}(t,r)\|_{l^2_{k,l}} = \|e^{\imu t |\nabla|} P_{>M_0} Q_{1,\lambda} P_M(\ph_i f)(r \cdot)\|_{L^2_\theta}.
	\end{align*}
	Using Minkowski's inequality, property~\eqref{eq:BoundednessGoodFrame} of the good frame, and H{\"o}lder's inequality on the sphere, we thus infer
	\begin{align}
		&\|e^{\imu t |\nabla|} P_{>M_0} Q_{1,\lambda} f^M_{i,k,l}\|_{L^p_x l^2_{k,l}} \lesssim \|d^{M,i,\lambda}_{k,l}  b_{k,l} \|_{\cL_r^p l^2_{k,l} L^p_\theta} \lesssim \|d^{M,i,\lambda}_{k,l} \|_{\cL_r^p l^2_{k,l}} \nonumber\\
		&\lesssim \|e^{\imu t |\nabla|} P_{>M_0} Q_{1,\lambda} P_M(\ph_i f)\|_{\cL^p_r L^2_\theta} \lesssim \|e^{\imu t |\nabla|} P_{>M_0} Q_{1,\lambda} P_M(\ph_i f)\|_{L^p_x}. \label{eq:EstProbDeckl}
	\end{align}
	We next note that $Q_{1,\lambda} P_M = 0$ if $\lambda + 1 < \frac{M}{2}$  or $\lambda - 1 > 2M$ as the symbols of $Q_{1,\lambda}$ and $P_M$ have disjoint supports in these cases. If $Q_{1,\lambda} P_M$ does not vanish for $M \in 2^\Z$ and $\lambda \in \N$ with $\lambda \geq 2$, we therefore have
	\begin{align*}
		3 \lambda \geq M \geq \frac{\lambda-1}{2} \geq \frac{\lambda}{4}.
	\end{align*}
	 Lemma~\ref{lem:ImprovedDispersiveEstimate} implies
	\begin{align*}
		\|e^{\imu t |\nabla|} P_{>M_0} Q_{1,\lambda} P_M(\ph_i f)\|_{L^p_x} \lesssim (\lambda^{-1} t)^{-\frac{d-1}{2}(1-\frac{2}{p})} \| Q_{1,\lambda} P_{>M_0} P_M(\ph_i f)\|_{L^{p'}_x}
	\end{align*}
	for all $t \geq \frac{\lambda}{4}$ and $\lambda \geq 2$. With the previous considerations on the relation between $M$ and $\lambda$ we conclude that 
	\begin{align*}
		\|e^{\imu t |\nabla|} P_{>M_0} Q_{1,\lambda} P_M(\ph_i f)\|_{L^p_x} \lesssim (M^{-1} t)^{-\frac{d-1}{2}(1-\frac{2}{p})} \| Q_{1,\lambda} P_{>M_0} P_M(\ph_i f)\|_{L^{p'}_x}
	\end{align*}		
	 for all $t \geq M$. 
	Combining this estimate with~\eqref{eq:EstProbDeckl},~\eqref{eq:SquareFunctionOverAnnuli}, and~\eqref{eq:Splittingl2jNorm}, we obtain
	\begin{align}
		&\|M^s \langle t \rangle^\sigma  e^{\imu t |\nabla|} P_{>M_0} P_j f^M_{i,k,l}\|_{l^2_M L^q_{[M,\infty)} L^p_x l^2_{i,j,k,l}} \nonumber\\
		&\lesssim \|M^s \langle t \rangle^\sigma e^{\imu t |\nabla|} P_{>M_0} Q_{1,\lambda} P_M(\ph_i f)\|_{l^2_M l^2_i l^2_{\lambda \geq 2} L^q_{[M,\infty)} L^p_x} \nonumber\\
		&\lesssim \|M^s t^\sigma (M^{-1} t)^{-\frac{d-1}{2}(1-\frac{2}{p})} \| Q_{1,\lambda} P_{>M_0} P_M(\ph_i f)\|_{L^{p'}_x}\|_{l^2_M l^2_i l^2_{\lambda \geq 2} L^q_{[M,\infty)}} \nonumber \\
		&\lesssim \|M^{s+\sigma} \| (M^{-1} t)^{-\frac{d-1}{2}+\frac{d-1}{p}+\sigma}\|_{L^q_{[M,\infty)}} \| Q_{1,\lambda} P_{>M_0} P_M(\ph_i f)\|_{L^{p'}_x}\|_{l^2_M l^2_i l^2_{\lambda \geq 2}} \nonumber \\
		&\lesssim \|M^{s+\frac{1}{q}+\sigma}  Q_{1,\lambda} P_{>M_0} P_M(\ph_i f)\|_{l^2_M l^2_i l^2_{\lambda \geq 2}L^{p'}_x}, \label{eq:EstProbDecIntTime}
	\end{align}
	where we exploited the assumption on $q$, $p$, and $\sigma$ in the last step. Inserting~\eqref{eq:EstProbDecIntTime} into~\eqref{eq:EstProbDecLargeTime1} and applying Corollary~\ref{cor:HsEstimatePhysicalSpace}~\ref{it:HsMismatchLPAndAn}, we arrive at
	\begin{align}
		\Big\|\Big(\sum_{M \in 2^\Z} M^{2s} \| \langle t \rangle^\sigma P_M e^{\imu t |\nabla|} P_{>M_0} f^\om \|_{L_{[M, \infty)}^q L_x^p}^2  \Big)^{\frac{1}{2}} \Big\|_{L^\beta_\om} \lesssim \sqrt{\beta} \|f\|_{H^{s+\frac{1}{q}+\sigma}}. \label{eq:EstProbDecLargeTimeFinal}
	\end{align}
	The estimate on $(-\infty,-M]$ is derived in the same way so that~\eqref{eq:EstProbDecLargeTimeFinal}, \eqref{eq:EstProbDecFiniteTime1} and \eqref{eq:EstProbDecFiniteTime2}, and~\eqref{eq:EstProbDecTimeSplit} yield the assertion of the proposition.
\end{proof}

Interpolating between the different advantages of the randomization, i.e. between the estimates from Proposition~\ref{prop:ProbabilisticEstimateReg} and Proposition~\ref{prop:ProbabilisticEstimateDecay}, we obtain the desired $L^1_t L^\infty_x$-estimate for the linear half-wave flow with randomized data.
\begin{prop}
	\label{prop:ProbabilisticL1LinftyEstiamte}
	Let $d \geq 4$ and $s, s' \in \R$ with $s - s' > \frac{d+1}{2(d-1)}$. Take $f \in H^s(\R^d)$ and let $f^\om$ denote its randomization from~\eqref{eq:DefRandomization}. Fix a dyadic integer $M_0 \in 2^\N$ with $M_0 \geq \lceil \sqrt{d} + 4 \rceil + \sqrt{d}$. Then there exists a constant $C > 0$ such that
	\begin{align*}
		\| |\nabla|^{s'} e^{\pm \imu t |\nabla|} P_{>M_0} f^\om \|_{L^\beta_\om L^1_t L^\infty_x} \leq C \sqrt{\beta} \|f\|_{H^s}
	\end{align*}
	for all $\beta \in [1,\infty)$.
\end{prop}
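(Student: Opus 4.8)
The plan is to treat each Littlewood--Paley block $P_M e^{\imu t |\nabla|} P_{>M_0} f^\om$ separately and, for each dyadic $M \gtrsim M_0$, to split the time line at the frequency-dependent scale $|t| \sim M^{\alpha}$ with $\alpha = \frac{d}{d-1}$, applying Proposition~\ref{prop:ProbabilisticEstimateReg} on $[-M^\alpha,M^\alpha]$ and Proposition~\ref{prop:ProbabilisticEstimateDecay} on $\{|t|>M^\alpha\}$. Starting from the triangle inequality and Bernstein's inequality,
\begin{align*}
	\| |\nabla|^{s'} e^{\imu t |\nabla|} P_{>M_0} f^\om \|_{L^1_t L^\infty_x} \lesssim \sum_{M \gtrsim M_0} M^{s'+\frac dp} \|P_M e^{\imu t |\nabla|} P_{>M_0} f^\om\|_{L^1_t L^p_x}
\end{align*}
for any $p<\infty$, where the sum runs only over $M \gtrsim M_0$ (since $P_M P_{>M_0}$ vanishes otherwise), so that geometric sums $\sum_{M \gtrsim M_0} M^{2\mu}$ with $\mu<0$ converge. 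On the small-time piece I would use H\"older in $t$ over an interval of length $2M^\alpha$ followed by Bernstein, at a cost of $M^{\alpha(1-\frac1{q_1})+\frac d{p_1}}$, and then invoke Proposition~\ref{prop:ProbabilisticEstimateReg} with an admissible pair $(q_1,p_0)$, radial integrability $p_1 \in [p_0,\infty)$, and angular exponent $\mu = p_1$ (so that $\cL^{p_1}_r L^{p_1}_\theta = L^{p_1}_x$). On the large-time piece I would first apply Bernstein in $x$ and then H\"older in $t$ against the weight $\langle t\rangle^{-\sigma}$, which lies in $L^{q_2'}(\{|t|>M^\alpha\})$ with norm $\sim M^{\alpha(1-\frac1{q_2}-\sigma)}$ once $\sigma q_2'>1$, and then invoke Proposition~\ref{prop:ProbabilisticEstimateDecay}; its derivative loss $\frac1{q_2}+\sigma$ is absorbed by running that proposition with Besov exponent $b_2 = s - \frac1{q_2}-\sigma$ in place of $s$, so that the right-hand side becomes $\|f\|_{H^s}$.

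Writing $b_1 = s + \frac1{q_1} + \frac d{p_0} - \frac d2$ for the Besov exponent in Proposition~\ref{prop:ProbabilisticEstimateReg}, these two steps give, pointwise in $\omega$,
\begin{align*}
	\| |\nabla|^{s'} e^{\imu t |\nabla|} P_{>M_0} f^\om \|_{L^1_t L^\infty_x}
	&\lesssim \Big(\sum_{M \gtrsim M_0} M^{2\mu_1}\Big)^{\frac12} \| e^{\imu t |\nabla|} P_{>M_0} f^\om \|_{\dot{B}^{b_1}_{q_1,p_1,2}} \\
	&\quad + \Big(\sum_{M \gtrsim M_0} M^{2\mu_2}\Big)^{\frac12} \| \langle t\rangle^\sigma e^{\imu t |\nabla|} P_{>M_0} f^\om \|_{L^{q_2}_t \dot{B}^{b_2}_{p_2,2}},
\end{align*}
where $\mu_1 = s' + \alpha(1-\frac1{q_1}) + \frac d{p_1} - b_1$ and $\mu_2 = s' + \frac d{p_2} + \alpha(1-\frac1{q_2}-\sigma) - b_2$, and where I used Cauchy--Schwarz in $M$ together with Minkowski's inequality to move the $\ell^2_M$-summation inside $L^{q_2}_t$. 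Taking the $L^\beta_\om$-norm, the first term is bounded by Proposition~\ref{prop:ProbabilisticEstimateReg} (using that $P_{>M_0}$ is a bounded Fourier multiplier commuting with all operators in play and that $P_M P_{>M_0}=0$ for $M$ below a fixed multiple of $M_0>4$, so this reduces to an estimate for $e^{\imu t|\nabla|}P_{>4}f^\om$) and the second term by Proposition~\ref{prop:ProbabilisticEstimateDecay} with parameter $b_2$; both are $\lesssim \sqrt\beta\|f\|_{H^s}$. It thus remains to choose the parameters, subject to the strict conditions $\frac1{q_1}+\frac{d-1}{p_0}<\frac{d-1}2$, $\frac{d-1}2 - \frac1{q_2}-\frac{d-1}{p_2}-\sigma>0$, and $\sigma q_2'>1$, so that $\mu_1<0$ and $\mu_2<0$.

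The crux is this final optimization. Taking $p_1 \to \infty$, $q_1 = 2$, and $p_0$ just above the endpoint forced by the first condition gives $\mu_1 \to s'-s+\frac\alpha2+\frac1{2(d-1)}$, while taking $p_2 \to \infty$, $q_2 = 2$, and $\sigma$ just below its admissible maximum $\frac{d-1}2-\frac1{q_2}-\frac{d-1}{p_2}$ gives $\mu_2 \to s'-s+\frac{d-1}2-\alpha\frac{d-3}2$; the requirement $\sigma q_2'>1$ is compatible with the latter only when $d\geq4$, which is where the dimensional hypothesis enters. The first limit is increasing in $\alpha$ and the second decreasing, so they balance at $\alpha=\frac d{d-1}$, where both equal $s'-s+\frac{d+1}{2(d-1)}$. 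Hence whenever $s-s'>\frac{d+1}{2(d-1)}$ one can fix $\alpha$ slightly perturbed and the other parameters near these optimal values with all admissibility conditions holding strictly and $\mu_1,\mu_2<0$, which completes the proof. The main obstacle is precisely recognizing that the splitting scale must be kept as a free parameter $M^\alpha$ — a more naive choice such as $|t|\sim M$ only reaches $s-s'>1$ — and that the derivative loss of Proposition~\ref{prop:ProbabilisticEstimateDecay} has to be traded for a favourable power of $M$ via the shifted exponent $b_2$; once this is arranged, the individual estimates are routine applications of Bernstein, H\"older, Minkowski, and the two preceding propositions.
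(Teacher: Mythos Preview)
Your argument is correct and reaches the same threshold $s-s'>\tfrac{d+1}{2(d-1)}$, but it is organized quite differently from the paper's proof. The paper does not introduce a frequency-dependent time cutoff at all: instead it first replaces $L^\infty_x$ by a large $L^p_x$ via Sobolev embedding, then interpolates \emph{multiplicatively} at the level of the dyadic Besov sum,
\[
\|g\|_{\dot B^{\tilde s}_{p,2}} \le \|g\|_{\dot B^{\tilde s+a}_{p,2}}^{1-\theta}\,\|g\|_{\dot B^{\tilde s-b}_{p,2}}^{\theta},
\]
and combines this with a single H\"older step in time against $\langle t\rangle^{(1+\delta)/2}$ to write the $L^1_t$ norm as a product of two $L^2_t$ norms; Proposition~\ref{prop:ProbabilisticEstimateReg} is applied to the first factor and Proposition~\ref{prop:ProbabilisticEstimateDecay} (with its built-in time weight) to the second. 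Your route is an \emph{additive} splitting: for each dyadic block you cut time at $|t|\sim M^{\alpha}$, bound the short-time piece by Proposition~\ref{prop:ProbabilisticEstimateReg} and the long-time piece by Proposition~\ref{prop:ProbabilisticEstimateDecay}, then sum in $M$ via Cauchy--Schwarz and optimize over the free scale $\alpha$, finding the balance point $\alpha=\tfrac{d}{d-1}$.

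Both approaches are legitimate. The paper's version is a bit slicker in that no optimization over an auxiliary parameter is needed and the two propositions are combined in one stroke; the dimensional restriction $d\ge 4$ surfaces there through the inequality $\tfrac{d-3}{8}\delta>0$ needed to invoke Proposition~\ref{prop:ProbabilisticEstimateDecay}. Your version is more hands-on and makes the mechanism transparent: the threshold arises visibly as the crossover of the small- and large-time exponents $\mu_1,\mu_2$, and the constraint $d\ge 4$ is pinned exactly to the requirement $\sigma q_2'>1$ with $q_2=2$ and $\sigma$ near $\tfrac{d-2}{2}$. One small presentational point: your first display fixes a single $p$ before the time splitting, but you then use different spatial exponents $p_1,p_2$ on the two pieces; it would be cleaner to stop at $\sum_M M^{s'}\|P_M e^{\imu t|\nabla|}P_{>M_0}f^\omega\|_{L^1_tL^\infty_x}$ and apply Bernstein separately on each piece.
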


\begin{proof}
	We only prove the assertion for $e^{\imu t |\nabla|}$, the proof for $e^{-\imu t |\nabla|}$ works in the same way.
	Take $0 < \delta \ll 1$ such that
	\begin{align}
	\label{eq:Conditiondelta}
		\frac{d+1}{2(d-1)} + s' + \delta + \frac{d^2}{2(d-1)} \delta \leq s
	\end{align}
	and set $\eta = 2 \delta$. We first apply Sobolev's embedding to infer
	\begin{align}
	\label{eq:EstHalfWaveL1LInftyFirstReduction}
		&\| |\nabla|^{s'} e^{\imu t |\nabla|} P_{>M_0} f^\om \|_{L^\beta_\om L^1_t L^\infty_x} \lesssim \| \langle \nabla \rangle^{\delta} |\nabla|^{s'} e^{\imu t |\nabla|} P_{>M_0} f^\om \|_{L^\beta_\om L^1_t L^{\frac{2d}{\delta}}_x} \nonumber \\
		&\lesssim \|  e^{\imu t |\nabla|} P_{>M_0} f^\om \|_{L^\beta_\om L^1_t \dot{B}^{s'}_{\frac{2d}{\delta},2}} + \| e^{\imu t |\nabla|} P_{>M_0} f^\om \|_{L^\beta_\om L^1_t \dot{B}^{s' + \delta}_{\frac{2d}{\delta},2}}.
	\end{align}
	Let $\tilde{s} \in \R$ with $\tilde{s} \leq s' + \delta$. We next note that
	\begin{align*}
		&\|g\|_{\dot{B}^{\tilde{s}}_{\frac{2d}{\delta},2}} = \Big(\sum_{N \in 2^\Z} N^{2\tilde{s}} \|P_N g\|_{L^{\frac{2d}{\delta}}_x}^2 \Big)^{\frac{1}{2}} \\
		&= \Big(\sum_{N \in 2^\Z} (N^{\frac{d}{d-1}(1+\eta) + 2 \tilde{s}} \|P_N g\|_{L^{\frac{2d}{\delta}}_x}^2)^{\frac{d-3-\eta}{d-2}} (N^{-\frac{d(d-3-\eta)}{d-1} + 2 \tilde{s}} \|P_N g\|_{L^{\frac{2d}{\delta}}_x}^2)^{\frac{1+\eta}{d-2}} \Big)^{\frac{1}{2}} \\
		&\leq \Big(\Big(\sum_{N \in 2^\Z} N^{\frac{d}{d-1}(1+\eta) + 2 \tilde{s}} \|P_N g\|_{L^{\frac{2d}{\delta}}_x}^2\Big)^{\frac{d-3-\eta}{d-2}} \Big(\sum_{N \in 2^\Z} N^{-\frac{d(d-3-\eta)}{d-1} + 2 \tilde{s}} \|P_N g\|_{L^{\frac{2d}{\delta}}_x}^2\Big)^{\frac{1+\eta}{d-2}} \Big)^{\frac{1}{2}} \\
		&= \|g\|_{\dot{B}^{\frac{d}{2(d-1)}(1+\eta)+\tilde{s}}_{\frac{2d}{\delta},2}}^{\frac{d-3-\eta}{d-2}} \|g\|_{\dot{B}^{-\frac{d(d-3-\eta)}{2(d-1)} + \tilde{s}}_{\frac{2d}{\delta},2}}^{\frac{1+\eta}{d-2}}.
	\end{align*}
	Combining this inequality with H{\"o}lder's inequality in time, we derive
	\begin{align}
		&\| e^{\imu t |\nabla|} P_{>M_0} f^\om \|_{L^\beta_\om L^1_t \dot{B}^{\tilde{s}}_{\frac{2d}{\delta},2}} \label{eq:EstL1LInftySplit} \\ 
		&\lesssim \Big\| \langle t \rangle^{\frac{1+\delta}{2}}  \|e^{\imu t |\nabla|} P_{>M_0} f^\om\|_{\dot{B}^{\frac{d}{2(d-1)}(1+\eta)+\tilde{s}}_{\frac{2d}{\delta},2}}^{\frac{d-3-\eta}{d-2}} \|e^{\imu t |\nabla|} P_{>M_0} f^\om\|_{\dot{B}^{-\frac{d(d-3-\eta)}{2(d-1)} + \tilde{s}}_{\frac{2d}{\delta},2}}^{\frac{1+\eta}{d-2}} \Big\|_{L^\beta_\om L^2_t} \nonumber\\
		&\lesssim  \|e^{\imu t |\nabla|} P_{>M_0} f^\om\|_{L^\beta_\om L^2_t \dot{B}^{\frac{d}{2(d-1)}(1+\eta)+\tilde{s}}_{\frac{2d}{\delta},2}}^{\frac{d-3-\eta}{d-2}}  \| \langle t \rangle^{\frac{1+\delta}{1+\eta}\frac{d-2}{2}}   e^{\imu t |\nabla|} P_{>M_0} f^\om\|_{L^\beta_\om L^2_t \dot{B}^{-\frac{d(d-3-\eta)}{2(d-1)} + \tilde{s}}_{\frac{2d}{\delta},2}}^{\frac{1+\eta}{d-2}}. \nonumber 
	\end{align}
	Applying Proposition~\ref{prop:ProbabilisticEstimateReg} with $p_0 = \frac{2(d-1)}{(d-2)(1-\delta)}$ and $\mu = p = \frac{2d}{\delta}$, we obtain
	\begin{align}
	\label{eq:EstL1LInftyReg}
		\|e^{\imu t |\nabla|} P_{>M_0} f^\om\|_{L^\beta_\om L^2_t \dot{B}^{\frac{d}{2(d-1)}(1+\eta)+\tilde{s}}_{\frac{2d}{\delta},2}} \lesssim \sqrt{\beta} \|f\|_{H^{\frac{d}{2(d-1)}(1+\eta)+\tilde{s} - \frac{1}{2} - \frac{d}{p_0} + \frac{d}{2}}} \lesssim \sqrt{\beta} \|f\|_{H^s},
	\end{align}
	where we also used that
	\begin{align*}
		&\frac{d}{2(d-1)}(1+\eta)+\tilde{s} - \frac{1}{2} - \frac{d}{p_0} + \frac{d}{2} = \frac{d(1+\eta)}{2(d-1)}+\tilde{s} - \frac{1}{2} - \frac{d(d-2)(1-\delta)}{2(d-1)} + \frac{d}{2} \\
		&= \frac{d-(d-1)-d(d-2)+d(d-1)}{2(d-1)} + \tilde{s} + \frac{d}{2(d-1)}(\eta + (d-2)\delta)\\
		& = \frac{d+1}{2(d-1)} + \tilde{s}  + \frac{d^2}{2(d-1)} \delta \leq \frac{d+1}{2(d-1)} + s' + \delta  + \frac{d^2}{2(d-1)} \delta \leq s
	\end{align*}
	by~\eqref{eq:Conditiondelta}, as $\eta = 2 \delta$ and $\tilde{s} \leq s' + \delta$. On the other hand, since
	\begin{align*}
		&\frac{d-1}{2} - \frac{1}{2} - \frac{d-1}{\frac{2d}{\delta}} - \frac{1+\delta}{1+\eta}\frac{d-2}{2} = \frac{d-2}{2}(1-\frac{1+\delta}{1+\eta}) - \frac{d-1}{2d}\delta \\
		&= \frac{d-2}{2} \frac{\delta}{1+\eta} - \frac{d-1}{2d}\delta \geq \frac{d-2}{4}\delta - \frac{d-1}{8} \delta = \frac{d-3}{8} \delta > 0,
	\end{align*}
	Proposition~\ref{prop:ProbabilisticEstimateDecay} with $\lambda_0 = \lceil \sqrt{d} + 4 \rceil$ yields
	\begin{align}
	\label{eq:EstL1LInftyDec}
		\| \langle t \rangle^{\frac{1+\delta}{1+\eta} \frac{d-2}{2}}   e^{\imu t |\nabla|} P_{>M_0} f^\om\|_{L^\beta_\om L^2_t \dot{B}^{-\frac{d(d-3-\eta)}{2(d-1)} + \tilde{s}}_{\frac{2d}{\delta},2}} 
		&\lesssim \sqrt{\beta} \|f\|_{H^{-\frac{d(d-3-\eta)}{2(d-1)} + \tilde{s} + \frac{1}{2} + \frac{1+\delta}{1+\eta}\frac{d-2}{2}}} \nonumber\\
		&\lesssim \sqrt{\beta} \|f\|_{H^s},
	\end{align}
	where we exploited
	\begin{align*}
		&-\frac{d(d-3-\eta)}{2(d-1)} + \tilde{s} + \frac{1}{2} + \frac{1+\delta}{1+\eta}\frac{d-2}{2} \\
		&=\frac{-d(d-3) + (d-1) + (d-2)(d-1)}{2(d-1)} + \tilde{s} + \frac{d}{2(d-1)} \eta + \frac{\delta - \eta}{1+\eta} \frac{d-2}{2} \\
		&= \frac{d+1}{2(d-1)} + \tilde{s} - \frac{\delta}{1+\eta} \frac{d-2}{2} + \frac{d}{d-1} \delta
		<\frac{d+1}{2(d-1)} + s' + \delta  + \frac{d^2}{2(d-1)} \delta \leq s
	\end{align*}
	by~\eqref{eq:Conditiondelta} in the last step. Inserting~\eqref{eq:EstL1LInftyReg} and~\eqref{eq:EstL1LInftyDec} into~\eqref{eq:EstL1LInftySplit}, we finally arrive at
	\begin{align*}
		\| e^{\imu t |\nabla|} P_{>M_0} f^\om \|_{L^\beta_\om L^1_t \dot{B}^{\tilde{s}}_{\frac{2d}{\delta},2}} \lesssim \sqrt{\beta} \|f\|_{H^s}.
	\end{align*}
	Applying this estimate with $\tilde{s} = s'$ and $\tilde{s} = s' + \delta$, respectively, we obtain the assertion from~\eqref{eq:EstHalfWaveL1LInftyFirstReduction}.
\end{proof}

The $L^1_t L^\infty_x$-estimate for solutions of the half-wave equation immediately transfers to solutions of the wave equation with randomized data. The resulting almost sure $L^1_t L^\infty_x$-estimate is the crucial ingredient in the proof of the almost sure scattering result in Theorem~\ref{thm:AlmostSureScattering}.
\begin{cor}
	\label{cor:L1LinftyEstimateWavePropagator}
	Let $d \geq 4$ and $s > \frac{d+1}{2(d-1)}$. Take $(f_0, f_1) \in H^s(\R^d) \times H^{s-1}(\R^d)$ and let $(f_0^\om, f_1^\om)$ denote the randomizations of $f_0$ and $f_1$ from~\eqref{eq:DefRandomization}, respectively. Fix $M_0 \in 2^\N$ with $M_0 \geq \lceil \sqrt{d} + 4 \rceil + \sqrt{d}$. Then there is a constant $C > 0$ such that
	\begin{align*}
		\|S(t)(P_{>M_0} f_0^\om, P_{>M_0}f_1^\om)\|_{L^\beta_\om L^1_t L^\infty_x} \leq C \sqrt{\beta} \|(f_0, f_1)\|_{H^s \times H^{s-1}}
	\end{align*}
	for all $\beta \in [1,\infty)$.
\end{cor}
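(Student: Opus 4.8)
The plan is to reduce the statement to the half-wave estimate of Proposition~\ref{prop:ProbabilisticL1LinftyEstiamte}. Recalling the definition~\eqref{eq:DefS} of $S(t)$ and the elementary identities $\cos\theta = \frac{1}{2}(e^{\imu\theta} + e^{-\imu\theta})$ and $\sin\theta = \frac{1}{2\imu}(e^{\imu\theta} - e^{-\imu\theta})$, we may write
\begin{align*}
	S(t)(P_{>M_0} f_0^\om, P_{>M_0} f_1^\om)
	&= \tfrac{1}{2}\big(e^{\imu t|\nabla|} + e^{-\imu t|\nabla|}\big) P_{>M_0} f_0^\om \\
	&\qquad + \tfrac{1}{2\imu}\,|\nabla|^{-1}\big(e^{\imu t|\nabla|} - e^{-\imu t|\nabla|}\big) P_{>M_0} f_1^\om,
\end{align*}
where all operators involved are Fourier multipliers and hence commute; note that on the range of $P_{>M_0}$ the multiplier $|\nabla|^{-1}$ is harmless since the frequency support is bounded away from the origin. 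By the triangle inequality in $L^\beta_\om L^1_t L^\infty_x$ it therefore suffices to bound the four terms on the right-hand side individually.

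For the two terms stemming from $f_0$, I would invoke Proposition~\ref{prop:ProbabilisticL1LinftyEstiamte} with $s' = 0$ and $f = f_0 \in H^s(\R^d)$; the hypothesis $s - s' = s > \frac{d+1}{2(d-1)}$ is exactly the assumption of the corollary, and the condition $M_0 \geq \lceil\sqrt d + 4\rceil + \sqrt d$ coincides with the one imposed in the proposition. This yields
\begin{align*}
	\| e^{\pm\imu t|\nabla|} P_{>M_0} f_0^\om \|_{L^\beta_\om L^1_t L^\infty_x} \leq C \sqrt\beta \, \|f_0\|_{H^s}.
\end{align*}
For the two terms stemming from $f_1$, I would apply the same proposition to $f = f_1 \in H^{s-1}(\R^d)$ with $s'=-1$, i.e.\ replacing the regularity parameter $s$ in Proposition~\ref{prop:ProbabilisticL1LinftyEstiamte} by $s-1$; the required inequality then reads $(s-1) - (-1) = s > \frac{d+1}{2(d-1)}$, which is again the hypothesis of the corollary. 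Since $|\nabla|^{-1} = |\nabla|^{s'}$ with $s' = -1$, this gives
\begin{align*}
	\| |\nabla|^{-1} e^{\pm\imu t|\nabla|} P_{>M_0} f_1^\om \|_{L^\beta_\om L^1_t L^\infty_x} \leq C \sqrt\beta \, \|f_1\|_{H^{s-1}}.
\end{align*}

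Summing the four contributions and using $\|f_0\|_{H^s} + \|f_1\|_{H^{s-1}} \lesssim \|(f_0,f_1)\|_{H^s\times H^{s-1}}$ then yields the assertion. There is essentially no obstacle here beyond careful bookkeeping of the regularity indices; the only point worth emphasizing is that the loss of one derivative built into the operator $\frac{\sin(t|\nabla|)}{|\nabla|}$ is precisely compensated by measuring $f_1$ in $H^{s-1}$ rather than $H^s$, which is why the single threshold $s > \frac{d+1}{2(d-1)}$ governs both components of the data.
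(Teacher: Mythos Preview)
Your proof is correct and essentially identical to the paper's own argument: both split $S(t)$ into half-wave propagators via Euler's formula and then apply Proposition~\ref{prop:ProbabilisticL1LinftyEstiamte} with parameters $(s,0)$ for the $f_0$ terms and $(s-1,-1)$ for the $f_1$ terms.
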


\begin{proof}
	The identity
	\begin{align*}
		S(t)(P_{>M_0} f_0^\om, P_{>M_0} f_1^\om) &= \cos(t |\nabla|) P_{>M_0} f_0^\om + \frac{\sin(t |\nabla|)}{|\nabla|} P_{>M_0} f_1^\om \\
		&= \frac{e^{\imu t |\nabla|} + e^{-\imu t |\nabla|}}{2} P_{>M_0} f_0^\om + \frac{e^{\imu t |\nabla|} - e^{-\imu t |\nabla|}}{2 \imu |\nabla|} P_{>M_0} f_1^\om
	\end{align*}
	implies that
	\begin{align*}
		&\|S(t)(P_{>M_0} f_0^\om, P_{>M_0} f_1^\om)\|_{L^\beta_\om L^1_t L^\infty_x}   \\
		&\lesssim \|e^{\imu t |\nabla|} P_{>M_0} f_0^\om\|_{L^\beta_\om L^1_t L^\infty_x} + \|e^{-\imu t |\nabla|} P_{>M_0} f_0^\om\|_{L^\beta_\om L^1_t L^\infty_x} \\
		&\qquad + \||\nabla|^{-1} e^{\imu t |\nabla|} P_{>M_0} f_1^\om\|_{L^\beta_\om L^1_t L^\infty_x} + \||\nabla|^{-1} e^{-\imu t |\nabla|} P_{>M_0} f_1^\om\|_{L^\beta_\om L^1_t L^\infty_x}.
	\end{align*}
	Applying Proposition~\ref{prop:ProbabilisticL1LinftyEstiamte} with regularity parameters $(s, 0)$ to the first two summands on the right-hand side and with regularity parameters $(s-1,-1)$ to the last two summands, the assertion follows.
\end{proof}

\section{Almost sure scattering}
\label{sec:AlmostSureScattering}

In this section we prove the main theorem of this article, i.e. the almost sure scattering result for the defocusing cubic energy-critical nonlinear wave equation with supercritical data. In particular, we only consider dimension $d = 4$ from now on. We follow the general scheme introduced in~\cite{DLM2020}.

The first step consists in the derivation of a local wellposedness theory for the forced equation~\eqref{eq:CubicNLWForced}, which can be done via Strichartz estimates in the same way as for the unforced equation. We state the local wellposedness result in the form of Lemma~3.1 from~\cite{DLM2020}.

\begin{lem}
	\label{lem:NLWLocWP}
	Let $(v_0,v_1) \in \dot{H}^1(\R^4) \times L^2(\R^4)$ and $F \in L^3(\R, L^6(\R^4))$. Then there is a unique solution 
	\begin{align*}
		(v, \partial_t v) \in (C(I_{*}, \dot{H}^1(\R^4)) \cap L^3(I_{*}, L^6(\R^4))) \times C(I_{*}, L^2(\R^4))
	\end{align*}
	of~\eqref{eq:CubicNLWForced} on the maximal interval of existence $I_{*} = (T_{-}, T_+)$.
	
	We have the blow-up criterion that $T_+ < \infty$ implies $\|v\|_{L^3_{[0,T_+)}L^6_x} = \infty$ and analogously for $T_{-}$. Moreover, if $v$ is a global solution of~\eqref{eq:CubicNLWForced} satisfying $\|v\|_{L^3_{\R}L^6_x} < \infty$, then $v$ scatters to free waves as $t \rightarrow \pm \infty$.
\end{lem}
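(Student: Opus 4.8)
The plan is to solve~\eqref{eq:CubicNLWForced} by the standard Strichartz-based fixed-point argument, exactly as in the proof of~\cite[Lemma~3.1]{DLM2020}. The only point to keep in mind is that the forcing $F$ enters the nonlinearity $(v+F)^3$ solely through its $L^3_t L^6_x$-norm and never through the energy space, which is why a supercritical, non-energy-class $F$ causes no difficulty. Recall that $(q,r)=(3,6)$ is an admissible Strichartz pair for the four-dimensional wave equation at regularity $\dot H^1 \times L^2$, so that the solution of $-\partial_t^2 w + \Delta w = G$ with $(w(0),\partial_t w(0))=(w_0,w_1)$ obeys
\begin{align*}
	\|w\|_{L^3_I L^6_x} + \|(w,\partial_t w)\|_{C(I,\dot H^1 \times L^2)} \lesssim \|(w_0,w_1)\|_{\dot H^1 \times L^2} + \|G\|_{L^1_I L^2_x}
\end{align*}
for every interval $I \ni 0$. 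Writing~\eqref{eq:CubicNLWForced} in Duhamel form,
\begin{align*}
	\Phi(v)(t) = S(t)(v_0,v_1) - \int_0^t \frac{\sin((t-\tau)|\nabla|)}{|\nabla|}(v+F)^3(\tau) \dd \tau,
\end{align*}
and using $\|(v+F)^3\|_{L^1_I L^2_x} = \|v+F\|_{L^3_I L^6_x}^3$ together with the analogous estimate for differences, one checks that $\Phi$ maps the ball of radius $2\ep$ in $L^3_I L^6_x$ into itself and is a contraction there, provided $I$ is chosen so small that $\|S(t)(v_0,v_1)\|_{L^3_I L^6_x} \le \ep$ and $\|F\|_{L^3_I L^6_x} \le \ep$ with $\ep$ small; this is possible since $S(t)(v_0,v_1) \in L^3_{\R} L^6_x$ by Strichartz and $F \in L^3_{\R} L^6_x$ by hypothesis, so both norms are absolutely continuous in $I$. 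The fixed point together with the displayed estimate gives the asserted regularity $(v,\partial_t v) \in (C(I,\dot H^1) \cap L^3(I,L^6)) \times C(I,L^2)$, and uniqueness on a common interval follows from the same difference estimate after covering the interval by finitely many subintervals on which the $L^3_t L^6_x$-norms of the two solutions and of $F$ are small, plus a continuation argument.

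Gluing the local solutions yields a solution on a maximal interval $I_* = (T_-,T_+)$. For the blow-up criterion, suppose $T_+ < \infty$ but $\|v\|_{L^3_{[0,T_+)} L^6_x} < \infty$. Since also $\|F\|_{L^3_{[0,T_+)} L^6_x} < \infty$, partition $[0,T_+)$ into finitely many subintervals on each of which $\|v+F\|_{L^3_t L^6_x}$ is small; iterating the Strichartz estimate along them gives $\|(v,\partial_t v)\|_{C([0,T_+),\dot H^1 \times L^2)} < \infty$ and a Cauchy tail estimate shows $(v(t),\partial_t v(t))$ converges in $\dot H^1 \times L^2$ as $t \to T_+^-$. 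Solving~\eqref{eq:CubicNLWForced} from that limit data extends $v$ beyond $T_+$, contradicting maximality; the argument at $T_-$ is identical.

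Finally, if $v$ is global with $\|v\|_{L^3_{\R} L^6_x} < \infty$, then $\|(v+F)^3\|_{L^1_{\R} L^2_x} = \|v+F\|_{L^3_{\R} L^6_x}^3 < \infty$, and the Duhamel formula shows that $S(-t)(v(t),\partial_t v(t))$ is Cauchy in $\dot H^1 \times L^2$ as $t \to +\infty$, the relevant tails being bounded by $\|(v+F)^3\|_{L^1_{[t_1,t_2]} L^2_x} \to 0$. Denoting the limit by $(v_0^+,v_1^+) \in \dot H^1 \times L^2$, one obtains $\|(v(t),\partial_t v(t)) - S(t)(v_0^+,v_1^+)\|_{\dot H^1 \times L^2} \to 0$, and the backward statement is analogous. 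There is no genuine obstacle in this proof — it is the standard energy-critical local theory — and the only step requiring care is the one already highlighted: the forcing term is estimated exclusively in $L^3_t L^6_x$, so it integrates seamlessly into the cubic nonlinearity.
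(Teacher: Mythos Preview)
Your proof is correct and follows exactly the standard Strichartz-based contraction argument that the paper invokes: the paper does not give its own proof here but simply cites \cite[Lemma~3.1]{DLM2020}, remarking that the local theory for the forced equation ``can be done via Strichartz estimates in the same way as for the unforced equation.'' Your write-up is precisely that argument, including the key observation that $F$ enters only through its $L^3_t L^6_x$-norm.
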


Combined with the global wellposedness and scattering theory of~\eqref{eq:CubicNLW} at energy regularity, one then derives the following conditional scattering result, see Theorem~1.3 in~\cite{DLM2020}.

\begin{prop}
	\label{prop:CondScattering}
	Let $(v_0, v_1) \in \dot{H}^1(\R^4) \times L^2(\R^4)$ and $F \in L^3(\R, L^6(\R^4))$. Let $v$ be the unique solution of~\eqref{eq:CubicNLWForced} on the maximal interval of existence $(T_{-}, T_+)$. Assume that
	\begin{align*}
		\sup_{t \in (T_{-}, T_+)} E(v(t)) < \infty.
	\end{align*}
	Then $(T_{-}, T_+) = \R$, i.e. $v$ exists for all times, and $v$ satisfies $\|v\|_{L^3_t L^6_x} < \infty$. Consequently, the solution $v$ scatters both forward and backward in time, i.e. there are $(v_0^\pm, v_1^\pm) \in \dot{H}^1(\R^4) \times L^2(\R^4)$ such that
	\begin{align*}
		\lim_{t \rightarrow \pm \infty} \|(v(t), \partial_t v(t)) - (S(t)(v_0^\pm, v_1^\pm), \partial_t S(t)(v_0^\pm, v_1^\pm)) \|_{\dot{H}^1 \times L^2} = 0.
	\end{align*}
\end{prop}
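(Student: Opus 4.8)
The plan is to glue together the local theory of Lemma~\ref{lem:NLWLocWP}, the deterministic global wellposedness and scattering of $-\partial_t^2 u + \Delta u = u^3$ at energy regularity recalled in the introduction, and the long-time perturbation (stability) theory for this equation. Set $A := \sup_{t \in (T_-,T_+)} E(v(t)) < \infty$ and let $L := K(A)$, where $K$ is the non-decreasing function bounding the $L^3_t L^6_x$-norm of an energy-critical solution by its energy. By the time-reversal symmetry of the equation it suffices to prove $T_+ = \infty$ and $\|v\|_{L^3_{[0,T_+)} L^6_x} < \infty$; the scattering statement, together with the radiation data $(v_0^\pm, v_1^\pm)$, is then furnished by the last assertion of Lemma~\ref{lem:NLWLocWP}. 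Note that $\dot H^1(\R^4) \hookrightarrow L^4(\R^4)$ guarantees that $E(v(t))$ is finite for every $t$ in the interval of existence.

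First I would record the key nonlinear estimate controlling the forcing error. Writing $(v+F)^3 = v^3 + e$ with $e = 3 v^2 F + 3 v F^2 + F^3$, Hölder's inequality ($\tfrac12 = \tfrac16 + \tfrac16 + \tfrac16$ in space, $1 = \tfrac23 + \tfrac13$ in time) gives, for any interval $I$,
\[
\|e\|_{L^1_t L^2_x(I \times \R^4)} \lesssim \big( \|v\|_{L^3_t L^6_x(I)} + \|F\|_{L^3_t L^6_x(I)} \big)^2 \, \|F\|_{L^3_t L^6_x(I)},
\]
so $e$ is small in the dual Strichartz space $L^1_t L^2_x$ as soon as $\|F\|_{L^3_t L^6_x(I)}$ is small and $\|v\|_{L^3_t L^6_x(I)}$ is under control.

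Since $F \in L^3(\R, L^6(\R^4))$, I would fix, for a threshold $\delta = \delta(L) > 0$ chosen below, a finite partition of $[0,\infty)$ into intervals $I_1, \dots, I_J$ (with $I_J$ unbounded) such that $\|F\|_{L^3_{I_k} L^6_x} \le \delta$ on each, and similarly on $(-\infty,0]$. I would then prove by induction on $k$ that $v$ extends to all of $I_k$ and satisfies $\|v\|_{L^3_{I_k} L^6_x} \le 2L$. For the inductive step, $v(t_{k-1})$ exists with $E(v(t_{k-1})) \le A$; let $w_k$ be the global energy-critical solution of $-\partial_t^2 w_k + \Delta w_k = w_k^3$ with Cauchy data $(v(t_{k-1}), \partial_t v(t_{k-1}))$, so $\|w_k\|_{L^3_\R L^6_x} \le K(A) = L$ and $(v-w_k)(t_{k-1}) = 0$. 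A continuity argument in the right endpoint $\tau$ of the set $\{\tau \in I_k : \|v\|_{L^3_{[t_{k-1},\tau]} L^6_x} \le 2L\}$ closes the bootstrap: on such an interval the displayed estimate yields $\|e\|_{L^1_t L^2_x} \lesssim (2L+\delta)^2 \delta \le \epsilon_0(L)$ for $\delta$ small enough, where $\epsilon_0(L)$ is the smallness threshold of the perturbation lemma for comparison solutions of $L^3_t L^6_x$-norm at most $L$; since the initial difference vanishes, the perturbation lemma makes $\|v - w_k\|_{L^3_t L^6_x}$ as small as we wish, whence $\|v\|_{L^3_{[t_{k-1},\tau]} L^6_x} \le L + o(1) < 2L$, a strict improvement. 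Combined with the blow-up criterion of Lemma~\ref{lem:NLWLocWP}, this shows the set is open, closed and nonempty in the connected interval $I_k$, hence equals $I_k$. Iterating over the finitely many $I_k$ gives $T_+ = \infty$ and $\|v\|_{L^3_{[0,\infty)} L^6_x} \le 2LJ < \infty$; the analogous argument on $(-\infty,0]$ yields $\|v\|_{L^3_\R L^6_x} < \infty$, and the scattering conclusion, including the existence of $(v_0^\pm, v_1^\pm)$, follows from Lemma~\ref{lem:NLWLocWP}.

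The main obstacle is the circular dependence between bounding $\|v\|_{L^3_t L^6_x}$ on each piece and making the forcing error $e$ small enough to run the stability theory; this is precisely what the continuity/bootstrap argument together with the long-time perturbation theory for the energy-critical NLW resolves. It is essential here that the smallness threshold $\epsilon_0$ — and hence $\delta$, and hence the number $J$ of intervals in the partition of $F$ — depends only on $A$ through $L = K(A)$ and not on the interval, so that finitely many steps suffice. A minor but necessary check is that $E(w_k(t_{k-1})) = E(v(t_{k-1})) \le A$, so the deterministic global bound with constant $K(A)$ genuinely applies to every comparison solution $w_k$.
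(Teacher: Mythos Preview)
Your proposal is correct and follows the standard route. Note, however, that the paper does not actually give its own proof of this proposition: it simply cites Theorem~1.3 of~\cite{DLM2020} and states the result. The argument you outline---partition the time axis into finitely many intervals on which $\|F\|_{L^3_t L^6_x}$ is small, run a bootstrap on each interval comparing $v$ with the deterministic energy-critical solution $w_k$ launched from $(v(t_{k-1}),\partial_t v(t_{k-1}))$, and close via the long-time perturbation lemma with error $e=(v+F)^3-v^3$ controlled in $L^1_t L^2_x$---is precisely the scheme of~\cite{DLM2020}, so there is nothing to compare.
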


This conditional scattering result reduces the question of scattering to an a priori energy bound for solutions of~\eqref{eq:CubicNLWForced}. If the forcing belongs to $L^3_t L^6_x \cap L^1_t L^\infty_x$, we can bound the energy increment in terms of the energy, implying such an a priori bound.

\begin{thm}
	\label{thm:ScatteringForcedEquation}
	Let $(v_0, v_1) \in \dot{H}^1(\R^4) \times L^2(\R^4)$ and
	\begin{align*}
		F \in L^3(\R, L^6(\R^4)) \cap L^1(\R, L^\infty(\R^4)).
	\end{align*}
	Let $v$ be the unique solution of~\eqref{eq:CubicNLWForced} on its maximal interval of existence $(T_{-}, T_+)$. Then
	\begin{align*}
		\sup_{t \in (T_{-}, T_+)} E(v(t)) < \infty.
	\end{align*}
	In particular, $(T_{-}, T_+) = \infty$ and $v$ scatters both forward and backward in time.
\end{thm}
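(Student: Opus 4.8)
The plan is to derive an a priori bound on the energy increment of $v$ and close it with Gronwall's inequality, after which the ``in particular'' part follows immediately from Proposition~\ref{prop:CondScattering}. By Lemma~\ref{lem:NLWLocWP} the solution $v$ exists on a maximal interval $(T_-,T_+)$ with $v\in C((T_-,T_+),\dot H^1(\R^4))$ and $\partial_t v\in C((T_-,T_+),L^2(\R^4))$; since $\dot H^1(\R^4)\hookrightarrow L^4(\R^4)$, the function $t\mapsto E(v(t))$ is finite and continuous. Differentiating the energy and using equation~\eqref{eq:CubicNLWForced} one computes, at least formally,
\begin{equation*}
	\frac{\dd}{\dd t} E(v(t)) = -\int_{\R^4} \partial_t v\,\big((v+F)^3 - v^3\big)\dd x
	= -\int_{\R^4} \partial_t v\,\big(3v^2 F + 3 v F^2 + F^3\big)\dd x .
\end{equation*}
This identity is classical for smooth solutions and extends to the solutions provided by Lemma~\ref{lem:NLWLocWP} by a standard approximation argument (regularize $(v_0,v_1)$ and $F$, use the energy identity for the smooth solutions, and pass to the limit with the perturbation theory for~\eqref{eq:CubicNLWForced}), or one may work throughout with regularized solutions. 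This is precisely the scheme of~\cite{BT2014}, see also~\cite[Remark~1.8]{DLM2020}.

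It then remains to estimate the three terms on the right-hand side. Abbreviating $E = E(v(t))$, we have $\|\partial_t v(t)\|_{L^2}^2\le 2E$ and $\|v(t)\|_{L^4}^4\le 4E$ \emph{directly from the definition of the energy} --- it is important to use the potential-energy term here rather than Sobolev's inequality, so that the first term below remains linear in $E$. Then, by Hölder's inequality,
\begin{equation*}
	\Big|\int_{\R^4}\partial_t v\, v^2 F\dd x\Big| \le \|\partial_t v\|_{L^2}\|v\|_{L^4}^2\|F\|_{L^\infty} \lesssim \|F(t)\|_{L^\infty_x}\,(1+E),
\end{equation*}
\begin{equation*}
	\Big|\int_{\R^4}\partial_t v\, F^3 \dd x\Big| \le \|\partial_t v\|_{L^2}\|F\|_{L^6}^3 \lesssim \|F(t)\|_{L^6_x}^3\,(1+E),
\end{equation*}
while the middle term is handled by the elementary bound $|v|F^2\le \tfrac12(v^2|F|+|F|^3)$ (AM--GM), which reduces it to the previous two. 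Altogether,
\begin{equation*}
	\Big|\frac{\dd}{\dd t}E(v(t))\Big| \le C\,\big(1+E(v(t))\big)\,h(t), \qquad h(t):=\|F(t)\|_{L^\infty_x}+\|F(t)\|_{L^6_x}^3,
\end{equation*}
and $h\in L^1(\R)$ since $F\in L^1(\R,L^\infty(\R^4))\cap L^3(\R,L^6(\R^4))$.

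Finally, since $E(v(0))=E(v_0,v_1)<\infty$ and $t\mapsto E(v(t))$ is continuous, Gronwall's inequality applied separately on $[0,T_+)$ and on $(T_-,0]$ yields
\begin{equation*}
	\sup_{t\in(T_-,T_+)} E(v(t)) \le \big(1+E(v_0,v_1)\big)\,\exp\big(C\|h\|_{L^1(\R)}\big) < \infty .
\end{equation*}
By Proposition~\ref{prop:CondScattering} this forces $(T_-,T_+)=\R$, $\|v\|_{L^3_t L^6_x}<\infty$, and scattering of $v$ both forward and backward in time. The only genuinely delicate point is the rigorous justification of the energy identity for the low-regularity solutions of Lemma~\ref{lem:NLWLocWP}; the rest is routine algebra. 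I note that the term $\int \partial_t v\,v^2 F$ is the one that forces the use of $\|F\|_{L^\infty_x}$, and hence of the hypothesis $F\in L^1_tL^\infty_x$ --- which is exactly why the probabilistic $L^1_tL^\infty_x$-estimate established in the previous section is needed.
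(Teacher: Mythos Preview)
Your proof is correct and follows essentially the same approach as the paper: differentiate the energy, bound $(v+F)^3-v^3$ pointwise by $|v|^2|F|+|F|^3$ (your AM--GM step on the cross term $vF^2$ is exactly how the paper absorbs it), and control the result using $\|\partial_t v\|_{L^2}$ and $\|v\|_{L^4}$ from the energy together with $\|F\|_{L^1_tL^\infty_x}$ and $\|F\|_{L^3_tL^6_x}$. The only cosmetic difference is in how the resulting inequality is closed: you invoke Gronwall directly, while the paper partitions the time axis into finitely many subintervals on which $\|F\|_{L^1_tL^\infty_x}+\|F\|_{L^3_tL^6_x}^3$ is small and absorbs iteratively; both are standard and equivalent here.
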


\begin{proof}
	Differentiating the energy of $v$ and integrating by parts, we infer
	\begin{equation}
		\label{eq:TimeDerEnergy}
		\partial_t E(v(t)) = \int_{\R^4} (v^3(t) - (v + F)^3(t)) \partial_t v(t) \dd x.
	\end{equation}
		Set
		\begin{align*}
			A_{T_0}(T) = 1 + \sup_{t \in (T_0,T)}E(v(t))
		\end{align*}
		for all $T_0, T \in (T_{-}, T_+)$ with $T > T_0$. Fixing such $T_0$ and $T$ for a moment, we get
		\begin{equation}
		\label{eq:EstAT0T1}
			A_{T_0}(T) \leq 1 + E(v(T_0)) + \int_{T_0}^T |\partial_t E(v(t))| \dd t.
		\end{equation}
		Since
		\begin{align*}
			|v^3 - (v + F)^3| \lesssim |v|^2 |F| + |F|^3,
\end{align*}				
		we obtain from~\eqref{eq:TimeDerEnergy}
		\begin{align*}
			\|\partial_t E(v(t))\|_{L^1_t L^1_x} &\lesssim \| v^3 - (v + F)^3\|_{L^1_t L^2_x} \|\partial_t v\|_{L^\infty_t L^2_x} \\
			&\lesssim (\| v^2 F \|_{L^1_t L^2_x} + \|F^3\|_{L^1_t L^2_x}) \sup_{t \in (T_0,T)} E(v(t))^{\frac{1}{2}} \\
			 &\lesssim (\|v\|_{L^\infty_t L^4_x}^2 \|F\|_{L^1_t L^\infty_x} + \|F\|_{L^3_t L^6_x}^3) \sup_{t \in (T_0,T)} E(v(t))^{\frac{1}{2}} \\
			 &\lesssim \|F\|_{L^1_t L^\infty_x} \sup_{t \in (T_0,T)}E(v(t)) +  \|F\|_{L^3_t L^6_x}^3 \sup_{t \in (T_0,T)} E(v(t))^{\frac{1}{2}} \\
			 &\lesssim (\|F\|_{L^1_t L^\infty_x} + \|F\|_{L^3_t L^6_x}^3) A_{T_0}(T),
		\end{align*}
		where all the space-time norms are taken over $(T_0,T) \times \R^4$. Inserting this estimate into~\eqref{eq:EstAT0T1}, we get
		\begin{equation}
			\label{eq:EstAT0T2}
			A_{T_0}(T) \leq 1 + E(v(T_0)) + C (\|F\|_{L^1_t L^\infty_x} + \|F\|_{L^3_t L^6_x}^3) A_{T_0}(T)
		\end{equation}
		with a constant $C > 0$ independent of $T_0$ and $T$. Fix $\delta > 0$ such that $C \delta < \frac{1}{2}$. The assumptions and the dominated convergence theorem yield times $0 = t_0 < t_1 < \ldots < t_n < t_{n+1} = T_+$ such that
		\begin{align*}
			\|F\|_{L^1_{I_j} L^\infty_x} + \|F\|_{L^3_{I_j} L^6_x}^3 \leq \delta
		\end{align*}
		for all $j \in \{0, 1, \ldots, n\}$, where $I_j = (t_{j}, t_{j+1})$. Applying estimate~\eqref{eq:EstAT0T2} on the intervals $I_j$, we thus deduce
		\begin{align*}
			A_{t_{j}}(t_{j+1}) \leq 1 + E(v(t_j)) + C \delta A_{t_{j}}(t_{j+1}), \qquad
			A_{t_{n}}(T) \leq 1 + E(v(t_n)) + C \delta A_{t_{n}}(T)
		\end{align*}
		and thus
		\begin{align*}
			A_{t_{j}}(t_{j+1}) \leq 2 + 2 E(v(t_j)), \qquad A_{t_{n}}(T) \leq 2 + 2 E(v(t_n))
		\end{align*}
		for all $j \in \{0, 1, \ldots, n-1\}$ and $T \in (t_n, T_+)$ as $C \delta < \frac{1}{2}$. Letting $T \rightarrow T_+$, we also get
		\begin{align*}
			A_{t_{n}}(t_{n+1}) = A_{t_n}(T_+) \leq 2 + 2 E(v(t_n)).
		\end{align*}
		Using $E(v(t_j)) \leq A_{t_{j-1}}(t_j)$ for all $j \in \{1, \ldots, n+1\}$, we derive inductively that
		\begin{align*}
			A_{t_{j}}(t_{j+1}) \leq 2^{j+2} - 2 + 2^{j+1} E(v(0))
		\end{align*}
		for all $j \in \{0, 1, \ldots, n\}$. Consequently,
		\begin{align*}
			\sup_{t \in (0, T_+)} E(v(t)) &< \max\{A_{t_{j}}(t_{j+1}) \colon j \in \{0,1, \ldots, n\}\} \\
				&= 2^{n+2} - 2 + 2^{n+1} E(v(0)) < \infty.
		\end{align*}
		Arguing analogously on $(T_{-}, 0)$, we arrive at
		\begin{align*}
			\sup_{t \in (T_{-}, T_+)} E(v(t)) < \infty.
		\end{align*}
		Proposition~\ref{prop:CondScattering} now implies that $v$ exists globally and scatters both forward and backward in time.
\end{proof}

We point out that in the application to the almost sure scattering result for~\eqref{eq:CubicNLWRand}, the forcing is the high frequency part of the solution of the linear wave equation with rough and randomized data. Theorem~\ref{thm:ScatteringForcedEquation} with its strong assumption of $F \in L^1_t L^\infty_x$ is thus only sufficient for our purposes because we have a corresponding almost sure $L^1_t L^\infty_x$-bound for the linear flow of the wave equation with randomized data.

The combination of Theorem~\ref{thm:ScatteringForcedEquation} and Corollary~\ref{cor:L1LinftyEstimateWavePropagator} now immediately implies Theorem~\ref{thm:AlmostSureScattering}.

\emph{Proof of Theorem~\ref{thm:AlmostSureScattering}:} We first recall that $u$ is a solution of the cubic NLW~\eqref{eq:CubicNLW} with initial data $(f_0^\om, f_1^\om)$ if and only if 
\begin{align*}
	v(t) = u(t) - S(t)(P_{>8} f_0^\om, P_{>8} f_1^\om)
\end{align*}
is a solution of the forced equation~\eqref{eq:CubicNLWForced} with initial data $(P_{\leq 8} f_0^\om, P_{\leq 8} f_1^\om)$ and forcing $F(t) = S(t)(P_{>8} f_0^\om, P_{>8} f_1^\om)$.

Fix $0 < \delta \ll 1$. Applying Proposition~\ref{prop:ProbabilisticEstimateReg} with $q = 3$, $p_0 = \frac{18}{7-\delta}$, $p = \mu = 6$, and $s = \frac{1}{9} + \frac{2}{9} \delta$ respectively $s = -\frac{8}{9} + \frac{2}{9} \delta$, we obtain
\begin{align*}
	\|S(t)(P_{>8}f_0^\om, P_{>8} f_1^\om)\|_{L_\om^\beta L^3_t L^6_x} \lesssim \sqrt{\beta} \|(f_0, f_1)\|_{H^{\frac{1}{9} + \frac{2}{9}\delta} \times H^{-\frac{8}{9} + \frac{2}{9}\delta}} \lesssim \sqrt{\beta} \|f\|_{H^s \times H^{s-1}}.
\end{align*}
We now set $\lambda_0 = 6$ and $M_0 = 8$. Corollary~\ref{cor:L1LinftyEstimateWavePropagator} then implies that
\begin{align*}
	\|S(t)(P_{>8}f_0^\om, P_{>8} f_1^\om)\|_{L^\beta_\om L^1_t L^\infty_x} \lesssim \sqrt{\beta} \|f\|_{H^s \times H^{s-1}}
\end{align*}
for all $\beta \geq 1$ as $\frac{d+1}{2(d-1)} = \frac{5}{6}$. Consequently,
\begin{align*}
	S(t)(P_{>8}f_0^\om, P_{>8} f_1^\om) \in L^3(\R, L^6(\R^4)) \cap L^1(\R, L^\infty(\R^4))
\end{align*}
for almost all $\om \in \Omega$. Since $(P_{\leq 8} f_0^\om, P_{\leq 8} f_1^\om)$ belongs to $\dot{H}^1(\R^4) \times L^2(\R^4)$ for almost all $\om \in \Omega$, the assertion of Theorem~\ref{thm:AlmostSureScattering} now follows from Theorem~\ref{thm:ScatteringForcedEquation} and Proposition~\ref{prop:CondScattering}. \hfill \qed

\vspace{1em}

\textbf{Acknowledgment.} I want to thank Sebastian Herr for valuable discussions and helpful feedback on this project. Funded by the Deutsche Forschungsgemeinschaft (DFG, German Research Foundation) – SFB 1283/2 2021 – 317210226.

\bibliographystyle{abbrv}
\bibliography{SupercriticalCubicNLW} 

\begin{thebibliography}{10}

\bibitem{BG1999}
H.~Bahouri and P.~G{é}rard.
\newblock High frequency approximation of solutions to critical nonlinear wave
  equations.
\newblock {\em Amer. J. Math.}, 121:131--175, 1999.

\bibitem{BG1998}
H.~Bahouri and J.~Shatah.
\newblock Decay estimates for the critical semilinear wave equation.
\newblock {\em Ann. Inst. H. Poincar{é} Anal. Non Lin{é}aire}, 15:783--789,
  1998.

\bibitem{BH2017}
I.~Bejenaru and S.~Herr.
\newblock On global well-posedness and scattering for the massive
  {D}irac-{K}lein-{G}ordon system.
\newblock {\em J. Eur. Math. Soc. (JEMS)}, 19:2445--2467, 2017.

\bibitem{BOP2015I}
{\'A}.~B{\'e}nyi, T.~Oh, and O.~Pocovnicu.
\newblock On the probabilistic {C}auchy theory of the cubic nonlinear
  {S}chr{\"o}dinger equation on $\mathbb{R}^d$, $d \geq 3$.
\newblock {\em Trans. Amer. Math. Soc. Ser. B}, 2:1--50, 2015.

\bibitem{BOP2015II}
{\'A}.~B{\'e}nyi, T.~Oh, and O.~Pocovnicu.
\newblock Wiener randomization on unbounded domains and an application to
  almost sure well-posedness of {NLS}.
\newblock In {\em Excurions in harmonic analysis}, volume~4 of {\em Appl.
  Numer. Harmon. Anal.}, pages 3--25. Birkh{\"a}user/Springer, Cham, 2015.

\bibitem{B1994}
J.~Bourgain.
\newblock Periodic nonlinear {S}chr{\"o}dinger equation and invariant measures.
\newblock {\em Comm. Math. Phys.}, 166:1--26, 1994.

\bibitem{B1996}
J.~Bourgain.
\newblock Invariant measures for the {2D}-defocusing nonlinear
  {S}chr{\"o}dinger equation.
\newblock {\em Comm. Math. Phys.}, 176:421--445, 1996.

\bibitem{B2019}
J.~Brereton.
\newblock Almost sure local well-posedness for the supercritical quintic {NLS}.
\newblock {\em Tunis. J. Math.}, 1:427--453, 2019.

\bibitem{Br2020}
B.~Bringmann.
\newblock Almost-sure scattering for the radial energy-critical nonlinear wave
  equation in three dimensions.
\newblock {\em Anal. PDE}, 13:1011--1050, 2020.

\bibitem{Br2021}
B.~Bringmann.
\newblock Almost sure scattering for the energy critical nonlinear wave
  equation.
\newblock {\em Amer. J. Math.}, 143:1931--1982, 2021.

\bibitem{BK2021}
N.~Burq and J.~Krieger.
\newblock Randomization improved {S}trichartz estimates and global
  well-posedness for supercritical data.
\newblock {\em Ann. Inst. Fourier}, 2021.
\newblock DOI: 10.5802/aif.3448.

\bibitem{BL2013}
N.~Burq and G.~Lebeau.
\newblock Injections de {S}obolev probabilistes et applications.
\newblock {\em Ann. Sci. {\'E}c. Norm. Sup{\'e}r. (4)}, 46:917--962, 2013.

\bibitem{BL2014}
N.~Burq and G.~Lebeau.
\newblock Probabilistic {S}obolev embeddings, applications to eigenfunctions
  estimates.
\newblock In {\em Geometric and spectral analysis}, volume 630 of {\em Contemp.
  Math.}, pages 307--318. Amer. Math. Soc., Providence, RI, 2014.

\bibitem{BT2008I}
N.~Burq and N.~Tzvetkov.
\newblock Random data {C}auchy theory for supercritical wave equations {I}:
  local theory.
\newblock {\em Invent. Math.}, 173:449--475, 2008.

\bibitem{BT2008II}
N.~Burq and N.~Tzvetkov.
\newblock Random data {C}auchy theory for supercritical wave equations {II}: a
  global existence result.
\newblock {\em Invent. Math.}, 173:477--496, 2008.

\bibitem{BT2014}
N.~Burq and N.~Tzvetkov.
\newblock Probabilistic well-psedness for the cubic wave equation.
\newblock {\em J. Eur. Math. Soc. (JEMS)}, 16:1--30, 2014.

\bibitem{C2021}
N.~Camps.
\newblock Scattering for the cubic {S}chr{\"o}dinger equation in {3D} with
  randomized radial initial data.
\newblock Arxiv preprint, \texttt{arXiv:2110.10752}.

\bibitem{CCT2003}
M.~Christ, J.~Colliander, and T.~Tao.
\newblock Ill-posedness for nonlinear {S}chr{\"o}dinger and wave equations.
\newblock \texttt{arXiv:math/0311048}.

\bibitem{dS2013}
A.-S. de~Suzzoni.
\newblock Large data low regularity scattering results for the wave equation on
  the {E}uclidean space.
\newblock {\em Comm. Partial Differential Equations}, 38(1):1--49, 2013.

\bibitem{D2012}
Y.~Deng.
\newblock Two-dimensional nonlinear {S}chr{\"o}dinger equation with random
  radial data.
\newblock {\em Anal. PDE}, 5(5):913--960, 2012.

\bibitem{DLM2019}
B.~Dodson, J.~Lührmann, and D.~Mendelson.
\newblock Almost sure local well-posedness and scattering for the {4D} cubic
  nonlinear {S}chrödinger equation.
\newblock {\em Adv. Math.}, 347:619--676, 2019.

\bibitem{DLM2020}
B.~Dodson, J.~L{\"u}hrmann, and D.~Mendelson.
\newblock Almost sure scattering for the {4D} energy-critical defocusing
  nonlinear wave equation with radial data.
\newblock {\em Amer. J. Math.}, 142:475--504, 2020.

\bibitem{GSV1992}
J.~Ginibre, A.~Soffer, and G.~Velo.
\newblock The global {C}auchy problem for the critical nonlinear wave equation.
\newblock {\em J. Funct. Anal.}, 110:96--130, 1992.

\bibitem{Gr1990}
M.~G. Grillakis.
\newblock Regularity and asymptotic behaviour of the wave equation with a
  critical nonlinearity.
\newblock {\em Ann. of Math. (2)}, 132:485--509, 1990.

\bibitem{KMV2019}
R.~Killip, J.~Murphy, and M.~Visan.
\newblock Almost sure scattering for the energy-critical {NLS} with radial data
  below ${H}^1(\mathbb{R}^4)$.
\newblock {\em Comm. Partial Differential Equations}, 44(1):51--71, 2019.

\bibitem{KlTa1999}
S.~Klainerman and D.~Tataru.
\newblock On the optimal local regularity for the {Y}ang-{M}ills equations in
  {$\R^{4+1}$}.
\newblock {\em J. Amer. Math. Soc.}, 12(1):93--116, 1999.

\bibitem{LS1995}
H.~Lindblad and C.~Sogge.
\newblock On existence and scattering with minimal regularity for semilinear
  wave equations.
\newblock {\em J. Funct. Anal.}, 130:357--426, 1995.

\bibitem{LM2014}
J.~L{\"u}hrmann and D.~Mendelson.
\newblock Random data {C}auchy theory for nonlinear wave equations of
  power-type on {$\R^3$}.
\newblock {\em Comm. Partial Differential Equations}, 39:2262--2283, 2014.

\bibitem{M2019}
J.~Murphy.
\newblock Random data final-state problem for the mass-subcritical {NLS} in
  {$L^2$}.
\newblock {\em Proc. Amer. Math. Soc.}, 147:339--350, 2019.

\bibitem{Na1999}
K.~Nakanishi.
\newblock Unique global existence and asymptotic behaviour of solutions for
  wave equations with non-coercive critical nonlinearity.
\newblock {\em Comm. Partial Differential Equations}, 24:185--221, 1999.

\bibitem{NY2019}
K.~Nakanishi and T.~Yamamoto.
\newblock Randomized final-data problem for systems of nonlinear
  {S}chr{\"o}dinger equations and the {G}ross-{P}itaevskii equation.
\newblock {\em Math. Res. Lett.}, 26:253--279, 2019.

\bibitem{OOP2019}
T.~Oh, M.~Okamoto, and O.~Pocovnicu.
\newblock On the probabilistic well-posedness of the nonlinear
  {S}chr{\"o}dinger equations with non-algebraic nonlinearities.
\newblock {\em Discrete Contin. Dyn. Syst.}, 39:3479--3520, 2019.

\bibitem{OP2016}
T.~Oh and O.~Pocovnicu.
\newblock Probabilistic global well-posedness of the energy-critical defocusing
  quintic nonlinear wave equation on {$\R^3$}.
\newblock {\em J. Math. Pures Appl.}, 105:342--366, 2016.

\bibitem{P2017}
O.~Pocovnicu.
\newblock Almost sure global well-posedness for the energy-critical defocusing
  nonlinear wave equation on {$\R^d$}, $d = 4$ and $5$.
\newblock {\em J. Eur. Math. Soc. (JEMS)}, 19:2521--2575, 2017.

\bibitem{PRT2014}
A.~Poiret, D.~Robert, and L.~Thomann.
\newblock Probabilistic global well-posedness for the supercritical nonlinear
  harmonic oscillator.
\newblock {\em Anal. PDE}, 7:997--1026, 2014.

\bibitem{SS1993}
J.~Shatah and M.~Struwe.
\newblock Regularity results for nonlinear wave equations.
\newblock {\em Ann. of Math. (2)}, 138:503--518, 1993.

\bibitem{SS1994}
J.~Shatah and M.~Struwe.
\newblock Well-posedness in the energy space for semilinear wave equations with
  critical growth.
\newblock {\em Int. Math. Res. Not.}, 1994:303--309, 1994.

\bibitem{SSW2021II}
J.~Shen, A.~Soffer, and Y.~Wu.
\newblock Almost sure scattering for the nonradial energy-critical {NLS} with
  arbitrary regularity in {3D} and {4D} cases.
\newblock Arxiv preprint, \texttt{arXiv:2111.11935}.

\bibitem{SSW2021}
J.~Shen, A.~Soffer, and Y.~Wu.
\newblock Almost sure well-posedness and scattering of the {3D} cubic nonlinear
  {S}chr{\"o}dinger equation.
\newblock Arxiv preprint, \texttt{arXiv:2110.11648}.

\bibitem{SSchr2021}
M.~Spitz.
\newblock Almost sure local wellposedness and scattering for the
  energy-critical cubic nonlinear {S}chr{\"o}dinger equation with supercritical
  data.
\newblock Arxiv preprint, \texttt{arXiv:2110.11051}.

\bibitem{S2020}
M.~Spitz.
\newblock Randomized final-state problem for the {Z}akharov system in dimension
  three.
\newblock {\em Comm. Partial Differential Equations}, 47:346--377, 2022.

\bibitem{SW71}
E.~M. Stein and G.~Weiss.
\newblock {\em {I}ntroduction to {F}ourier {A}nalysis on {E}uclidean {S}paces}.
\newblock Princeton University Press, Princeton, New Jersey, 1971.

\bibitem{Sterbenz2005}
J.~Sterbenz.
\newblock Angular regularity and {S}trichartz estimates for the wave equation.
\newblock {\em Int. Math. Res. Not.}, 2005(4):187--231, 2005.
\newblock With an appendix by Igor Rodnianski.

\bibitem{Str1988}
M.~Struwe.
\newblock Globally regular solutions to the {$u^5$} {K}lein-{G}ordon equation.
\newblock {\em Ann. Scuola Norm. Sup. Pisa Cl. Sci. (4)}, 15:495--513, 1988.

\bibitem{Tao2006}
T.~Tao.
\newblock Spacetime bounds for the energy-critical nonlinear wave equation in
  three spatial dimensions.
\newblock {\em Dyn. Partial Differ. Equ.}, 3:93--110, 2006.

\end{thebibliography}
 
\end{document}